\documentclass[reqno]{amsart}
\usepackage{amsmath,amsthm,amssymb,enumitem,times,xcolor,bm,esint,mathtools,dsfont}
\usepackage{leftidx}
\usepackage[mathscr]{eucal}
\usepackage{harpoon}

\usepackage{tikz}
\usepackage[titletoc]{appendix} %附录编号
\usepackage[%backref=page,
linktocpage=true,colorlinks,citecolor=magenta,linkcolor=blue,urlcolor=magenta]{hyperref}
\allowdisplaybreaks[4]
\providecommand{\U}[1]{\protect \rule{.1in}{.1in}}

\usepackage{geometry}
\newtheorem{theorem}{Theorem}[section] 
\newtheorem{corollary}[theorem]{Corollary} 
\newtheorem{definition}[theorem]{Definition} 
\newtheorem{lemma}[theorem]{Lemma} 
\newtheorem{proposition}[theorem]{Proposition}

\theoremstyle{remark}
\newtheorem{remark}[theorem]{Remark} 
\numberwithin{equation}{section}  
\def\CR{{\mathcal C}}

\def\PR{{\mathcal P}}
\def\QR{{\mathcal Q}}
\def\SR{{\mathcal S}}

\def\C{\mathbb C} %空心字母
\def\R{\mathbb R}
\def\N{\mathbb N}

\DeclareMathAlphabet{\mathpzc}{OT1}{pzc}{m}{it}
\usepackage{bm}

\def\al{\alpha}
\def\ga{\gamma}

\def\ep{\varepsilon}
\def\iy{\infty}
\def\la{\lambda}

\def\pa{\partial}
\def\ti{\tilde}

\def\ov{\overline}

\def\f{\frac}
\def\sbr#1{\left(#1\right)} %圆括号
\def\lbr#1{\left\{#1\right\}} %花括号
\def\abs#1{\left\lvert#1\right\rvert} %绝对值
\def\vs{\vskip .2in} %跳跃0.2in的行间距
\def\rd{\mathrm d}   %直立的微分符号d
\def\Im{\mathrm{Im\,}}  %直立的虚部分符号Im
\def\loc{{\operatorname{\rm loc}}} %直立的局部符号loc
\begin{document}
\theoremstyle{plain}

\title[Classification of Blow-up Solutions]{On the Classification of blow-up solutions of a singular Liouville equation on the disk}

\author{Zhijie Chen}\address{Department of Mathematical Sciences, Yau Mathematical Sciences Center, Tsinghua University, Beijing, China}\email{zjchen2016@tsinghua.edu.cn}
\author{Houwang Li}\address{School of Mathematics and Statistics, Beijing Institute of Technology, Beijing, China}\email{houwangli@bit.edu.cn}
\author{Tuoxin Li}\address{Department of Mathematics, The Chinese University of Hong Kong, HK, China}\email{txli@math.cuhk.edu.hk}
\author{Juncheng Wei}\address{Department of Mathematics, The Chinese University of Hong Kong, HK, China}\email{wei@math.cuhk.edu.hk}

\keywords{}

%\thanks{...}

\begin{abstract}
    	We study the blow-up behavior of solutions to the singular Liouville equation
	\[
	\Delta \tilde u+\lambda e^{\tilde u}=4\pi\alpha\delta_0
	\quad\text{in }B,\quad
	\tilde u=0
	\quad\text{on }\partial B,
	\]
	where $\alpha>0$, $\lambda>0$ and $B\subset\mathbb R^2$ is the unit disk. Our main results give a complete classification of all blow-up solutions and determine the exact number of solutions to the above equation. More precisely, for fixed $\al>0$ and $\la\in(0,\la_\al)$, the singular Liouville equation has exactly $\lceil \alpha\rceil+2$ solutions (up to rotation): a unique minimal energy solution; a unique singular sequence blowing up at the origin; and for each $1\le m\le\lceil \alpha\rceil$, a unique $m$-peak sequence whose blow-up points are the vertices of a regular $m$-gon centered at the origin. This result answers the questions raised in Bartolucci-Montefusco \cite{Bartolucci-Montefusco06} and Bartolucci \cite{Bartolucci10}.  We also prove the non-degeneracy of these solutions. Thus we provide a full description of the blow-up structure for the singular Liouville equation on the disk.
\end{abstract}
\maketitle
%%%%%%%----------------------------------------------------------------------

\section{Introduction}\label{intro_class}

	In this paper, we study the singular Liouville equation 
    \begin{equation}\label{equ--1} \begin{cases}
			\Delta\ti u+\la e^{\ti u}=4\pi\al\delta_0, \quad\text{in}~B,\\
			\ti u=0,\quad\text{on}~\pa B,
		\end{cases}
	\end{equation}
	where $\al>0$, $\la>0$ and $B=\{x\in\R^2:~|x|^2< 1\}$ is the unit disk.
	Equation \eqref{equ--1} is a particular case of the following general singular Liouville equation
	\begin{equation}\label{0626-1}
		\left\{	\begin{aligned}
			&	\Delta\ti u+\la e^{\ti u}=4\pi \sum_{j=1}^n\ga_j\delta_{p_j}  \text{ in } \Omega,\\
			&  \ti u=0  \text{ on } \partial\Omega,
		\end{aligned}\right.		
	\end{equation}
	where $\la>0$, $\Omega$ is a smooth bounded domain in $\R^2$,  $p_1,\cdots,p_n$ are distinct points in $\Omega$, and $\ga_1,\cdots,\ga_n\ge0$.  
	In recent years, this problem and qualitatively similar ones have attracted great attention due to its profound connections with geometry and physics. For example, this type of equation arises in the study of Berger’s problem regarding the existence of metrics with prescribed Gaussian curvature. We refer to \cite{Aubin12, Chang-Yang88, Chang-Gursky-Yang93,Kazdan-Warner75} and the references therein for geometric applications. Furthermore, such an equation is introduced in astrophysics and combustion theory \cite{Chandrasekhar57, Gelfand63, Mignot-Puel-Murat79}, mean-field limit of vortices in Euler flows \cite{Caglioti-Lions-Marchioro-Pulvirenti92, Caglioti-Lions-Marchioro-Pulvirenti95, Chanillo-Kiessling94}, and vortices in the relativistic Maxwell-Chern-Simons-Higgs theory \cite{Caffarelli-Yang95, Chae-Imanuvilov00, Lin00, Struwe-Tarantello98}.
	
	The most important feature of equation \eqref{0626-1} is the blow-up phenomenon of solutions as $\la\to0$. Indeed, let $\la_n\to0$ and $\ti u_n$ be a solution sequence of \eqref{0626-1} with $\la=\la_n$.   
    We say that $\tilde u_n$ is a sequence of blow-up solutions of \eqref{0626-1} with parameter $\la_n\to0^+$ if, for some point $p\in\Omega$, there exists 
	$x_n\to p$ such that $\tilde u_n(x_n)+\log \la_n-2\ga_p\log|x_n-p|\to+\iy$, and there is a uniform bound on its mass
	\begin{equation*}
		\la_n\int_{\Omega}e^{\tilde u_n}\rd x\le C.
	\end{equation*}
	Here we set $\ga_p=0$ if $p$ is not a singular source.
	If a $\ga_j>0$ is not a positive integer ($\ga_j\in(0,\iy)\setminus\N^+$, $\N^+$ is the set of positive integers), we say $p_j$ is a {\it non-quantized singular source}, and if $\ga_j>0$ is a positive integer, we say $p_j$ is a {\it quantized singular source}.

	For a number of applications, considerable effort has been devoted to understanding precise blow-up behavior and the refined asymptotic behavior.
	If a blow-up point $p$ is either a regular point or a “non-quantized” singular source, the asymptotic behavior of $\tilde u_n$ around $p$ is well understood 
	\cite{BCLT04,Bartolucci-Jevnikar-Lee-Yang19-1,Bartolucci-Tarantello02,Bartolucci-Yang-Zhang24-1,Chen-Lin02,Chen-Lin10,Gluck,Li99,Li-Shafrir94,Zhang06,Zhang09}. As a matter
	of fact, $\tilde{u}_n$ performs ``simple'' blow-up around $p$ in the sense that, after scaling,
	the sequence $\tilde{u}_n$ behaves as a single bubble around the maximum point. 
	However, if $p$ happens to be a quantized singular source, the so-called ``non-simple'' blow-up phenomenon may occur (see \cite{Bartolucci-Tarantello,Kuo-Lin,Wei-Zhang-1,Wei-Zhang-2,Wei-Zhang-3}).
	Due to the analytically essential difference between $\ga_p\in\N^+$ and $\ga_p\not\in\N^+$, the study of non-simple blow-up solutions has been a major challenge for Liouville equations and progress on this problem remained limited for many years.
	Recently, D’Aprile, Wei and Zhang \cite{Teresa-Wei-Zhang25} made significant progress, demonstrating that the boundary data have a great influence on the profile of blow-up solutions inside. 
	Applied to the Dirichlet problem, this result shows that if $\tilde u_n$ is a sequence of blow-up solutions of \eqref{0626-1}, then $\tilde u_n$ is simple around any blow-up point in $\Omega$.
	
	Meanwhile, the existence of solutions with such blow-up phenomena has also attracted much attention. By the reduction method, many types of blow-up solutions have been constructed in \cite{Baraket-Pacard98, D’Aprile13, D’Aprile19, D’Aprile-Wei-Zhang24,Teresa-Wei20, delPino-1,del Pino-Kowalczyk-Musso05, Esposito05}, and we refer to their references for more related results.
		
	\vskip0.1in
	Despite the various interesting results mentioned above, we restrict ourselves to the simplest model problem: $\la_n\to0$ and $\ti u_n$ is a solution sequence of \eqref{equ--1} with $\la=\la_n$.
    Setting $u_n=\tilde u_n-2\al\log|x|$, one finds that $u_n$ satisfies
	\begin{equation}\label{equ-2} \begin{cases}
			\Delta u_n+\la_n|x|^{2\al}e^{u_n}=0, \quad\text{in}~B,\\
			u_n=0,\quad\text{on}~\pa B.
		\end{cases}
	\end{equation}
	Then, by the work of \cite{ Bartolucci-Tarantello02,Brezis-Merle,Esposito05,Li-Shafrir94,Nagasaki-Suzuki90}, we get that either $u_n\to0$ uniformly in $C^2(\bar B)$; or $u_n$ blows up, and there exists a non-empty set $\SR=\{a_1,\cdots,a_m\}\subset B$ such that
	\[\la_n|x|^{2\al}e^{u_n}\to\sum_{i=1}^m\beta_{a_i}\delta_{a_i} \]
	in the sense of measures, where 
	\[\beta_{a_i}=\begin{cases}8\pi,\quad\text{if}~a_i\neq 0,\\ 8\pi(1+\al),\quad\text{if}~a_i=0.\end{cases}\]
	Moreover, if $0\not\in\SR$, there holds that $(a_1,\cdots,a_m)\subset B^m\setminus\{a_i=a_j,~i\neq j\}$ is a critical point of the following function
	\begin{equation}\label{Phi} \Phi_m(a_1,\cdots,a_m)=\sum_{i=1}^m\log|a_i|^{2\al}+4\pi\sum_{i=1}^m H(a_i,a_i) +4\pi\sum_{i,j=1,i\neq j}^m G(a_i,a_j), 
	\end{equation}
	where $G(x,y)$ is the Green function of $\Delta$ with respect to the Dirichlet condition and $H(x,y)$ is its regular part.
	We say that a solution sequence $u_n$ is an {\it $m$-peak bubble solution} if it satisfies
                    \[\la_n|x|^{2\al}e^{u_n}\to8\pi\sum_{j=1}^m\delta_{a_j}, \]
    for distinct points $a_j\in B\setminus\{0\}$; and say that a solution sequence $u_n$ is a {\it singular bubble solution} if it satisfies
                    \[\la_n|x|^{2\al}e^{u_n}\to8\pi(1+\al)\delta_0. \]
    %By \cite[Theorem 6.2]{Chen-Lin02}, \cite[Theorem 1.1']{Chen-Lin10} and \cite[Theorem 1.4]{Li-Li-Wei-25}, 
    Since we focus on the unit disk, by using results in \cite{Chen-Lin02,Chen-Lin10,Li-Li-Wei-25}, we get
    \begin{equation}\label{energy-1}
        \la_n\int_B|x|^{2\al}e^{u_n}\rd x=\begin{cases} 8m\pi+O(\la_n^2), \quad&\text{if $u_n$ is an $m$-peak bubble solution},\\ 8\pi(1+\al)+O(\la_n^2), \quad&\text{if $u_n$ is a singular bubble solution.}\end{cases}
    \end{equation}

    Our aim is to classify all blow-up solutions of \eqref{equ-2}, and then count the total number of solutions to the following equation, which is equivalent to \eqref{equ--1}:
    \begin{equation}\label{equ-0} \begin{cases}
			\Delta u+\la |x|^{2\al}e^{u}=0, \quad\text{in}~B,\\
			 u=0,\quad\text{on}~\pa B.
		\end{cases}
	\end{equation}
    For fixed $\al>0$, we denote the solution set as
        \[\CR_\la=\{u\in H_0^1(B):~\text{$u$ is a solution of \eqref{equ-0}}\},\]
    and inspired by the above concentration behavior, we define for any $s\ge0$,
        \[\CR_{\la,s}=\lbr{u\in\CR_\la:~\abs{\la\int_B|x|^{2\al}e^u\rd x-8\pi s}<4\la}.\]
    Then by \eqref{energy-1}, it is easy to see that for $\la>0$ small enough, an $m$-peak bubble solution belongs to $\CR_{\la,m}$ and a singular bubble solution belongs to $\CR_{\la,1+\al}$.
	Moreover, by using the Pohozaev identity, we get that (see for example \cite[Proposition 5.7]{Bartolucci-Malchiodi13})
	\begin{equation}\label{bound} 
        \la\int_B|x|^{2\al}e^{u}\rd x<8\pi(1+\al),\quad\forall~u\in\CR_\la.
	\end{equation}
    Thus, for fixed $\al>0$ and $\la>0$ small enough, it holds
        \[\CR_{\la}=\CR_{\la,0}\bigcup
        \cup_{l=1}^{\lceil \alpha\rceil}\CR_{\la,l}\bigcup\CR_{\la,1+\al}.\]
    where $\lceil \alpha\rceil=\min\{n\in\mathbb{Z}: \alpha\le n\}$ . 
    By \cite{Esposito05}, we know that for $\la>0$ small enough, $\CR_{\la,0}$ consists of the unique minimal energy solution, so that $\sharp\CR_{\la,0}=1$. 
    In the following, we focus on the classification of blow-up solutions.
    
	\vskip0.1in
	
	To classify $m$-peak bubble solutions, we have to classify all critical points of $\Phi_m$. In the case of the unit disk, the functional $\Phi_m$ \eqref{Phi} can be written explicitly in complex notation. Identifying $(x_1,x_2)\in\mathbb R^2$ with $z=x_1+ix_2\in\mathbb C$, one obtains
	\begin{equation}\label{Phi complex}\begin{aligned}
	\Phi_m(z_1,\dots,z_m)
	=
	&2\alpha\sum_{j=1}^m \log|z_j|
	+2\sum_{j=1}^m \log(1-|z_j|^2)\\
	&+2\sum_{\substack{j,k=1\\ j\neq k}}^m \log|1-z_j\overline z_k|
	-2\sum_{\substack{j,k=1\\ j\neq k}}^m \log|z_j-z_k|.
	\end{aligned}\end{equation}
	This finite-dimensional problem is closely related to vortex-type Hamiltonians and is already of independent interest. It is motivated by the study of the point vortex model for two-dimensional Euler flows. We refer the interested readers to \cite{Boatto-Cabral03,Gueron-Shafrir99,Newton01,Palmore82} for more details concerning the $m$-vortex problem. However, even on the disk, the structure of $\Phi_m$ becomes rapidly intricate as $m$ increases. Direct computation is effective only in very low dimensions, and a more structural argument is needed in order to understand all critical configurations. Our first theorem shows that the situation is in fact completely rigid: every critical point of $\Phi_m$ is given by a regular polygon centered at the origin.
	\begin{theorem}\label{thm Phim_class}
		Let $m<1+\alpha$ be a positive integer and
			\begin{equation}\label{def r al m}
			r_{\alpha,m}=\left(\frac{\al+1-m}{\al+1+m}\right)^{\frac{1}{2m}}.    
		\end{equation}
		  Then $(z_1,\cdots, z_m)\in (B^*)^m\setminus \Theta_m$ is a critical point of $\Phi_m$ if and only if $z_1,\cdots, z_m$ are the vertices of a regular $m$-polygon satisfying
		\begin{equation*}
			|z_1|=\cdots=|z_m|=r_{\alpha,m}.
		\end{equation*}
        Moreover, $\Phi_{m'}$ has no critical points in $(B^*)^{m'}\setminus \Theta_{m'}$ for any integer $m'\ge1+\al$.
	\end{theorem}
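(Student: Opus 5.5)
The plan is to turn the critical point equations into an electrostatic (Stieltjes-type) system and then use a polynomial method to force the configuration to be a regular polygon. Using Wirtinger derivatives and conjugating, $\partial_{\bar z_j}\Phi_m=0$ reads
\begin{equation*}
\frac{\alpha}{z_j}-2\sum_{k\neq j}\frac{1}{z_j-z_k}+2\sum_{k=1}^m\frac{1}{z_j-w_k}=0,\qquad w_k:=\frac{1}{\bar z_k},\quad j=1,\dots,m.
\end{equation*}
Here the rewriting $-\bar z_k/(1-z_j\bar z_k)=1/(z_j-w_k)$ absorbs both the mixed terms and the regular part $H(z_j,z_j)$, which gives the $k=j$ term. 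So the $z_j$ are in equilibrium under a charge $\alpha$ at $0$, unit charges at the $z_k$, and opposite charges at the reflected points $w_k$. By the reflection symmetry of the disk, the analogous equations hold at each $w_k$, with $\alpha$ replaced by the corresponding reflected contribution. I will verify this by conjugating and multiplying through by $-\bar z_j^{\,2}$.

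The first step is the necessary condition $m<1+\alpha$, which gives the last assertion of the theorem. I multiply the $j$-th equation by $z_j$, sum over $j$, and take real parts. The charge at the origin gives $m\alpha$, and the symmetrized interaction sum gives $-m(m-1)$. The reflected terms give $2\sum_{j,k}\mathrm{Re}\,\frac{z_j}{z_j-w_k}=2\sum_{j,k}\mathrm{Re}\,\frac{z_j\bar z_k}{z_j\bar z_k-1}$. Pairing $(j,k)$ with $(k,j)$, I expect this to reduce to $-\sum_{j,k}\frac{|z_j|^2|z_k|^2\text{-type positive quantity}}{|1-z_j\bar z_k|^2}-m$. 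This should yield an identity of the form $m(\alpha+1-m)=S$ with $S>0$. Hence there are no critical points when $m\ge1+\alpha$; I need to get the constant $m$ exactly right in this bookkeeping.

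The second and main step is rigidity. Set $P(z)=\prod_k(z-z_k)$ and $Q(z)=\prod_k(z-w_k)$. At $z=z_j$ the system becomes
\begin{equation*}
zP''Q-\alpha P'Q-2zP'Q'=0 .
\end{equation*}
The reflected system at $z=w_k$ gives a similar relation with the roles of $P$ and $Q$ exchanged. Consequently the left-hand side is divisible by $P$, and its companion is divisible by $Q$. Counting degrees then yields a linear second-order system
\begin{equation*}
zP''Q-\alpha P'Q-2zP'Q'=c_1(z)PQ',\qquad zQ''P+\dots=c_2(z)P'Q,
\end{equation*}
in which $c_1,c_2$ are low-degree polynomials determined by leading coefficients. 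I expect this to be the main obstacle. One must choose the right combination, probably $R=PQ$ or the Wronskian-like quantity $zP'Q-zPQ'$, so that it satisfies a hypergeometric-type ODE in the variable $z^m$ whose only polynomial solutions of degree $2m$ are functions of $z^m$. The first thing I will try is to show that $F(z)=z(P'Q-PQ')$ satisfies a first-order equation forcing $F=\kappa z^{2m}+\mu$. If that works, comparing coefficients shows that $P$ and $Q$ involve only powers $z^{m}$, so $P=z^m-a$ and $Q=z^m-1/\bar a$.

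Finally, once $z_j^m=a$ for all $j$, the points are the vertices of a regular $m$-gon centered at $0$. The sums $\sum_{k\ne j}1/(z_j-z_k)=(m-1)/(2z_j)$ and $\sum_k 1/(z_j-w_k)=mz_j^{m-1}/(z_j^m-\bar a^{-1})$ are then explicit. The equation becomes
\begin{equation*}
\alpha-(m-1)+\frac{2m|a|^2}{|a|^2-1}=0,
\end{equation*}
so $|a|^2=\frac{\alpha+1-m}{\alpha+1+m}$, which is $r_{\alpha,m}^{2m}$. For the converse, the same computation shows that every such regular polygon is a critical point. This converse needs $m<1+\alpha$ so that the radius lies in $(0,1)$, and it completes the proof of Theorem~\ref{thm Phim_class}.
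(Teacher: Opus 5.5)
The genuine gap is the rigidity step, which you leave as a hope, and the kind of argument you propose cannot work. Degree counting gives a single identity (Lemma~\ref{lem P Q}). With $\beta=\alpha+1$, $P=\sum_n p_nz^n$ and $Q=\sum_n q_nz^n$, it says $\sum_{n+n'=N}(n-n')(n-n'-\beta)\,p_nq_{n'}=0$ for every $N$. In particular the cofactor of $P$ is $-((\alpha+2)Q'+zQ'')$, not a low-degree multiple of $Q'$. In addition you have the reflection relation $Q(z)=z^m\overline{P(1/\bar z)}/\overline{P(0)}$.

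These algebraic relations do not force $P$ to be a function of $z^m$. For $m=3$ and any $\beta>2$, take
\[
P(z)=z^3-3Az^2+3Bz-AB,\qquad |A|^2=\frac{\beta-1}{\beta+1},\quad |B|^2=\frac{\beta-2}{\beta+2},
\]
so that $Q(z)=z^3-3\bar A^{-1}z^2+3\bar B^{-1}z-(\bar A\bar B)^{-1}$. This pair satisfies all seven coefficient equations. For $A,B>0$ and $\beta$ large, the roots of $P$ are $1+\beta^{-1}\bigl(4^{1/3}e^{2\pi ik/3}-1\bigr)+O(\beta^{-2})$, $k=0,1,2$. They are simple, nonzero and distinct from the reflected points, so they solve \eqref{eq zj} in $(\mathbb C^*)^3$ without forming a triangle centered at $0$, and the $k=0$ root lies outside the unit disk. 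This is the Wronskian family with $(k_1,k_2)=(1,2)$.

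Hence no ODE or coefficient comparison built only from these relations can give rigidity. The open condition $|z_j|<1$ must enter essentially, and your plan never uses it. Your proposed target is also wrong: for the polygon, $z(P'Q-PQ')=m(a-\bar a^{-1})z^m$, not $\kappa z^{2m}+\mu$.

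The paper supplies both missing ingredients. First, it shows that $-\frac{d^2}{d\zeta^2}+2\sum_k\csc^2(\zeta-\zeta_k)$ is monodromy-free. By Chalykh--Feigin--Veselov, $\tau$ is then a Wronskian of the functions $\sin(k_j\zeta+\phi_j)$ with $\sum_j k_j=m$ and $\mathrm{Im}\,\phi_j=\frac14\log\frac{\beta+k_j}{\beta-k_j}$. Second, it excludes $l\ge2$ by deforming $\beta\to\infty$ and using $|z_j|<1$: no roots on the circle, $|P_\infty(0)|=1$, convexity of $E_0$ when the limits are simple, and a Puiseux argument with the $\mathcal P,\mathcal Q$ identity when they are multiple.

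Your first step can be repaired, but not as you describe it. The exact identity is
\[
m(\alpha+1)=\sum_{j,k}\frac{1-|z_j|^2|z_k|^2}{|1-z_j\bar z_k|^2}.
\]
So the reflected sum equals $m^2$ minus positive terms, not $-m$ minus positive terms; the latter would give the false bound $m<\alpha$. The needed inequality $\sum_{j,k}\frac{1-|z_j\bar z_k|^2}{|1-z_j\bar z_k|^2}>m^2$ does not hold termwise, because off-diagonal terms tend to $0$ as $z_j\bar z_k\to-1$. It follows instead from $\frac{1-|t|^2}{|1-t|^2}=\operatorname{Re}\frac{1+t}{1-t}=1+2\operatorname{Re}\sum_{n\ge1}t^n$ with $t=z_j\bar z_k$, which gives
\[
m(\alpha+1-m)=2\sum_{n\ge1}\Bigl|\sum_{j=1}^m z_j^n\Bigr|^2>0 .
\]
The right side is positive because the power sums cannot all vanish when every $z_j\neq0$.

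With this fix you have a short, self-contained proof of the last assertion of the theorem, more elementary than the paper's, which reads it off from $\mathrm{Im}\,\phi_j$ and the continuation in $\beta$. Your final computation for the polygon, giving $|a|^2=\frac{\alpha+1-m}{\alpha+1+m}$, and the converse are correct. The ``only if'' direction for $m<1+\alpha$, however, remains unproved.
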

	Here, as usual, we denote $B^*=B\setminus\{0\}$, and $\Theta_m:=\{(z_1,\cdots,z_m)\in B^m: z_j=z_l \text{ for some } j\neq l\}$. 
	When $m=1$ or $2$, Theorem \ref{thm Phim_class} has already been proved in \cite{Bartolucci-Montefusco06}. However, the structure of $\Phi_m$ becomes increasingly complicated when $m$ gets larger, and it is almost impossible to prove the result by direct computation. Indeed, we have to use some more indirect argument to prove it. See Section \ref{proof1.2_class} for details. By Theorem \ref{thm Phim_class}, it is easy to see that even for integer $\al$, $\CR_{\la,1+\al}$ contains only singular bubble solutions.

	Our main result is a full classification of the blow-up solutions of \eqref{equ-2}. It states that, for small $\lambda$, every blow-up solution is either the singular branch concentrating at the origin or one of the polygonal $m$-peak branches, and that no additional bubbling pattern can occur.

	\begin{theorem}\label{thm main_class}
		For any $\alpha>0$, there exists a small $\la_\alpha>0$ such that for $\la\in(0,\la_\al)$, the equation \eqref{equ-0} has exactly one sequence of singular bubble solutions that blows up only at $0$, and for each positive integer $m<\alpha+1$, the equation \eqref{equ-0} admits exactly one sequence of $m$-peak bubble solutions (up to rotation), whose blow-up points are the vertices of a regular $m$-polygon centered at the origin with radius $r_{\alpha,m}$ given by \eqref{def r al m}. 
        Moreover, for $\la\in(0,\la_\al)$, $\sharp\CR_{\la,1+\al}=1$ and $\sharp\CR_{\la,m}\mathbin{/}\sim=1$ for each $m=1,\cdots,\lceil \alpha\rceil$, where $\sim$ is the equivalence induced by the rotations.
	\end{theorem}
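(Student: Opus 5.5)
The proof splits into three parts. First, existence for each blow-up pattern. Second, localization of all blow-up solutions. Third, local uniqueness of each branch, proved by a Pohozaev-type contradiction argument combined with non-degeneracy of the limiting configurations.

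\emph{Existence and localization.} By the decomposition of $\CR_\la$ stated before Theorem \ref{thm Phim_class}, for $\la$ small every blow-up solution lies in some $\CR_{\la,m}$ with $1\le m\le\lceil\al\rceil$, or in $\CR_{\la,1+\al}$. Take a sequence $u_n\in\CR_{\la_n,m}$ with $\la_n\to0$. The concentration results quoted above, together with \eqref{energy-1}, imply that $u_n$ is either an $m$-peak bubble solution or (when $m=1+\al$) a singular bubble solution. In the $m$-peak case the limit configuration is a critical point of $\Phi_m$. By Theorem \ref{thm Phim_class} it is therefore a regular $m$-gon of radius $r_{\al,m}$, and this forces $m<1+\al$. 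In particular, if $\al\in\N$ then $\CR_{\la,1+\al}$ contains only singular bubbles.

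Existence then follows from a Lyapunov--Schmidt reduction in the spirit of \cite{Esposito05, del Pino-Kowalczyk-Musso05}. For this I need the polygon critical point to be non-degenerate modulo rotation. The plan is to restrict to the space of functions invariant under the dihedral group $D_m$ (rotations by $2\pi/m$ together with the reflection $z\mapsto\bar z$). On that space the reduced problem is one-dimensional, in the radius $r$. One computes $\Phi_m$ on polygons of radius $r$ and checks that its critical point $r_{\al,m}$ is non-degenerate. The singular bubble is radial and can be written down essentially explicitly, since radial solutions of \eqref{equ-0} on the disk are given by the Liouville formula $u=\log\frac{8(1+\al)^2\mu}{\la(1+\mu|x|^{2+2\al})^2}$ minus a boundary correction. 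Imposing $u=0$ on $\pa B$ gives a quadratic in $\mu$ with exactly two roots for small $\la$: one is the minimal solution and the other is the singular bubble.

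\emph{Local uniqueness.} Suppose $u^{(1)}_n\ne u^{(2)}_n$ are two $m$-peak solutions with the same $\la_n$. After a rotation, both concentrate at the same polygon; to fix the rotational freedom, normalize the maxima so that one peak lies on the positive real axis. Set $\xi_n=(u^{(1)}_n-u^{(2)}_n)/\|u^{(1)}_n-u^{(2)}_n\|_\iy$. Then $\xi_n$ satisfies a linearized equation $\Delta\xi_n+\la_n|x|^{2\al}c_n\xi_n=0$. Near each peak, after blowing up, $\xi_n$ converges to a kernel element of the linearized Liouville operator, of the form $\sum_k b_{j,k}\psi_k$. Away from the peaks, $\xi_n\to\sum_j b_{j,0}\cdot$(constant$\cdot G(\cdot,a_j)$-type) terms.

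The argument then proceeds in three steps.
\begin{enumerate}
\item Show $b_{j,0}=0$, using a Pohozaev identity on small balls together with the refined energy expansion \eqref{energy-1}, which agrees to order $O(\la^2)$ for the two solutions.
\item Show $b_{j,1}=b_{j,2}=0$, using the translation Pohozaev identities. These reduce to the statement $D^2\Phi_m(a)\,b=0$, and the non-degeneracy of $\Phi_m$ modulo rotation gives $b$ parallel to the rotation vector field. The normalization then kills this rotational component.
\item Conclude from the maximum principle that $\xi_n\to0$ uniformly, contradicting $\|\xi_n\|_\iy=1$.
\end{enumerate}
For the singular bubble the same scheme applies, with the singular kernel; for non-integer $\al$ only the radial kernel survives, and it is removed by the same energy Pohozaev argument. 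For integer $\al$ the additional kernel modes $\mathrm{Re}/\mathrm{Im}\,(z^{1+\al})$ must be excluded separately, using the Dirichlet boundary condition in the manner of \cite{Teresa-Wei-Zhang25}.

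\emph{Main obstacle.} I expect the key difficulty to be the full non-degeneracy of $\Phi_m$ at the polygon. Here the Hessian must be computed in all $2m$ directions, not just the symmetric one, and shown to have a kernel spanned only by rotation. I would use the $\mathbb Z_m$-symmetry to block-diagonalize the Hessian via discrete Fourier modes $e^{2\pi ikj/m}$. This reduces the problem to $m$ explicit $2\times2$ matrices, whose entries are sums of the form $\sum_{l\ne0}\frac{1}{|1-\omega^l|^2}$ and $\sum_l\frac{\omega^{lk}}{(1-r^2\omega^l)^2}$, with $r=r_{\al,m}$. The task is then to verify that each determinant is nonzero except for the rotation mode $k=0$. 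A secondary difficulty is the integer-$\al$ singular bubble, where higher kernel modes appear and precise boundary-induced estimates are needed.
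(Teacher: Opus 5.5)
The $m$-peak part of your plan follows the paper: Theorem \ref{thm Phim_class} locates the peaks, a discrete Fourier block-diagonalisation of $D^2\Phi_m$ shows its kernel is exactly the rotation, and a Bartolucci--Jevnikar--Lee--Yang contradiction argument finishes. Your symmetric Lyapunov--Schmidt existence argument also works: on $D_m$-symmetric polygons, $\Phi_m=2m\big[(\alpha+1-m)\log r+\log(1-r^{2m})\big]+\mathrm{const}$, which has the non-degenerate critical point $r_{\alpha,m}$. The paper gets existence more cheaply: if $u$ is a $1$-peak solution with parameter $\alpha'=\frac{\alpha+1}{m}-1>0$, then $u(x^m)$ is an $m$-peak solution with parameter $\alpha$.

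\textbf{The genuine gap is the singular bubble for $\alpha\in\mathbb N$.} There the normalized difference of two solutions blows up to a combination of $\psi_0$, $\frac{x^{1+\alpha}}{8(1+\alpha)^2+|x|^{2(1+\alpha)}}$ and its conjugate. You propose to remove the last two ``using the Dirichlet boundary condition in the manner of \cite{Teresa-Wei-Zhang25}'', but that is not an argument. That reference only rules out non-simple blow-up and says nothing about the difference of two solutions. Worse, these are exactly the directions where the usual Pohozaev identities carry no information. For every $U_{\Lambda,\xi}$, the Hopf differential $\partial_x^2w-\frac12(\partial_xw)^2$ of $w=U_{\Lambda,\xi}+2\alpha\log|x|$ equals the Schwarzian of $x^{1+\alpha}$, namely $-\frac{\alpha(\alpha+2)}{2x^2}$, independently of $\Lambda$ and $\xi$. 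Knowing $\xi=0$ in the limit (\cite{Li-Li-Wei-25}) also does not control $\xi^{(1)}-\xi^{(2)}$ relative to $\|u^{(1)}-u^{(2)}\|_\infty$.

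The paper never compares two solutions in this case. Instead it compares $u_n$ with its own rotation $u_n(xe^{2\pi i/(1+\alpha)})$:
\begin{enumerate}
\item The difference vanishes at $0$, which kills $\psi_0$.
\item The difference sums to zero over the rotation orbit, while $x^{1+\alpha}$ and $\bar x^{1+\alpha}$ are invariant under that rotation, so the extra modes die too. This gives $\frac{2\pi}{1+\alpha}$-symmetry.
\item Then $v_n(x)=u_n(x^{1/(1+\alpha)})$ solves $\Delta v_n+\frac{\lambda_n}{(1+\alpha)^2}e^{v_n}=0$ in $B$ with $v_n=0$ on $\partial B$. Gidas--Ni--Nirenberg \cite{Gidas-Ni-Nirenberg79} gives radial symmetry, and the ODE finishes.
\end{enumerate}
A symmetry-plus-moving-planes device of this kind is what your plan is missing.

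\textbf{Two smaller points.}

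First, the dilation mode ($b_{j,0}$, or $b_0$ for $\alpha\notin\mathbb N$) cannot be removed using \eqref{energy-1}. After dividing by $\|u^{(1)}_n-u^{(2)}_n\|_\infty$, which may be far smaller than $\lambda_n^2$, agreement of the masses to $O(\lambda_n^2)$ says nothing. You need an exact identity for $\sigma_n=(\rho^{(1)}_n-\rho^{(2)}_n)/\|u^{(1)}_n-u^{(2)}_n\|_\infty$. On the disk, the Pohozaev identity $\frac12\int_{\partial B}|\partial_\nu u|^2=2(1+\alpha)\rho-2\pi\lambda$, applied to both solutions, should give $\sigma_n=o(\varepsilon_n)$. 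Since $\xi_n(0)\to b_0$ forces $\frac{\sigma_n}{2\pi}\log\frac1{\varepsilon_n}\to-b_0$, this yields $b_0=0$. The paper sidesteps this entirely: it proves reflection symmetry of $u_n$ about lines through $0$, then reruns the argument on the odd kernel element $\partial_\theta u_n$ to get radial symmetry.

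Second, in the $m$-peak case, normalizing each solution separately so that a maximum lies on the positive $x_1$-axis does not make $\partial_{x_2}(u^{(1)}_n-u^{(2)}_n)$ vanish at $x^{(1)}_{n,1}$, because the two maxima differ. So $b_{1,2}=0$ needs an extra estimate. The paper first compares $u_n$ with its reflection, which has the same maximum point, so $\nabla\xi_{n,1}(0)=0$ exactly. It deduces symmetry and only then compares two symmetric solutions. It also derives $|\boldsymbol{x}_n-\boldsymbol{q}|=O(\mu_ne^{-\mu_n})$ by hand from the block structure of the Hessian. The degenerate Hessian does not give this automatically, and the estimates of \cite{Bartolucci-Jevnikar-Lee-Yang19-2} require it.
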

    
    In total, we get the exact number of solutions of \eqref{equ-0}.
    \begin{theorem}\label{thm number}
        For any $\alpha>0$, there exists a small $\la_\alpha>0$ such that for $\la\in(0,\la_\al)$, the number of solutions of \eqref{equ-0} is
            \[\sharp\CR_\la\mathbin{/}\sim=\lceil \alpha\rceil+2,\]
        where $\sim$ is the equivalence induced by the rotations.
    \end{theorem}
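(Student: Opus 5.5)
Theorem \ref{thm number} follows by combining Theorem \ref{thm main_class} with the decomposition of $\CR_\la$ stated before Theorem \ref{thm Phim_class}. I plan to proceed in three steps.

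\textbf{Step 1: decomposition of $\CR_\la$.} Fix $\al>0$. By the Pohozaev bound \eqref{bound}, every $u\in\CR_\la$ has mass strictly less than $8\pi(1+\al)$. The blow-up alternative for \eqref{equ-2}, together with the energy expansion \eqref{energy-1}, shows that for $\la$ small each solution lies in exactly one of
\[
\CR_{\la,0},\quad \CR_{\la,l}\ (1\le l\le \lceil\al\rceil),\quad \CR_{\la,1+\al}.
\]
To check this, argue by contradiction. Suppose there are $\la_n\to0$ and $u_n\in\CR_{\la_n}$ lying in none of these sets. Then either $u_n\to0$ in $C^2(\bar B)$, so the mass is $O(\la_n)$ and in fact below $4\la_n$, which puts $u_n$ in $\CR_{\la_n,0}$; or $u_n$ blows up. In the blow-up case the mass tends to $8\pi m$ with blow-up set away from $0$, or to $8\pi(1+\al)$ with $0\in\SR$. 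The error $O(\la_n^2)<4\la_n$ from \eqref{energy-1} then puts $u_n$ in the corresponding set, a contradiction.

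Three points need care here. First, a configuration containing $0$ together with other points would have mass exceeding $8\pi(1+\al)$, which \eqref{bound} forbids. Second, Theorem \ref{thm Phim_class} excludes $m\ge1+\al$. Third, for integer $\al$ the set $\CR_{\la,1+\al}$ coincides with $\CR_{\la,m}$ for $m=1+\al$, but by Theorem \ref{thm Phim_class} it contains only singular bubbles. So the indices are $l=1,\dots,\lceil\al\rceil$, where $\lceil\al\rceil$ is the number of integers $m$ with $1\le m<1+\al$. I also note that these sets are pairwise disjoint for small $\la$, since their target masses differ by at least $8\pi\min\{1,\mathrm{dist}(\al,\mathbb Z)\}$ or by $8\pi$; in the integer case two of the labels denote the same set, which is counted once.

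\textbf{Step 2: counting each piece.} We have $\sharp\CR_{\la,0}=1$ by \cite{Esposito05}, and $\CR_{\la,0}$ is rotation invariant because the minimal solution is radial. Theorem \ref{thm main_class} gives $\sharp\CR_{\la,1+\al}=1$ and $\sharp\CR_{\la,m}/\!\sim\;=1$ for $m=1,\dots,\lceil\al\rceil$. Rotations preserve each $\CR_{\la,s}$, so the equivalence classes respect the decomposition. Summing over the pieces gives
\[
1+\lceil\al\rceil+1=\lceil\al\rceil+2 .
\]

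\textbf{Step 3: the threshold $\la_\al$.} I take $\la_\al$ to be the minimum of the smallness thresholds from Step 1, from \cite{Esposito05}, and from Theorem \ref{thm main_class}. The only step requiring genuine care is the compactness argument of Step 1, namely that the sets exhaust $\CR_\la$ uniformly for small $\la$. This is handled by the contradiction argument above, using the known concentration–compactness alternative.
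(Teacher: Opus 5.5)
Your proposal is correct and follows the paper's own route. The paper also decomposes $\CR_\la$ by mass, using \eqref{bound}, the blow-up alternative and \eqref{energy-1}, then takes $\sharp\CR_{\la,0}=1$ from \cite{Esposito05} and the counts $\sharp\CR_{\la,1+\al}=1$, $\sharp\CR_{\la,m}/\!\sim\,=1$ from Theorem \ref{thm main_class}, and sums them; the only difference is that you write out the contradiction/compactness argument behind the decomposition, which the paper simply asserts for small $\la$.
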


    By using ODE methods, we see that there are two radial solutions of \eqref{equ-0}
        \[u_{\la,j}(x)=\log\f{\Lambda_{\la,j}}{\sbr{1+\f{\Lambda_{\la,j}}{8(1+\al)^2}|x|^{2(1+\al)}}^2}-\log\la,    \quad\text{for}~j=1,2,\]
    where $0<\Lambda_{\la,1}<\Lambda_{\la,2}$ are two solutions  of
            \[\f{\Lambda}{\sbr{1+\f{\Lambda}{8(1+\al)^2}}^2}=\la.\]
    It is easy to see that
        \[\lim_{\la\to0}\Lambda_{\la,1}=0,\quad \lim_{\la\to0}\Lambda_{\la,2}=+\iy,\]
    so that for $\la$ small, $\CR_{\la,0}=\{u_{\la,1}\}$ consists of the unique minimal energy solution, and $\CR_{\la,1+\al}=\{u_{\la,2}\}$ consists of the unique singular bubble solution.

	Beyond classification,  we study the non-degeneracy of these blow-up solutions. As is well-known, non-degeneracy is closely related to local uniqueness, and it is a fundamental property in the analysis of partial differential equations that ensures the structural rigidity of solutions under the invariant transformations of the equation. We refer the readers to \cite{Bartolucci-Jevnikar-Lee-Yang18,Bartolucci-Jevnikar-Lin19,Bartolucci-Yang-Zhang24-1,delPino-2,OSS13} for relevant non-degeneracy results.
	Now we introduce our definition of non-degeneracy, based on the definition in \cite{DKM}. Let $u\in\CR_\la$ be a solution to \eqref{equ-0}.	Let $\phi$ be a bounded solution to the linearized equation
	\begin{equation}\label{eq phi}
		\begin{cases}
			\Delta  \phi+\lambda|x|^{2\alpha}e^{ u}\phi=0, \quad\text{in}~B,\\
			\phi=0, \text{ on } \partial B.
		\end{cases}
	\end{equation}
	Then the rotational invariance of \eqref{equ-0} is reflected in the solution set of \eqref{eq phi}. 
	\begin{definition} We have the following definition of non-degeneracy:
		\begin{enumerate}[label=(\roman*)]
			\item For an $m$-peak bubble solution $u\in\CR_{\la,m}$, we say that it is non-degenerate if the solution to \eqref{eq phi} has the form $\phi=c(x_1\frac{\partial u}{\partial x_2}-x_2\frac{\partial u}{\partial x_1})$ for some constant $c$.
			\item For a radial solution $u\in\CR_{\la,0}\cup\CR_{\la,1+\al}$,  we say that it is non-degenerate if the only solution to \eqref{eq phi} is $\phi\equiv 0$.
		\end{enumerate}
	\end{definition}
	
	Then our result can be stated as follows:
	\begin{theorem}\label{thm nondegen_class}
	For any $\alpha>0$, there exists a small $\la_\alpha>0$ such that for $\la\in(0,\la_\al)$, each solution $u\in\CR_{\la}$ is non-degenerate.
	\end{theorem}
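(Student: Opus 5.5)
By Theorem \ref{thm number} and the decomposition $\CR_\la=\CR_{\la,0}\cup\bigcup_{l=1}^{\lceil\al\rceil}\CR_{\la,l}\cup\CR_{\la,1+\al}$, it suffices to treat three classes of solutions separately: the two radial solutions $u_{\la,1},u_{\la,2}$, and the polygonal $m$-peak solutions, $1\le m\le\lceil\al\rceil$.

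\textbf{Radial solutions.} Here I would work with the explicit formula. Set $\mu=\Lambda_{\la,j}$ and write $u=U_\mu-\log\la$, where $U_\mu$ solves $\Delta U+|x|^{2\al}e^U=0$ in $\R^2$. Solutions of the linearized equation on $\R^2$ come from the invariances of the entire problem. Since $\al>0$ is arbitrary, the only invariance that is always present is scaling, which gives
\[
Z_0=\pa_\mu U_\mu=\f{1}{\mu}\,\f{1-\f{\mu}{8(1+\al)^2}|x|^{2(1+\al)}}{1+\f{\mu}{8(1+\al)^2}|x|^{2(1+\al)}}.
\]
I would expand $\phi$ in Fourier modes $\phi=\sum_k\phi_k(r)e^{ik\theta}$.
\begin{itemize}
\item For $k=0$, regularity at the origin forces the radial ODE solution to be a multiple of $Z_0$. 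So $\phi_0\neq0$ is possible only if $Z_0(1)=0$, i.e. $\mu=8(1+\al)^2$.
\item On the other hand, $\la=\mu/(1+\mu/8(1+\al)^2)^2$ has a nondegenerate maximum exactly at $\mu=8(1+\al)^2$, with $\la_{\max}=2(1+\al)^2$. For $\la$ small the two roots satisfy $\Lambda_{\la,1}\to0$ and $\Lambda_{\la,2}\to\iy$, so neither equals the turning value and $\phi_0=0$.
\item For $k\ge1$, the equation for $\phi_k$ can be solved explicitly via the hypergeometric/Legendre-type change of variable $t=\mu|x|^{2(1+\al)}/8(1+\al)^2$. The regular solution at $0$ behaves like $r^k$ times a positive function unless $k=1+\al$ (or $k$ is a multiple related to $1+\al$), where an extra bounded kernel appears.
\end{itemize}
I would check that this regular solution does not vanish at $r=1$ when $\mu$ is very small or very large. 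For $\mu\to0$ the potential is small, so the operator is a perturbation of the Laplacian and the kernel is trivial. For $\mu\to\iy$ the regular solution converges after rescaling to the entire kernel element. If $k\neq1+\al$ this element has no zero, so $\phi_k(1)\neq0$. If $k=1+\al$ (integer $\al$), it is $\sim r^{k}/(1+t)$ and again does not vanish at $r=1$, so $\phi_k(1)\neq0$ there too.

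\textbf{$m$-peak solutions.} I would argue by contradiction in the standard way (as in Gladiali--Grossi, Bartolucci et al.). Suppose $\la_n\to0$ and $\phi_n$, normalized by $\|\phi_n\|_\iy=1$, solves \eqref{eq phi}. After subtracting its projection on the rotation generator $\pa_\theta u_n$, assume $\phi_n$ is orthogonal to it in a suitable weighted sense. Rescaling around each peak $a_{j,n}$ by $\delta_{j,n}$, the limit of $\phi_n(\delta_{j,n}y+a_{j,n})$ solves the linearized Liouville equation on $\R^2$, hence equals
\[
\sum_{i=1,2} b_{j,i}\,\f{y_i}{8+|y|^2}+b_{j,0}\,\f{8-|y|^2}{8+|y|^2}.
\]
Then I would carry out the following steps in order.
\begin{enumerate}
\item Away from the peaks $\phi_n\to\sum_j c_j G(\cdot,a_j)$, and local Pohozaev identities for $\phi_n$ force $b_{j,0}=0$. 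This needs the refined estimate $\la_n\int|x|^{2\al}e^{u_n}=8m\pi+O(\la_n^2)$ from \eqref{energy-1} and the vanishing of certain derivatives of the regular part.
\item Next, I would use the Pohozaev identity obtained by differentiating in translation directions at each peak. This yields $D^2\Phi_m(a)\,b=0$ with $b=(b_{j,1},b_{j,2})_j$.
\end{enumerate}
The key step is then the nondegeneracy of the polygonal critical point of $\Phi_m$ modulo rotation: $\ker D^2\Phi_m(a)$ is one-dimensional, spanned by $(ia_j)_j$. To prove it, I would exploit the $\mathbb Z_m$ symmetry, block-diagonalizing the Hessian via discrete Fourier modes in $j$ and computing each $2\times2$ block explicitly from \eqref{Phi complex} at $|z_j|=r_{\al,m}$. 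Alternatively, this might follow from the rigidity argument used for Theorem \ref{thm Phim_class}, e.g. via the degree of $\nabla\Phi_m$ or the Morse index being computable.

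Once the kernel is known, $b$ is proportional to the rotation direction, which is excluded by orthogonality, so $b=0$. Then $\phi_n\to0$ locally near the peaks and, via Green representation, away from them, contradicting $\|\phi_n\|_\iy=1$ by a maximum principle argument. \textbf{Main obstacle:} I expect the Hessian computation for general $m$ and $\al$ to be the hardest part, specifically showing that no Fourier block except the rotational one degenerates. A second delicate point is the $O(\la)$-precision Pohozaev estimate needed to kill $b_{j,0}$.
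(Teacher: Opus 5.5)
Your overall plan is sound. The radial part follows the paper, while the $m$-peak part takes a genuinely different route.

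For the $m$-peak solutions, the paper runs no blow-up argument on the kernel at all. It quotes Ohtsuka--Sato's expansion $\tau_{n,l}=1-48\pi\eta\,\la_n+o(\la_n)$ for the $2m$ eigenvalues of $\Delta\phi+\tau\la_n|x|^{2\al}e^{u_n}\phi=0$ near $1$, with $\eta$ running over the eigenvalues of $D_0D^2\Phi_m(\boldsymbol q)D_0$. Lemma \ref{lem evalue H} then at once makes $\tau=1$ simple, with eigenfunction $\pa_\theta u_n$.

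Your argument is instead the linearized analogue of the paper's proof of Lemma \ref{lemma symmetry-1}, with orthogonality to $\pa_\theta u_n$ replacing the normalization $\nabla\xi_{n,1}(0)=0$. It closes. Multiplied by the bubble scale (common to all peaks by the $\frac{2\pi}{m}$-symmetry), the rescalings of $\pa_\theta u_n$ at the peaks converge to a nonzero multiple of the rotation vector $\boldsymbol v$ in the coordinates $(b_{j,1},b_{j,2})$. Hence orthogonality passes to the limit as $\boldsymbol b\cdot\boldsymbol v=0$, which together with $\boldsymbol b=c\boldsymbol v$ forces $\boldsymbol b=0$.

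The paper's route is short and gives more (the position and sign of every eigenvalue near $1$), at the price of a heavy external theorem. Yours is self-contained modulo the standard Gladiali--Grossi-type Pohozaev computations. In those, nondegeneracy of the critical point is used only at the last step, which is exactly where you substitute orthogonality. Both routes rest on the same key input, the one-dimensional kernel of $D^2\Phi_m(\boldsymbol q)$. The paper proves it in the appendix by precisely the discrete Fourier block-diagonalization you propose.

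For the radial solutions, your $k=0$ analysis via $Z_0=\pa_\mu U_\mu$ and the turning point $\mu=8(1+\al)^2$ is fine, as is the perturbative argument for $\Lambda_{\la,1}\to0$. Your justification for $\mu\to\infty$ and $k\ge1$ is wrong as written, though. For $k\neq1+\al$ there is no bounded entire kernel element in mode $k$, and for $k<1+\al$ the regular solution is not $r^k$ times a positive function.

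In the variable $s=\sqrt t$, the regular solution is exactly $s^\delta\left((\delta-1)+\f{2}{1+s^2}\right)$ with $\delta=\f{k}{1+\al}$, independent of $\mu$. For $\delta<1$ it vanishes at $s^2=\f{1+\delta}{1-\delta}$. Moreover, convergence on compact sets could not control the value at the boundary, which sits at $s_n=\sqrt{\mu}/(2\sqrt2\,(1+\al))\to\infty$.

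The correct check, as in the paper, is that this zero is fixed while $s_n\to\infty$. There the solution behaves like $(\delta-1)s_n^\delta\neq0$ for $\delta\neq1$, and equals $\f{2s_n}{1+s_n^2}>0$ for $\delta=1$.
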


	The proof of our main result, Theorem \ref{thm main_class}, has two principal ingredients. The first is contained in Theorem \ref{thm Phim_class}, that is, the classification of the critical points of the reduced Hamiltonian. 	
	This classification problem was initiated by  Bartolucci and Montefusco in \cite{Bartolucci-Montefusco06}, in which the authors characterized the $1$-peak and $2$-peak bubble solutions. Later Bartolucci \cite{Bartolucci10} studied the $m$-peak bubble solutions, under the assumptions that the blow-up points are the vertices of a regular $m$-polygon. Now our approach is to recast the critical-point system into a polynomial framework and exploit the special algebraic structure. This reduction makes it possible to rule out all asymmetric configurations and to identify the regular $m$-polygon as the unique admissible pattern. 
	
	The second ingredient is local uniqueness.  Once the candidate blow-up configurations have been identified, one still has to show that each admissible pattern gives rise to only one blow-up branch modulo rotation. This part of the argument is treated separately according to the nature of the blow-up. For the $m$-peak solutions, we analyze the structure of the Hessian of the reduced Hamiltonian, whose only degeneracy comes from rotational invariance. For the singular blow-up solutions, the argument for the non-quantized case $\alpha\notin\mathbb N$ and the quantized case $\alpha\in\mathbb N$ is also slightly different. For more results about the local uniqueness of Liouville equations or mean field equations of various types, we refer to \cite{Bartolucci-Jevnikar-Lee-Yang19-1,Bartolucci-Jevnikar-Lee-Yang19-2,Bartolucci-Jevnikar-Lee-Yang20,Bartolucci-Yang-Zhang24-2,Wu-Zhang21}.
	
	The combination of these two steps yields the complete classification of blow-up solutions. The non-degeneracy result then follows from a further study of the linearized operator around each branch, with the same spirit of the proof of the local uniqueness result.
	
	The paper is organized as follows. In Section \ref{proof1.2_class} we classify the critical points of the reduced Hamiltonian $\Phi_m$ and prove Theorem \ref{thm Phim_class}. Section \ref{sec unique} is devoted to local uniqueness and is divided into three subsections, corresponding respectively to the $m$-peak case,  the singular non-quantized case $\alpha\notin\mathbb N$ and the singular quantized case $\alpha\in\mathbb N$. In Section \ref{proof1.1_class}, we complete the proof of Theorem \ref{thm main_class}. In Section \ref{proof1.3_class} we prove the non-degeneracy of the blow-up solutions. Finally, the appendix contains the matrix computations needed for the analysis of the Hessian in the $m$-peak case.

\vskip0.2in

\section{Locating the blow-up set}\label{proof1.2_class}

This section is devoted to the classification of the critical points of $\Phi_m$. Instead of working directly with the nonlinear critical-point system \eqref{eq zj}, we reformulate the problem in terms of two auxiliary polynomials $P$ and $Q$ and derive an identity that strongly constrains their roots. This algebraic viewpoint reveals the rigidity of the admissible configurations and ultimately shows that every critical point corresponds to a regular $m$-polygon centered at the origin.

First, by \eqref{Phi complex} and direct computation, we have
\begin{equation*}
\begin{aligned}
    \pa_{z_j}\Phi_m (z_1,\cdots,z_m)
    =&\al\f{\bar z_j}{|z_j|^2}-2\f{\bar z_j}{1-|z_j|^2}+2\sum_{k=1,k\neq j}^m\f{\bar z_k(\bar z_jz_k-1)}{|z_j\bar z_k-1|^2}-2\sum_{k=1,k\neq j}^m\f{\bar z_j-\bar z_k}{|z_j-z_k|^2} \\
    =&\al\f{1}{z_j}+2\f{1}{z_j-\bar z_j^{-1}}+2\sum_{k=1,k\neq j}^m\f{1}{z_j-\bar z_k^{-1}}-2\sum_{k=1,k\neq j}^m\f{1}{z_j-z_k}.
\end{aligned}   
\end{equation*}
Then, since $\pa_{\bar z_j}\Phi_m=\ov{\pa_{z_j}\Phi_m}$, we get that if $(z_1,\cdots,z_m)\in (B^*)^m\setminus \Theta_m$ is a critical point of $\Phi_m$, then
\begin{equation}\label{eq zj} 
    \sum_{k=1,k\neq j}^m\f{1}{z_j-z_k}=\f{\al}{2}\f{1}{z_j}+\sum_{k=1}^m\f{1}{z_j-\bar z_k^{-1}},\quad\forall~1\le j\le m.
\end{equation}

To prove Theorem \ref{thm Phim_class}, it is useful to introduce the polynomials
\begin{equation}\label{def P Q}
    P(z)=\prod_{j=1}^m(z-z_j),\quad Q(z)=\prod_{j=1}^m(z-\bar z_j^{-1}).
\end{equation}
Then it is enough to show that
\begin{equation*}
    P(z)=z^m-e^{i\theta}r_{\alpha,m}^m
\end{equation*}
for some $\theta\in\R$.
This idea comes from Appendix A in \cite{Bartolucci-Tarantello} and Lemma 2 in \cite{Kuo-Lin}. More specifically, the authors showed that after scaling, the local maximum points of a non-simple blow-up sequence of solutions are evenly distributed on the unit circle by studying the corresponding polynomial. In our case, though we cannot obtain the result directly as in \cite{Bartolucci-Tarantello} or \cite{Kuo-Lin}, we have the following useful relation between $P(z)$ and $Q(z)$:

\begin{lemma}\label{lem P Q}
Let $P(z)$ and $Q(z)$ be given in \eqref{def P Q}. Then \eqref{eq zj} implies
    \begin{equation}\label{eq P Q}
    2zP'Q'+(\al P'-zP'')Q-((\al+2)Q'+zQ'')P=0,\quad\forall~z\in\C.
\end{equation}
\end{lemma}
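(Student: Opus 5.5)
The plan is a root-counting argument. Let $L(z)$ denote the left-hand side of \eqref{eq P Q}. Each of its five terms, $2zP'Q'$, $\alpha P'Q$, $zP''Q$, $(\alpha+2)Q'P$ and $zQ''P$, is a polynomial of degree at most $2m-1$, so $L$ has degree at most $2m-1$. It therefore suffices to show that $L$ vanishes at $2m$ distinct points. The natural candidates are the roots $z_1,\dots,z_m$ of $P$ and the roots $w_k:=\bar z_k^{-1}$, $k=1,\dots,m$, of $Q$. These $2m$ points are pairwise distinct: the $z_j$ are distinct because the configuration lies outside $\Theta_m$, hence so are the $w_k$; moreover $z_j\in B^*$ while $|w_k|>1$. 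In particular $P'(z_j)\neq0$, $Q(z_j)\neq0$, $Q'(w_k)\neq0$, $P(w_k)\neq0$, and $z_j,w_k\neq0$.

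\emph{Vanishing at $z_j$.} Since $P(z_j)=0$, we have
\[
L(z_j)=2z_jP'(z_j)Q'(z_j)+\bigl(\alpha P'(z_j)-z_jP''(z_j)\bigr)Q(z_j).
\]
Dividing by $z_jP'(z_j)Q(z_j)$, the condition $L(z_j)=0$ becomes $2\frac{Q'}{Q}(z_j)+\frac{\alpha}{z_j}-\frac{P''}{P'}(z_j)=0$. We then use the standard identities
\[
\frac{P''}{P'}(z_j)=2\sum_{k\neq j}\frac{1}{z_j-z_k},\qquad \frac{Q'}{Q}(z_j)=\sum_{k}\frac{1}{z_j-\bar z_k^{-1}},
\]
where the first follows by differentiating $P=(z-z_j)R$ twice. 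With these, the condition is exactly twice \eqref{eq zj}.

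\emph{Vanishing at $w_k$.} Since $Q(w_k)=0$, dividing $L(w_k)$ by $w_kQ'(w_k)P(w_k)$ reduces the condition $L(w_k)=0$ to
\[
\sum_{l}\frac{1}{w_k-z_l}-\sum_{l\neq k}\frac{1}{w_k-w_l}-\frac{\alpha/2+1}{w_k}=0 .
\]
To obtain this, take the complex conjugate of \eqref{eq zj} with index $k$ and substitute $\bar z_k=1/w_k$, $\bar z_l=1/w_l$. The two elementary identities needed are
\[
\frac{1}{1/w_k-1/w_l}=w_k+\frac{w_k^2}{w_l-w_k},\qquad \frac{1}{1/w_k-1/z_l}=w_k+\frac{w_k^2}{z_l-w_k}.
\]
After the substitution, the left side of the conjugated equation becomes $(m-1)w_k-w_k^2\sum_{l\neq k}\frac{1}{w_k-w_l}$. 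The right side becomes $\frac{\alpha}{2}w_k+mw_k-w_k^2\sum_l\frac{1}{w_k-z_l}$. Dividing by $-w_k^2$ and rearranging yields precisely the displayed condition.

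Having $2m$ distinct roots of a polynomial of degree at most $2m-1$, we conclude $L\equiv0$. The only delicate point is the conjugation step at $w_k$. The reflected roots enter \eqref{eq zj} in the combination $\bar z_k-z_l^{-1}$, and this has to be rewritten in terms of $w_k-z_l$ and $w_k-w_l$. The constant terms $(m-1)w_k$ and $mw_k$ coming from that rewriting are what turn the coefficient $\alpha$ into $\alpha+2$, which explains the asymmetry between the $P$ and $Q$ terms in \eqref{eq P Q}.
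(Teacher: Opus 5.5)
Your proposal is correct and follows essentially the same route as the paper. Both arguments show that $z_1,\dots,z_m$ and $\bar z_1^{-1},\dots,\bar z_m^{-1}$ are $2m$ distinct zeros of the left-hand side of \eqref{eq P Q}, a polynomial of degree at most $2m-1$; the only cosmetic difference is that you work with logarithmic derivatives of $P$ and $Q$, whereas the paper differentiates $P/Q$ and $Q/P$. Your explicit conjugation computation at $w_k=\bar z_k^{-1}$, which correctly produces the coefficient $\alpha+2$, fills in the step the paper dismisses with ``in the same way.''
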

\begin{proof}
    Taking derivatives of the rational function $\f{P(z)}{Q(z)}$, we obtain
    \[\sbr{\f{P(z)}{Q(z)}}'\Bigg|_{z=z_j}=\f{\prod_{k=1,k\neq j}^m(z_j-z_k)}{\prod_{k=1}^m(z_j-\bar z_k^{-1})},\]
and 
    \[\sbr{\f{P(z)}{Q(z)}}''\Bigg|_{z=z_j}=2\sbr{\f{P(z)}{Q(z)}}'\Bigg|_{z=z_j}\sbr{\sum_{k=1,k\neq j}^m\f{1}{z_j-z_k}-\sum_{k=1}^m\f{1}{z_j-\bar z_k^{-1}} }.\]
Then by using \eqref{eq zj}, we get that
    \[\alpha\sbr{\f{P(z)}{Q(z)}}'\Bigg|_{z=z_j}-z_j\sbr{\f{P(z)}{Q(z)}}''\Bigg|_{z=z_j}=0,\quad\forall~1\le j\le m.\]
In other words, 
\begin{equation}\label{zj zero}
    z_1,\cdots,z_m \text{ are distinct zeros of } 2zP'Q'+\al P'Q-zP''Q.
\end{equation}

In the same way, we have
 \[(\al+2)\sbr{\f{Q(z)}{P(z)}}'\Bigg|_{z=\bar z_j^{-1}}+\bar z_j^{-1}\sbr{\f{Q(z)}{P(z)}}''\Bigg|_{z=\bar z_j^{-1}}=0,\quad\forall~1\le j\le m.\]
and consequently,
\begin{equation}\label{bar zj zero}
\bar z_1^{-1},\cdots,\bar z_m^{-1} \text{ are distinct zeros of } 2zP'Q'-(\al+2) PQ'-zPQ''. 
\end{equation}

In total, we get from \eqref{zj zero} and \eqref{bar zj zero} that $z_1,\cdots,z_m,\bar z_1^{-1},\cdots,\bar z_m^{-1}$ are $2m$ distinct zeros of the polynomial
    \begin{equation*}
        2zP'Q'+(\al P'-zP'')Q-((\al+2)Q'+zQ'')P,
    \end{equation*}
    whose degree is at most $2m-1$. Therefore the polynomial must be identically zero, and the lemma is proved.
\end{proof}

\begin{proposition}\label{prop no root on S}
   If \eqref{eq zj} holds, then $P(z)$ has no root on the unit circle.
\end{proposition}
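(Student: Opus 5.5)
The statement follows at once from the standing assumption $(z_1,\dots,z_m)\in(B^*)^m\setminus\Theta_m$: the roots of $P$ are exactly $z_1,\dots,z_m$, and each satisfies $0<|z_j|<1$, so none lies on $\partial B$. I would record this first. Since the proposition is stated under the sole hypothesis that \eqref{eq zj} holds, I would also give an argument that uses only the algebraic structure, namely Lemma \ref{lem P Q} together with the distinctness of the $z_j$. That is the form needed if one later wants to run the argument for configurations whose moduli are not known a priori.

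\emph{Reduction to a common root.} Suppose $z_0=z_{j_0}$ is a root of $P$ with $|z_0|=1$. Then $\bar z_0^{-1}=z_0$, so $z_0$ is also a root of $Q$. Because the $z_j$ are pairwise distinct, $z_0$ is a simple root of $P$. The roots of $Q$ are the points $\bar z_k^{-1}$, and these are pairwise distinct because $w\mapsto\bar w^{-1}$ is injective. Hence $z_0$ is also a simple root of $Q$. Write
\[
P(z)=(z-z_0)p(z),\qquad Q(z)=(z-z_0)q(z),
\]
with $p(z_0)\neq0$ and $q(z_0)\neq0$.

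\emph{Local expansion of \eqref{eq P Q}.} I would expand the identity \eqref{eq P Q} at $z=z_0$, treating more generally $P=(z-z_0)^a p$ and $Q=(z-z_0)^b q$. The lowest possible order is $(z-z_0)^{a+b-2}$, and only the terms carrying the highest derivatives contribute there:
\begin{itemize}
\item $2zP'Q'$ contributes $2z_0ab$;
\item $-zP''Q$ contributes $-z_0a(a-1)$;
\item $-zQ''P$ contributes $-z_0b(b-1)$.
\end{itemize}
The terms $\alpha P'Q$ and $(\alpha+2)Q'P$ only begin at order $a+b-1$. Summing, the coefficient of $(z-z_0)^{a+b-2}$ is
\[
z_0\bigl(a+b-(a-b)^2\bigr)\,p(z_0)q(z_0).
\]
With $a=b=1$ this equals $2z_0\,p(z_0)q(z_0)\neq0$, since $|z_0|=1$. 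This contradicts the identical vanishing of the polynomial in Lemma \ref{lem P Q}, so $P$ has no root on the unit circle.

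The only point needing care is the multiplicity bookkeeping. One must confirm that $z_0$ is simple for both $P$ and $Q$, which uses $z_j\neq z_k$ and the injectivity of $w\mapsto\bar w^{-1}$. One must also check that the $\alpha$-terms genuinely start one order higher and so cannot cancel the leading coefficient. Both checks are routine, so I expect no real obstacle.
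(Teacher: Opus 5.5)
Your argument is correct, and your first observation (all roots lie in $B^*$) already settles the statement as literally posed. Your algebraic argument is the paper's argument without the multiplicity bookkeeping. The paper also treats a root $z_0$ on the circle as a common root of $P$ and $Q$ of equal multiplicity $s$. It evaluates \eqref{eq P Q} at $z_0$ to get $P'(z_0)Q'(z_0)=0$ (your $a=b=1$ case), deduces $s\ge2$, and then expands to leading order to rule out $s\ge2$. You instead use distinctness of the $z_j$ to force $s=1$ and stop after that first step.

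What the paper's generality buys is its application in Proposition \ref{prop |z|>=1}, which needs that roots of $P$ cannot reach the circle as $\beta$ varies. At a configuration with $|z_{j_0}|=1$, \eqref{eq zj} is itself undefined, because of the term $1/(z_{j_0}-\bar z_{j_0}^{-1})$. The zero count in the proof of Lemma \ref{lem P Q} also fails there. So \eqref{eq P Q} holds only as a limit, and in a limit roots may coalesce. Your reliance on distinctness therefore does not cover the case your own stated motivation is about.

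Your general formula does cover it. With $a=b=s$, the coefficient of $(z-z_0)^{2s-2}$ is $2s\,z_0\,p(z_0)q(z_0)\neq0$, which gives the contradiction for every $s$ in one step. The same formula shows that the lowest-order term is $(z-z_0)^{2s-2}$, not $(z-z_0)^{2s-1}$ with coefficient $c_sc_s'(\alpha s-(\alpha+2)s)$ as the paper writes. For example, with $P=Q=(z-1)^2$ the left-hand side of \eqref{eq P Q} is exactly $4(z-1)^2$. So the paper's intermediate step $s\ge2$ is unnecessary and its stated coefficient is a slip, though its conclusion stands.
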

\begin{proof}
    Suppose on the contrary that $P(z)$ has a zero $z_j$ such that $|z_j|=1$. Let $s$ be the multiplicity of $z_j$. We first show that $s\ge 2$. In fact, since $z_j=\bar z_j^{-1}$, we see that $z_j$ is also a root of $Q(z)$ of multiplicity $s$. By \eqref{eq P Q}, we have $P'(z_j)Q'(z_j)=0$, then either $P'(z_j)=0$ or $Q'(z_j)=0$ implies that $z_j$ is a repeated root, that is, $s\ge 2$.

    Now let $P(z)=c_s(z-z_j)^s+...$ and $Q(z)=c_s'(z-z_j)^s+...$ be the Taylor expansions of $P$ and $Q$ near $z_j$ respectively. Then we expand the left hand side of \eqref{eq P Q} near $z_j$, and it follows that the coefficient of the leading term $(z-z_j)^{2s-1}$ is $c_sc_s'(\alpha s-(\alpha+2)s)\neq 0$, which is a contradiction.
\end{proof}

Next we give the classification of $P(z)$ under the condition \eqref{eq zj}. In what follows we write $\beta=\alpha+1$ for convenience. Let $z_j=e^{-2i\zeta_j}$ for $1\le j\le m$, where $\zeta_j\in\C$ is understood in the sense of mod $\pi$.  Using the identity $1-e^{-2ix}=2ie^{-ix}\sin x$, we find that \eqref{eq zj} becomes
	\begin{equation}\label{eq zetaj}
		\sum_{k=1, k\neq j}\cot(\zeta_j-\zeta_k)-\sum_{k=1}^m\cot(\zeta_j-\bar\zeta_k)=-i\beta,\quad\forall~1\le j\le m
	\end{equation}
	Let $\tau(\zeta)=\prod_{j=1}^m\sin(\zeta-\zeta_j)$ and $\tau_\sharp(\zeta)=\prod_{j=1}^m\sin(\zeta-\bar\zeta_j)$. Then
	\begin{equation}\label{P tau}
		P(e^{-2i\zeta})=(-2i)^m  e^{-im\zeta}e^{-i\sum_{j=1}^m\zeta_j}\tau(\zeta)
	\end{equation}
	Define
	\begin{equation}\label{psi 1}
		\psi_1(\zeta)=e^{-i\beta\zeta}\frac{\tau_\sharp(\zeta)}{\tau(\zeta)}.
	\end{equation}
	Then we have
	\begin{equation}\label{f zeta}
		f(\zeta):=\frac{\psi_1'(\zeta)}{\psi_1(\zeta)}=-i\beta-\sum_{j=1}^m\cot(\zeta-\zeta_j)+\sum_{j=1}^m\cot(\zeta-\bar\zeta_j).
	\end{equation}
	We observe that $f$ has simple poles at $\zeta_j$ and $\bar\zeta_j$. From \eqref{eq zetaj}, one immediately obtains that
	\begin{equation}\label{res f}
		\text{Res}f\Big|_{\zeta=\zeta_j}=-1,\quad  \text{Res}f^2\Big|_{\zeta=\zeta_j}=0.
	\end{equation}
	By symmetry it also holds that
	\begin{equation}\label{res f1}
		\text{Res}f\Big|_{\zeta=\bar\zeta_j}=1,\quad \text{Res}f^2\Big|_{\zeta=\bar\zeta_j}=0.
	\end{equation}
	
	Now we define $g(\zeta)=f'(\zeta)+f^2(\zeta)+\beta^2$. Then by definition, we see that $\psi_1$ satisfies the equation
	\begin{equation}\label{eq psi}
		-\psi''+g(\zeta)\psi=\beta^2\psi.
	\end{equation}
By \eqref{res f}, \eqref{res f1} and Lemma 2 in \cite{Veselov01}, the operator $-\frac{\rd^2}{\rd\zeta^2}+g(\zeta)$ is monodromy-free, in the sense that the corresponding Schr\"odinger equation
    \begin{equation*}
		-\psi''+g(\zeta)\psi=\lambda\psi
	\end{equation*}
    has all solutions meromorphic for all values of $\lambda$. We refer to \cite{Hemery-Veselov14} and the references therein for related topics and applications. Then by Theorem 4.3 in \cite{Chalykh-Feigin-Veselov99} (see also \cite{Berest97,Berest-Lutsenko97}), there exist integers $1\le k_1<\cdots<k_l$ and phases $\phi_j$, such that
	\begin{equation}\label{g=W}
		g(\zeta)=-2\frac{\rd^2}{\rd\zeta^2}W(\sin(k_1\zeta+\phi_1),\cdots, \sin(k_l\zeta+\phi_l)),
	\end{equation}
	where $W$ represents the Wronskian, and
	\begin{equation}\label{psi HV}
		\psi_{2}:=\frac{W(\sin(k_1\zeta+\phi_1),\cdots, \sin(k_l\zeta+\phi_l),e^{-i\beta\zeta})}{W(\sin(k_1\zeta+\phi_1),\cdots, \sin(k_l\zeta+\phi_l))}
	\end{equation}
	is also a solution to \eqref{eq psi}. For convenience we use $W$ and $\hat{W}$ to denote the denominator and the numerator in \eqref{psi HV} respectively. Then from \eqref{eq psi} we see that the Wronskian $W(\psi_1,\psi_2)$ is constant. Since both $\psi_1$ and $\psi_{2}$ behave like  a constant multiple of $e^{-i\beta\zeta}$ as $\Im\zeta\to -\infty$, we must have $W(\psi_1,\psi_2)=0$, which implies
	\begin{equation}\label{psi B psi HV}
		\psi_{1}=C_0\psi_2
	\end{equation}
	for some $C_0\in \C$.
    
	On the other hand, by \eqref{eq zetaj} and \eqref{f zeta}, direct computation yields that
	\begin{equation*}
		{ g(\zeta)=2\sum_{k=1}^m\csc^2(\zeta-\zeta_k)}=-2(\log(\tau))'',
	\end{equation*}
	which, together with \eqref{g=W}, implies that
	\begin{equation}\label{W=tau}
		W(\zeta)=C_1e^{ia\zeta}\tau(\zeta)
	\end{equation}
	for some $a$ and nonzero $C_1\in\C$. Now we compare the frequency of $\zeta$ in \eqref{W=tau}. First we introduce a useful formula:
	\begin{equation}\label{W vandermonde}
		W(e^{ia_1\zeta},\cdots,e^{ia_l\zeta})=i^{\frac{l(l-1)}{2}}\prod_{1\le p<q\le l}(a_q-a_p)e^{i\sum_{k=1}^la_k\zeta},
	\end{equation}
	which is an easy consequence of Vandermonde determinant. Next, we use an $l$-dimensional vector $\delta\in\{\pm 1\}^l$ to indicate the choice of signs, and for each $\delta$, we define
	\begin{equation}
		s_\delta=\sum_{j=1}^l\delta_jk_j,\quad V_\delta=\prod_{1\le p<q\le l}(\delta_q k_q-\delta_pk_p).
	\end{equation} 
	  Then since $\sin\zeta=\frac{e^{i\zeta}-e^{-i\zeta}}{2i}$, we can write
	\begin{equation}\label{W expan}
		W(\zeta)=\sum_{\delta\in\{\pm 1\}^l}C_\delta e^{is_\delta\zeta},
	\end{equation}
	where 
	\begin{equation}\label{C del}
		C_\delta=(2i)^li^{\frac{l(l-1)}{2}}(\prod_{j=1}^l\delta_j e^{i\delta_j\phi_j})V_\delta.
	\end{equation}
	Similarly, we also have
	\begin{equation}\label{hat W expan}
		\hat{W}(\zeta)=\sum_{\delta\in\{\pm 1\}^l}D_\delta e^{i(s_\delta-\beta)\zeta},
	\end{equation}
	where
	\begin{equation}\label{D del}
		D_\delta=(-i)^{-l}C_\delta\prod_{j=1}^l(\beta+\delta_j k_j).
	\end{equation}

	From \eqref{W expan} we see that the extreme frequencies of $W$ are $\pm (\sum_{j=1}^l k_j)$. By definition, it is also easy to see that the extreme frequencies of $\tau$ are $\pm m$. Therefore \eqref{W=tau} forces that
	\begin{equation}
		\sum_{j=1}^l k_j=m,\quad a=0.
	\end{equation}
	As a result, we have
	\begin{equation}\label{tau expan}
		\tau(\zeta)=C_1^{-1}W(\zeta)=C_1^{-1}\sum_{\delta\in\{\pm 1\}^l}C_\delta e^{is_\delta\zeta}.
	\end{equation}
	Consequently,
	\begin{equation}\label{tau_sh expan}
		\tau_\sharp(\zeta)=\overline{\tau(\bar\zeta)}=\bar C_1^{-1}\sum_{\delta\in\{\pm 1\}^l}\overline{C_{-\delta}} e^{is_\delta\zeta}.
	\end{equation}
	Now collecting \eqref{psi 1}, \eqref{psi HV}, \eqref{psi B psi HV} and \eqref{tau expan}, we get that
	\begin{equation*}
		\hat{W}=C_0C_1e^{-i\beta\zeta}\tau_\sharp.
	\end{equation*}
	Therefore by \eqref{hat W expan} and \eqref{tau_sh expan}, 
	\begin{equation}\label{id}
		\sum_{\delta\in\{\pm 1\}^l}D_\delta e^{is_\delta\zeta}=C_2\sum_{\delta\in\{\pm 1\}^l}\overline{C_{-\delta}} e^{is_\delta\zeta},
	\end{equation}
	where $C_2=C_0C_1\bar C_1^{-1}$.

    Based on the preparation above, we can compute $\Im\phi_j$ for $1\le j\le l$ explicitly.
	\begin{proposition}\label{prop Im phi_j}
		For $\phi_j$ introduced in \eqref{g=W}, we have
		\begin{equation}\label{Im phi_j}
			\Im\phi_j=\frac{1}{4}\log\frac{\beta+k_j}{\beta-k_j},\quad
            1\le j\le l.
		\end{equation}
	\end{proposition}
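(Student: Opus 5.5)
The plan is to read the coefficient identity \eqref{id} as an identity between two Wronskians, reduce it to an equality of two $l$-dimensional function spaces, and then read off $\Im\phi_j$ frequency by frequency. The point is that both sides of \eqref{id} factor through the multilinearity of the Wronskian. The hardest step is the middle one, showing that the two spaces coincide.

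\textbf{Step 1: both sides of \eqref{id} as Wronskians.} By \eqref{C del} and \eqref{D del},
\[
D_\delta\propto V_\delta\prod_{j=1}^l\delta_j(\beta+\delta_jk_j)e^{i\delta_j\phi_j}
\]
with a constant independent of $\delta$. The product is a product of one factor per index $j$. Comparing with \eqref{W vandermonde} and expanding the Wronskian multilinearly, the left side of \eqref{id} is a constant multiple of $W(h_1,\dots,h_l)$, where
\[
h_j(\zeta)=(\beta+k_j)e^{i\phi_j}e^{ik_j\zeta}-(\beta-k_j)e^{-i\phi_j}e^{-ik_j\zeta}.
\]
For the right side, note that $V_{-\delta}=(-1)^{l(l-1)/2}V_\delta$ is real. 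Then
\[
\overline{C_{-\delta}}\propto V_\delta\prod_j\delta_je^{i\delta_j\bar\phi_j},
\]
so the right side is a constant multiple of $W(g_1,\dots,g_l)$ with $g_j=\sin(k_j\zeta+\bar\phi_j)$. Hence \eqref{id} reads $W(h_1,\dots,h_l)=c\,W(g_1,\dots,g_l)$ for some $c\neq0$. It is essential to keep this as a function identity rather than compare coefficients termwise, because distinct sign vectors $\delta$ can share the same frequency $s_\delta$ (e.g.\ $k=1,2,3$).

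\textbf{Step 2: the two spans coincide (main obstacle).} I would show $\mathrm{span}\{h_j\}=\mathrm{span}\{g_j\}$. I would first try the Darboux--Crum structure already used above.
\begin{itemize}
\item Both spaces have nonvanishing Wronskian. By \eqref{W=tau}, applied to $\tau$ and $\tau_\sharp$, the zeros of both Wronskians are the points $\bar\zeta_k$.
\item Equivalently, $-2(\log W)''$ is the same potential for both; it is the conjugate potential $\overline{g(\bar\zeta)}$.
\item The order-$l$ monic intertwiner $A$ with $A\circ(-\partial^2)=(-\partial^2+\tilde g)\circ A$ is built from trigonometric quasi-exponentials. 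It should be uniquely determined by the potential together with the requirement that $A$ maps $e^{i\kappa\zeta}$ to $e^{i\kappa\zeta}$ times a rational function of $e^{i\zeta}$ of controlled degree.
\item The kernel of this $A$ is the span in question, so the two spans agree.
\end{itemize}
If this uniqueness is delicate, the fallback is an induction on the largest frequency $k_l$. The extreme frequencies $\pm m$ are attained by a unique $\delta$ each; comparing their coefficients fixes $c$ and gives the product relation $\prod_je^{-4\Im\phi_j}=\prod_j\frac{\beta-k_j}{\beta+k_j}$. One would then peel off $k_l$ by taking the leading asymptotics of both Wronskians along a suitable sign of $\Im\zeta$.

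\textbf{Step 3: identify each $h_j$ and read off $\Im\phi_j$.} Both spans are spanned by functions of the form $a_je^{ik_j\zeta}+b_je^{-ik_j\zeta}$ with the same distinct frequencies $k_j$. Once the spans agree, each $h_j$ is a multiple of $g_j$. Since
\[
g_j=\frac{1}{2i}\bigl(e^{i\bar\phi_j}e^{ik_j\zeta}-e^{-i\bar\phi_j}e^{-ik_j\zeta}\bigr),
\]
comparing ratios of the $e^{\pm ik_j\zeta}$ coefficients gives
\[
\frac{(\beta+k_j)e^{i\phi_j}}{(\beta-k_j)e^{-i\phi_j}}=e^{2i\bar\phi_j},
\qquad\text{i.e.}\qquad
e^{-4\Im\phi_j}=\frac{\beta-k_j}{\beta+k_j}.
\]
Positivity of the left side forces $\beta>k_j$, which is consistent with $k_j\le m$. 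Taking logarithms yields \eqref{Im phi_j}.
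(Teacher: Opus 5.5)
Your Step 1 is correct: in fact $h_j=2(y_j'+i\beta y_j)$ with $y_j=\sin(k_j\zeta+\phi_j)$, which is why $\hat W=\pm e^{-i\beta\zeta}W(h_1,\dots,h_l)$. Step 3 also follows from Step 2 by linear independence of the $e^{\pm ik_j\zeta}$. The gap is Step 2. It carries the entire content of the proposition, and you do not prove it.

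\textbf{Why neither route to Step 2 works as written.} The Darboux--Crum uniqueness is only asserted (``should be uniquely determined''), and it does not follow from the potential plus your normalization. For $l=1$, both $\partial-ik$ and $\partial+ik$ intertwine $-\partial^2$ with itself, and both map $e^{i\kappa\zeta}$ to a constant multiple of $e^{i\kappa\zeta}$, yet their kernels differ. Whatever singles out $\mathrm{span}\{h_j\}$ must use the specific form of this family, and proving that is the proposition itself. The fallback is not specified either. Asymptotics as $\Im\zeta\to\pm\infty$ just read off coefficients from the extreme frequencies inward, i.e.\ the termwise comparison you set aside. Peeling off the \emph{largest} frequency does not separate terms: for $k=(1,2,3)$, $\delta=(-1,-1,1)$ and $\delta=(1,1,-1)$ both give frequency $0$.

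\textbf{The missing idea.} The termwise comparison you dismissed does work, because the collisions are triangular if you start from the \emph{smallest} $k_j$.
\begin{enumerate}[label=(\arabic*)]
\item Write $h_j=\alpha_j(e^{ik_j\zeta}+x_je^{-ik_j\zeta})$ and $g_j=\gamma_j(e^{ik_j\zeta}+y_je^{-ik_j\zeta})$. Then $x_j=-\frac{\beta-k_j}{\beta+k_j}e^{-2i\phi_j}$ and $y_j=-e^{-2i\bar\phi_j}$.
\item For $S\subseteq\{1,\dots,l\}$, let $V_S=V_\delta$ with $\delta_j=-1$ exactly for $j\in S$. Each factor of $V_S$ is $\pm(k_q-k_p)$ or $\pm(k_q+k_p)$, so $V_S\neq0$.
\item Expand both Wronskians and normalize by the unique top frequency $m$ (the case $S=\emptyset$). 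Your identity becomes
\[
\sum_{S}V_S\prod_{j\in S}x_j\,e^{i(m-2\sum_{j\in S}k_j)\zeta}=\sum_{S}V_S\prod_{j\in S}y_j\,e^{i(m-2\sum_{j\in S}k_j)\zeta}.
\]
\item At frequency $m-2k_r$, the contributing sets are $S=\{r\}$ and subsets of $\{1,\dots,r-1\}$ with $\sum_{j\in S}k_j=k_r$. Any index $j>r$, or $r$ together with another index, pushes the sum above $k_r$.
\item Induct on $r=1,\dots,l$. The contributions from subsets of $\{1,\dots,r-1\}$ already agree, leaving $V_{\{r\}}x_r=V_{\{r\}}y_r$. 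This is exactly $e^{-4\Im\phi_r}=\frac{\beta-k_r}{\beta+k_r}$.
\end{enumerate}
In your example, the collision at $k_3=3$ between $S=\{3\}$ and $S=\{1,2\}$ is harmless, because $x_1x_2=y_1y_2$ is already known at that stage.

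This top-down comparison is essentially the paper's proof, which works directly with $D_\delta$ and $\overline{C_{-\delta}}$ in \eqref{id}. It yields your Step 3 conclusion without needing Step 2 at all.
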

	\begin{proof}
		The idea of the proof is to compare the frequency of $\zeta$ in \eqref{id} from the top to the bottom. We notice that though different $\delta\in\{\pm 1\}^l$ may lead to the same $s_\delta$, this cannot happen for the top three frequencies $m$, $(m-2k_1)$ and $(m-2k_2)$. This is the starting point of the proof.
		
		To be more precise, let
		\begin{equation*}
			\delta_+=(1,\cdots,1),\ \delta_-=(-1,\cdots,-1),\
			\delta_+'=(-1,1,\cdots,1), \
			\delta_-'=(1,-1,\cdots,-1). 
		\end{equation*}
	Then $s_{\delta_\pm}=\pm m$ and $s_{\delta_\pm'}=\pm (m-2k_1)$. Now by comparing the coefficients of $e^{\pm im\zeta }$, we get that
	\begin{equation*}
		\frac{D_{\delta_+}}{D_{\delta_-}}=\frac{C_{\delta_-}}{C_{\delta_+}}.
	\end{equation*}
Then it follows from \eqref{C del} and \eqref{D del} that
	\begin{equation}\label{k0 ratio}
		\prod_{j=1}^l(\frac{\beta-k_j}{\beta+k_j})=\Big|\frac{C_{\delta_+}}{C_{\delta_-}}\Big|^2=\Big|\prod_{j=1}^l e^{4i\phi_j}\Big|
	\end{equation}
	
	If $l=1$, then we are already done. If $l\ge 2$, then in the same way, by comparing the coefficients of $e^{\pm i(m-2k_1)\zeta }$, we can obtain
		\begin{equation}\label{k1 ratio}
		\frac{\beta+k_1}{\beta-k_1}\prod_{j=2}^l(\frac{\beta-k_j}{\beta+k_j})=\Big|\frac{C_{\delta_+}'}{C_{\delta_-}'}\Big|^2=\Big|e^{-4i\phi_1}\prod_{j=2}^l e^{4i\phi_j}\Big|,
	\end{equation}
which, together with \eqref{k0 ratio}, implies that $\Im\phi_1=\frac{1}{4}\log\frac{\beta+k_1}{\beta-k_1}$.
	
	Repeating the above steps gives $\Im\phi_2=\frac{1}{4}\log\frac{\beta+k_2}{\beta-k_2}$. However, when we proceed, we find that the next frequency can be either $(m-2k_3)$ or $m-(2k_1+2k_2)$, or they could be equal. In either case, when we write down the equality of coefficients as before, we immediately find that the terms involving $\phi_1$ or $\phi_2$ are automatically satisfied. Thus we can always get $\Im\phi_3=\frac{1}{4}\log\frac{\beta+k_3}{\beta-k_3}$ as expected. Repeating this process, we eventually obtain \eqref{Im phi_j} for all $j$.
	\end{proof}

\begin{remark}
    We have no constraint on $\Re\phi_j$ for all $1\le j\le l$.
\end{remark}
        
		Now we can go back to study the expression of $\tau(\zeta)$ and $P(z)$. If $l=1$, then $\Im\phi_1=\frac{1}{4}\log\frac{\beta+m}{\beta-m}$, and
		\begin{equation*}
			\tau(\zeta)=C_1^{-1}\sin(m\zeta+\phi_1)=\frac{e^{i(m\zeta+\phi_1)}-e^{-i(m\zeta+\phi_1)}}{2iC_1}.
		\end{equation*}
		By \eqref{P tau} and the fact that $P$ is monic, we must have
		\begin{equation}\label{P l=1}
			P(z)=z^m-e^{2i\phi_1}=z^m-e^{2i\Re\phi_1}\sqrt{\frac{\beta-m}{\beta+m}}.
		\end{equation}
		
		To complete the classification, we need to rule out all other cases. The idea is to use a continuity argument and focus on $P_\infty(z)$, the limit of $P(z)$ as $\beta\to\infty$.  

\begin{proposition}\label{prop |z|>=1}
		Let $m$ be a fixed positive integer. If $P_\infty(z)$ has at least one root that is not on the unit circle, then for any $\beta>m$, $P(z)$ has at least one root that is outside the unit circle.
\end{proposition}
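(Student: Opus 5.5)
The plan is a continuity (homotopy) argument in $\beta$, keeping the integers $k_1<\cdots<k_l$ with $\sum k_j=m$ and the real parts $\Re\phi_j$ fixed. For $\beta\in(m,\infty]$ set
\[\phi_j(\beta)=\Re\phi_j+\tfrac{i}{4}\log\tfrac{\beta+k_j}{\beta-k_j},\]
so that $\Im\phi_j(\beta)\to 0$ as $\beta\to\infty$. Let $\tau_\beta=C_1^{-1}W(\sin(k_1\zeta+\phi_1(\beta)),\dots,\sin(k_l\zeta+\phi_l(\beta)))$, and let $P_\beta$ be the monic polynomial attached to $\tau_\beta$ through \eqref{P tau}. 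By Proposition \ref{prop Im phi_j}, the $P$ coming from a solution of \eqref{eq zj} is $P_\beta$ for the given $\beta$. Its coefficients are continuous in $\beta$ up to $\beta=\infty$, where $P_\infty$ is obtained with real phases. $P_\beta$ is monic of degree $m$, and by \eqref{C del} its constant term has modulus $\prod_j\sqrt{(\beta-k_j)/(\beta+k_j)}\neq0$. Hence the $m$ roots of $P_\beta$ move continuously, stay bounded, and never pass through $0$.

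\textbf{Step 1: roots at $\beta=\infty$.} When all $\phi_j$ are real, $\overline{\tau_\infty(\bar\zeta)}$ is a constant multiple of $\tau_\infty(\zeta)$, i.e.\ $\tau_\sharp\propto\tau$. Translated through \eqref{P tau}, this says $P_\infty$ is self-inversive: $z^m\overline{P_\infty(1/\bar z)}=cP_\infty(z)$. So the roots of $P_\infty$ are symmetric under $z\mapsto\bar z^{-1}$, and a root off the unit circle forces a root $z_*$ with $|z_*|>1$. By Hurwitz's theorem (continuity of roots), $P_\beta$ still has a root outside the closed unit disk for all $\beta$ large enough.

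\textbf{Step 2: no root crosses the circle.} I will show that for every finite $\beta>m$, $P_\beta$ has no root on $|z|=1$. Then the number of roots of $P_\beta$ outside the closed unit disk is locally constant on the connected interval $(m,\infty)$, and Step 1 gives that it is $\ge1$ for every $\beta>m$.

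For the no-crossing property I would reverse the argument preceding Proposition \ref{prop Im phi_j}. By the Crum/Wronskian theory used in \eqref{g=W}--\eqref{psi HV}, $\psi_2=\hat W/W$ solves $-\psi''-2(\log W)''\psi=\beta^2\psi$. The specific imaginary parts $\Im\phi_j(\beta)$ are designed so that the coefficient identity \eqref{id} holds. With \eqref{C del}--\eqref{D del}, this gives $\hat W=C\,e^{-i\beta\zeta}\tau_\sharp$, so $\psi_1=e^{-i\beta\zeta}\tau_\sharp/\tau$ solves \eqref{eq psi} with $g=-2(\log\tau)''$. Expanding $f=\psi_1'/\psi_1$ near a simple zero $\zeta_j$ of $\tau$, the absence of first-order poles in $g=f'+f^2+\beta^2$ forces $\mathrm{Res}\,f^2=0$. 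This is \eqref{eq zetaj}, hence \eqref{eq zj}, and Lemma \ref{lem P Q} then yields \eqref{eq P Q} for $P_\beta$ and the associated $Q_\beta$. Because \eqref{eq P Q} is a polynomial identity, it extends by continuity in the parameters to the case of multiple roots. The proof of Proposition \ref{prop no root on S} uses only \eqref{eq P Q} together with $Q$ sharing the unit-circle roots of $P$ with the same multiplicity. That leading-coefficient computation, $c_sc_s'(\alpha s-(\alpha+2)s)\neq0$, works for any multiplicity $s\ge1$, so $P_\beta$ has no unimodular root.

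\textbf{Main obstacle.} The delicate point is Step 2: checking that \eqref{id} holds for every member of the deformed family, i.e.\ that the choice of $\Im\phi_j$ alone (with arbitrary $\Re\phi_j$) makes all frequency coefficients match, not just the top ones used in Proposition \ref{prop Im phi_j}. I would first try to verify this directly from \eqref{C del}--\eqref{D del}. Writing $e^{2i\delta_j\phi_j}$ with $|e^{2i\phi_j}|^2=(\beta-k_j)/(\beta+k_j)$, each term $D_\delta$ should factor as $\overline{C_{-\delta}}$ times a constant independent of $\delta$, because the factor $\prod_j(\beta+\delta_jk_j)$ is exactly compensated by $\prod_j e^{-4\delta_j\Im\phi_j}$. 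If this term-by-term matching fails, the fallback is to avoid \eqref{id} altogether and prove non-vanishing of $\tau_\beta$ on the real line (equivalently, no unimodular root) directly from the Wronskian determinant.
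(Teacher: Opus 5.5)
Your proposal is correct and follows the paper's own strategy: continuity in $\beta$ with $k_j$ and $\Re\phi_j$ frozen, a root of $P_\infty$ strictly outside the circle, and no crossing of $|z|=1$. It differs in two places: the paper gets the outside root more cheaply from $|P_\infty(0)|=\lim_{\beta\to\infty}\prod_j e^{-2\Im\phi_j}=1$ instead of self-inversiveness, and it simply invokes Proposition \ref{prop no root on S} along the deformed family without checking that the deformed $P_\beta$ still satisfy \eqref{eq zj}, which is precisely what your Step 2 supplies. Two small repairs are needed in that step: the $\delta$-independent factor is $\prod_j e^{-2\delta_j\Im\phi_j}(\beta+\delta_jk_j)=\prod_j\sqrt{\beta^2-k_j^2}$ rather than one built from $e^{-4\delta_j\Im\phi_j}$, and the passage to non-generic $\beta$ is legitimate because the discriminant of $P_\beta Q_\beta$ is real-analytic in $\beta$ and nonzero at the original $\beta=\alpha+1$ (alternatively, at a real zero $\zeta_0$ of $\tau$ of order $s$ the function $\psi_1=e^{-i\beta\zeta}\tau_\sharp/\tau$ is holomorphic and nonvanishing while $g=-2(\log\tau)''$ has the double pole $2s(\zeta-\zeta_0)^{-2}$, contradicting \eqref{eq psi} directly).
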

	\begin{proof}
	    Since $P(z)$ is monic, it is easy to see that  its constant term is
	\begin{equation*}
		P(0)=\frac{C_{\delta_+}}{C_{\delta_-}}=(-1)^{\frac{l(l+1)}{2}}\prod_{j=1}^l e^{2i\phi_j}.
	\end{equation*}
By Proposition \ref{prop Im phi_j}, $\lim\limits_{\beta\to\infty}\Im\phi_j=0$, which implies $|P_{\infty}(0)| =1$. We denote the roots of $P_\infty(z)$ by $z_{\infty,j}$. Then $\prod_{j=1}^m|z_{\infty,j}|=1$, and the condition of the proposition implies some of $|z_{\infty,j}|>1$. By
Proposition \ref{prop no root on S} and continuity, we always have $|z_j|>1$ for all $\beta$. This completes the proof.
	\end{proof}

By Proposition \ref{prop |z|>=1}, it remains to consider the case when all roots of $P_\infty(z)$ are on the unit circle, that is, $|z_{\infty,j}|=1$ for all $j$. We first assume that $z_{\infty,j}$ are distinct. Now we go back to analyze \eqref{eq zj} in polar coordinates. Let $z_{j}=r_je^{i\theta_j}$. Then after some computation, \eqref{eq zj} reduces to
\begin{equation}\label{eq zj polar} 
    \sum_{k=1,k\neq j}^m\left(\f{1}{r_j-r_ke^{i(\theta_k-\theta_j)}}-\f{1}{r_j-r_k^{-1}e^{i(\theta_k-\theta_j)}}\right)=\f{\beta-1}{2}\f{1}{r_j}+\f{r_j}{r_j^2-1},\quad\forall~1\le j\le m.
\end{equation}
Since $z_{\infty,j}$ are distinct, by implicit function theorem, we see that $z_j$ is smooth in $\frac{1}{\beta}$ for $\beta$ large enough. Thus we may write
\begin{equation*}
    r_j=1+\frac{a_{1j}}{\beta}+\frac{a_{2j}}{\beta^2}+O(\frac{1}{\beta^3}).
\end{equation*}
Then the right hand side of \eqref{eq zj polar} gives
\begin{equation*}
    \frac{(a_{1j}+1)}{2a_{1j}}\beta-\frac{a_{1j}^2+2a_{1j}+2a_{2j}}{4a_{1j}^2}+O(\frac{1}{\beta}).
\end{equation*}
On the other hand, since $\theta_{\infty,j}$ are distinct, the left hand side of \eqref{eq zj polar} is $O(\frac{1}{\beta})$. Hence $a_{1j}=-1, a_{2j}=\frac{1}{2}$, that is,
\begin{equation}\label{rj exp}
    r_j=1-\frac{1}{\beta}+\frac{1}{2\beta^2}+O(\frac{1}{\beta^3}) ,\quad\forall~1\le j\le m.
\end{equation}

Next we consider the angles $\theta_j$. To this end, we go back to the functional $\Phi_m$ and take the partial derivative with respect to $\theta_j$ to obtain
\[\begin{aligned}
    \frac{\partial\Phi_m}{\partial\theta_j}=4r_j\sum_{k\neq j}r_k\sin(\theta_j-\theta_k)\Bigg(&\frac{1}{1+r_j^2r_k^2-2r_jr_k\cos(\theta_j-\theta_k)} \\ 
    & \quad -\frac{1}{r_j^2+r_k^2-2r_jr_k\cos(\theta_j-\theta_k)}\Bigg).
\end{aligned}\]
So
\begin{equation}\label{eq thetaj}
    \sum_{k\neq j}\frac{(1-r_k^2)r_k\sin(\theta_j-\theta_k)}{\left(1+r_j^2r_k^2-2r_jr_k\cos(\theta_j-\theta_k)\right)\left(r_j^2+r_k^2-2r_jr_k\cos(\theta_j-\theta_k)\right)}=0.
\end{equation}
Since $\theta_j=\theta_{\infty,j}+O(\frac{1}{\beta})$, by \eqref{rj exp}, the leading term of \eqref{eq thetaj} gives that
\begin{equation*}
    \sum_{k\neq j}\frac{\sin(\theta_{\infty,j}-\theta_{\infty,k})}{(1-\cos(\theta_{\infty,j}-\theta_{\infty,k}))^2}=0,
\end{equation*}
or equivalently,
\begin{equation*}
    \sum_{k\neq j}\frac{\cos \frac{\theta_{\infty,j}-\theta_{\infty,k}}{2}}{\sin^3\frac{\theta_{\infty,j}-\theta_{\infty,k}}{2}}=0.
\end{equation*}

The above discussion implies that $(\theta_{\infty,1},\cdots,\theta_{\infty,m})$ is a critical point of the functional $E_0$ defined by
\begin{equation*}
    E_0(\varphi_1,\cdots,\varphi_m):=\sum_{1\le j<k\le m}\csc^2\frac{\varphi_j-\varphi_k}{2}.
\end{equation*}
Let $f_0(\varphi)=\frac{2+\cos\varphi}{2\sin^4\frac{\varphi}{2}}$. Then $f_0(\varphi)\ge\frac{1}{2}$, and direct computation gives that
\begin{equation*}
    \frac{\partial^2 E_0}{\partial\varphi_j\partial\varphi_k}=-f_0(\varphi_j-\varphi_k).
\end{equation*}
From this we can show that $E_0$ is convex in the convex set
\begin{equation*}
    X:=\{(\varphi_1,\cdots,\varphi_m):  \varphi_1<\varphi_2<\cdots<\varphi_m< \varphi_1+2\pi, \sum_{j=1}^m\varphi_j=0\}.
\end{equation*}
In fact, for any $v\in\R^m$,
\begin{equation*}
    v^{\text{T}} D^2E_0 v=\sum_{j<k}f_0(\varphi_j-\varphi_k)(v_j-v_k)^2\ge \frac{1}{2}(v_j-v_k)^2=\frac{1}{2}(m\sum_{j=1}^m v_j^2-(\sum_{j=1}^m v_j)^2).
\end{equation*}
Therefore
    $v^{\text{T}} D^2E_0 v \ge \frac{m}{2}\|v\|^2$
if $\sum_{j=1}^m v_j=0$, and the convexity is proved. On the other hand, it is easy to check that for  $\varphi_j=\frac{2\pi}{m}(j-\frac{m+1}{2})$, $(\varphi_1,\cdots,\varphi_m)$ is a critical point of $E_0$ in $X$. By convexity, this is also the unique one.

To summarize, we have proved that after relabeling, $\theta_{\infty,j}$ must have the form
\begin{equation*}
    \theta_{\infty,j}=\theta_0+\frac{2\pi}{m}(j-1), \quad j=1,\cdots,m.
\end{equation*}
This implies $P_{\infty}(z)=z^m-e^{im\theta_0}$. This corresponds to the case $l=1$ in previous discussion, and $P(z)$ must have the form \eqref{P l=1}.

\vskip0.1in
Finally, we consider the case when $P_{\infty}(z)$ has a repeated root on the unit circle. Let $z_{\infty,1}=\cdots=z_{\infty,p}=e^{i\theta_{\infty}}$ be a root of $P_\infty$ with multiplicity $p$. In this case, $z_j$ may be no longer smooth in $\frac{1}{\beta}$, and we have the following result in algebraic geometry:
	
	 By Puiseux parametrization and the conjugate-factor factorization of a local Weierstrass polynomial, each local branch around $e^{i\theta_\infty}$ admits a parametrization $z_j=\phi(\gamma)$, where $\gamma$ is one of the complex $q$-root of $\frac{1}{\beta}$ for some $q\le p$. See, for example, Theorem 3.3 and Proposition 3.4 in \cite{Greuel-Lossen-Shustin}.
	 
 Using the above result, we see that if the leading order in the expansion of $z_j-1$ is not $\frac{1}{\beta}$ for some $j$, then  $z_{\infty,j}$ must split like a regular polygon locally. Since $|z_{\infty,j}|=1$, at least one local branch is outside the unit disk, which is a contradiction. Therefore, in what follows we may assume that we can expand $z_j=r_j^{i\theta_j}$ in the form 
	\begin{equation*}
		r_j=1+\frac{a_{j}}{\beta}+O(\frac{1}{\beta^{1+\frac{1}{p}}}),\quad \theta_j=\theta_{\infty,j}+\frac{b_{j}}{\beta}+O(\frac{1}{\beta^{1+\frac{1}{p}}}).
	\end{equation*}
Then by comparing the coefficients of $\beta$ in \eqref{eq zj polar} for $1\le j\le p$, we obtain
\begin{equation}\label{eq a b}
    \sum_{k=1,k\neq j}^p\left(\frac{1}{a_j-a_k-i(b_k-b_j)}-\frac{1}{a_j+a_k-i(b_k-b_j)}\right)=\frac{a_j+1}{2a_j},\ 1\le j\le p.
\end{equation}
Let $w_j=a_j+ib_j$. Then \eqref{eq a b} becomes
\begin{equation}\label{eq wj}
    \sum_{k=1,k\neq j}^p\frac{1}{w_j-w_k}-\sum_{k=1}^p\frac{1}{w_j+\bar w_k}-\frac{1}{2}=0,\ 1\le j\le p.
\end{equation}

This system is similar to \eqref{eq zj}, so we use the same idea to study it. To be more precise, we define
\begin{equation}\label{def PR QR}
    \PR(z)=\prod_{j=1}^p(z-w_j),\quad \QR(z)=\prod_{j=1}^p(z+\bar w_j).
\end{equation}
Then $\PR(z)=(-1)^p\overline{\QR(-\bar z)}$, and we have the following result similar to Lemma \ref{lem P Q}:
\begin{lemma}\label{lem PR QR}
Let $\PR(z)$ and $\QR(z)$ be given in \eqref{def PR QR}. Then
    \begin{equation}\label{eq PR QR}
   (\PR''-\PR')\QR+(\QR''+\QR')\PR- 2\PR'\QR'=0,\quad\forall~z\in\C.
\end{equation}
\end{lemma}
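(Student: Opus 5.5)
The proof will follow the proof of Lemma \ref{lem P Q}, applied to the rational function $\PR/\QR$. Throughout, I take the system \eqref{eq wj} to be well defined. This means the $w_j$ are pairwise distinct and $w_j+\bar w_k\neq 0$ for all $j,k$, since otherwise the denominators in \eqref{eq wj} vanish. As a consequence, the $2p$ points $w_1,\dots,w_p,-\bar w_1,\dots,-\bar w_p$ are pairwise distinct. In particular $\PR$ and $\QR$ have no common root, and each has only simple roots.

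\emph{Step 1 (roots of $\PR$).} At $z=w_j$ we have $\PR(w_j)=0$ and $\QR(w_j)\neq0$. Differentiating the quotient twice and using $\PR(w_j)=0$ gives
\[
\Big(\tfrac{\PR}{\QR}\Big)'(w_j)=\tfrac{\PR'}{\QR}(w_j),\qquad
\Big(\tfrac{\PR}{\QR}\Big)''(w_j)=\tfrac{\PR''\QR-2\PR'\QR'}{\QR^2}(w_j).
\]
Simple roots also give the usual logarithmic-derivative formula for the ratio:
\[
\frac{(\PR/\QR)''}{(\PR/\QR)'}(w_j)=2\Big(\sum_{k\neq j}\frac{1}{w_j-w_k}-\sum_{k}\frac{1}{w_j+\bar w_k}\Big).
\]
By \eqref{eq wj} the right-hand side equals $1$. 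Combining the two displays, $w_j$ is a zero of $(\PR''-\PR')\QR-2\PR'\QR'$. The remaining term $(\QR''+\QR')\PR$ also vanishes at $w_j$ because $\PR(w_j)=0$. So every $w_j$ is a root of the left-hand side of \eqref{eq PR QR}.

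\emph{Step 2 (roots of $\QR$).} Take the complex conjugate of \eqref{eq wj} and set $u=-\bar w_j$. Rewriting in terms of $u$, with the roots $-\bar w_k$ of $\QR$ and $w_k$ of $\PR$, gives
\[
\sum_{k\neq j}\frac{1}{u+\bar w_k}-\sum_k\frac{1}{u-w_k}=-\tfrac12 .
\]
By the same computation applied to $\QR/\PR$, this says $(\QR/\PR)''=-(\QR/\PR)'$ at $u$. Hence $-\bar w_j$ is a zero of $(\QR''+\QR')\PR-2\PR'\QR'$. The term $(\PR''-\PR')\QR$ vanishes there trivially. So every $-\bar w_j$ is also a root of the left-hand side of \eqref{eq PR QR}.

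\emph{Step 3 (degree count).} Let $L$ denote the left-hand side of \eqref{eq PR QR}. Its possible top-degree terms are $-\PR'\QR$ and $\QR'\PR$, both of degree $2p-1$. They have leading coefficients $-p$ and $+p$, so they cancel. Every other term has degree at most $2p-2$, hence $\deg L\le 2p-2$. By Steps 1 and 2, $L$ has at least $2p$ distinct roots, so $L\equiv0$.

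The only delicate point is the distinctness and nondegeneracy assumption. It is what makes the logarithmic-derivative formula valid and what makes the $2p$ roots distinct. I would justify it by noting that the limiting relation \eqref{eq a b}, and hence \eqref{eq wj}, only makes sense when these denominators are nonzero. Note also that $a_j\neq0$ is forced by the right-hand side of \eqref{eq a b}. If this assumption could fail, I would pass to the generic case and argue by continuity, but I expect the nondegeneracy to be taken as part of the hypothesis, exactly as distinctness was in Lemma \ref{lem P Q}.
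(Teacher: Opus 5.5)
Your proposal is correct and follows the paper's proof. Both arguments rewrite \eqref{eq wj} in logarithmic-derivative form to show that each $w_j$ is a zero of $\PR''\QR-2\PR'\QR'-\PR'\QR$ and each $-\bar w_j$ is a zero of $\QR''\PR-2\PR'\QR'+\QR'\PR$, then conclude because a polynomial of degree at most $2p-2$ has $2p$ distinct roots; your remark that $w_j\neq w_k$ and $w_j+\bar w_k\neq 0$ are implicit in \eqref{eq wj} is the same tacit assumption the paper makes when it calls these zeros distinct.
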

\begin{proof}
    The proof is similar to that of Lemma \ref{lem P Q}. We observe that \eqref{eq wj} is equivalent to
    \begin{equation*}
        \frac{1}{2}\sbr{\f{\PR''(z)}{\PR'(z)}}\Bigg|_{z=w_j}-\sbr{\f{\QR'(z)}{\QR(z)}}\Bigg|_{z=w_j}=\frac{1}{2},\quad\forall~1\le j\le p.
    \end{equation*}
    So
    \begin{equation*}
    w_1,\cdots,w_p \text{ are distinct zeros of } \PR''\QR-2\PR'\QR'-\PR'\QR.
\end{equation*}
In the same way, we can show that
 \begin{equation*}
    -\bar w_1,\cdots,-\bar w_p \text{ are distinct zeros of } \QR''\PR-2\PR'\QR'+\QR'\PR.
\end{equation*}

In total, we get that $w_1,\cdots,w_p,-\bar w_1,\cdots,-\bar w_p$ are $2p$ distinct zeros of the polynomial
    \begin{equation*}
       (\PR''-\PR')\QR+(\QR''+\QR')\PR- 2\PR'\QR',
    \end{equation*}
    whose degree is at most $2p-2$. Therefore the polynomial must be identically zero, and the lemma is proved.
\end{proof}

\begin{proposition}\label{prop aj>0}
    At least one of $w_j$, $1\le j\le p$, has a positive real part.
\end{proposition}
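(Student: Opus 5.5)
We argue by contradiction, working only with the system \eqref{eq wj} and the identity \eqref{eq PR QR} of Lemma \ref{lem PR QR}. First, $a_j\neq0$ for every $j$, since the derivation of \eqref{eq a b} divides by $a_j$. So we assume $a_j<0$ for all $1\le j\le p$: all $w_j$ lie in the open left half-plane and all roots $-\bar w_j$ of $\QR$ lie in the open right half-plane. We then look for a quantity that \eqref{eq wj} pins down with the wrong sign under this assumption.

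\emph{Step 1: moment identities.} Multiply the $j$-th equation of \eqref{eq wj} by $w_j$ and sum over $j$. We use three facts:
\begin{itemize}
\item $\sum_{j\neq k} w_j/(w_j-w_k)=p(p-1)/2$;
\item $w_k+\bar w_j=\overline{w_j+\bar w_k}$, so for $j\neq k$ the terms $(j,k)$ and $(k,j)$ of $\sum_{j,k}w_j/(w_j+\bar w_k)$ have real parts summing to $1$;
\item each diagonal term $w_j/(2a_j)$ has real part $1/2$.
\end{itemize}
Taking real parts then gives $\sum_j a_j=-p$. The same relation follows by comparing the coefficient of $z^{2p-2}$ in \eqref{eq PR QR}. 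Comparing the next coefficient, $z^{2p-3}$, equivalently multiplying \eqref{eq wj} by $w_j^2$ before summing, should give a second identity involving $\sum_j w_j^2$ and $\sum_{j,k}w_j^2/(w_j+\bar w_k)$. The off-diagonal part can again be symmetrized using $w_k+\bar w_j=\overline{w_j+\bar w_k}$. These identities alone cannot produce a contradiction, since the first one is consistent with all $a_j<0$. They serve only as normalization.

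\emph{Step 2: a half-plane argument.} Put $h=\PR/\QR$. Since $\QR(z)=(-1)^p\overline{\PR(-\bar z)}$, we have $|h(iy)|=1$ on the imaginary axis. If all $w_j$ lie in the left half-plane, $h$ is a finite Blaschke-type product for the right half-plane, so $|h|<1$ there and $|h|>1$ in the left half-plane. Lemma \ref{lem PR QR} says, after dividing by $\QR^2$, that $h$ satisfies an explicit second order relation, exactly as in the proof of Lemma \ref{lem P Q}. At each $w_j$ this relation reads $h''=h'$, and symmetrically at the poles. I would rewrite the identity as a linear ODE for $\psi=e^{-z/2}h$ (or a similar gauge), the rational analogue of \eqref{eq psi}. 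The aim is a monotonicity or energy quantity along the real axis, such as $\frac{d}{dx}\bigl(|\psi|^2\bigr)$ or $\Im(\bar\psi\psi')$, whose sign is forced by $|h|<1$ on $\{\Re z>0\}$. This quantity should contradict the asymptotics $h\to1$ as $x\to+\infty$, combined with the normalization $\sum_j a_j=-p$.

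\emph{Fallback.} If Step 2 does not close, I would fall back on the monodromy-free structure already used for \eqref{eq psi}, now in its rational version. By \eqref{eq wj}, the potential $-2(\log\PR)''$ is monodromy-free with the relevant residues vanishing, so by the Adler--Moser / Duistermaat--Gr\"unbaum classification $\PR$ is, up to translation, a Wronskian of polynomial-times-exponential functions. Repeating the frequency comparison of Proposition \ref{prop Im phi_j} should then determine $\Re w_j$ explicitly, and one would check directly that some $\Re w_j>0$.

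I expect the main obstacle to be finding the right sign-definite quantity in Step 2. The simple averaged identities are all compatible with $a_j<0$, so the contradiction must come from the full structure of \eqref{eq PR QR}, not from moments alone.
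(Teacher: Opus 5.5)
Your Step 1 is correct and matches the paper's first step: $\sum_j a_j=-p$, which the paper reads off from the $z^{2p-2}$ coefficient of \eqref{eq PR QR}. The gap is everything after that. Step 2 never identifies the sign-definite quantity it relies on, so no contradiction is ever derived. The fallback (an Adler--Moser type classification of $\PR$ followed by a frequency comparison) is likewise only a program, not a proof. Step 2 also has a sign slip. If all $w_j$ lie in $\{\Re z<0\}$, then for $\Re z>0$ the point $z$ is closer to $-\bar w_j$ than to $w_j$. Hence $|h|>1$ on the right half-plane, not $<1$.

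The missing idea is a maximum-principle argument: combine the normalization you already have with the single equation of \eqref{eq wj} at an extremal index. This is how the paper concludes. Choose $j$ with $a_j=\max_k a_k$. The mean of the $a_k$ is $-1$, so $-1\le a_j<0$. Now take the real part of the $j$-th equation of \eqref{eq wj}:
\[
\sum_{k\neq j}\frac{a_j-a_k}{|w_j-w_k|^2}-\sum_{k\neq j}\frac{a_j+a_k}{|w_j+\bar w_k|^2}-\frac{1}{2a_j}-\frac12=0 .
\]
Each term is controlled as follows:
\begin{itemize}
\item the first sum is $\ge 0$ by maximality of $a_j$;
\item each term of the second sum, with its minus sign, is $>0$ because $a_j+a_k<0$;
\item $-\frac{1}{2a_j}-\frac12\ge 0$ because $-1\le a_j<0$.
\end{itemize}
Since $p\ge 2$ (the root of $P_\infty$ is repeated), the second sum is nonempty. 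So the left side is strictly positive, a contradiction. Note that $p\ge2$ is essential. For $p=1$, $w_1=-1+ib$ does solve \eqref{eq wj}, which is why your averaged identities could never give the contradiction on their own. What does it are the cross terms $-(a_j+a_k)/|w_j+\bar w_k|^2$ at the extremal index.
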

\begin{proof}
    Suppose on the contrary that $a_j=\Re(w_j)\le 0$ for all $1\le j\le p$. Let $\PR(z)=\sum_{j=1}^p c_jz^j$ where $c_p=1$, then $\QR(z)=\sum_{j=1}^p (-1)^{p-j}\bar c_jz^j$. Then an easy computation shows that the coefficient of $z^{2p-2}$ in \eqref{eq PR QR} is
    \begin{equation*}
        -2p+c_{p-1}+\bar c_{p-1}=0.
    \end{equation*}
    This implies
    \begin{equation*}
        \sum_{j=1}^p a_j=-\Re(c_{p-1})=-p.
    \end{equation*}

    Without loss of generality, we may assume $a_1=\max\{a_j: 1\le j\le p\}$. Then $-1\le a_j\le 0$.
    Now the real part of \eqref{eq wj} for $j=1$ gives that
    \begin{equation*}
        \sum_{k=2}^p \frac{a_1-a_k}{|w_1-w_k|^2}-\sum_{k=2}^p \frac{a_1+a_k}{|w_1+\bar w_k|^2}-\frac{1}{2a_1}-\frac{1}{2}=0.
    \end{equation*}
However, $a_1-a_k\ge 0\ge a_1+a_k$ for all $k$, and $-\frac{1}{2a_1}-\frac{1}{2}\ge 0$, so the above equality cannot hold. This is a contradiction, and the proposition is proved.
\end{proof}

Now let $a_j=\Re(w_j)>0$. Then $r_j>1$ for $\beta$ large enough, so $z_j$ is outside the unit disk for $\beta$ large enough, which is a contradiction. Finally, by the above discussion and Proposition \ref{prop Im phi_j}, there is no solution $P(z)$ if $m\ge \beta=1+\alpha$. Thus we complete the proof of Theorem \ref{thm Phim_class}.

	\begin{remark}
		It is worth emphasizing that the proof of the classification theorem uses only two features of $P(z)$: its smooth dependence on the parameter $\beta$ and the relation
		\[
		|P_{\infty}(0)|=1.
		\]
		Thus the argument is largely independent of the explicit formula for $P(z)$. It would be interesting to revisit the classification problem from a more direct algebraic perspective and derive the theorem by exploiting the explicit structure of $P(z)$.
	\end{remark}
	
	\begin{remark}
		Another natural question is to classify all polynomials $P(z)$ satisfying \eqref{eq PR QR}. For small values of $p$, this problem can be approached by writing down the relations among the coefficients and solving them explicitly. In particular, no such polynomial exists when $p=2$. For $p=3$, one finds
		\[
		\PR(z)=(z+1+ti)^3-4+si, \quad t,s\in\mathbb R,
		\]
		whereas for $p=4$ one obtains
		\[
		\PR(z)=(z+2+ti)^3(z-2+ti), \quad t\in\mathbb R.
		\]
		These examples indicate that a complete classification of the solutions to \eqref{eq PR QR} may be within reach, possibly through a more systematic development of the algebraic ideas used above.
	\end{remark}
    
\section{Local uniqueness results}\label{sec unique}

After the classification results obtained in the previous section, the remaining task is to show that each admissible blow-up pattern gives rise to a unique branch of solutions (modulo rotation). In this section we gather all local-uniqueness results needed in the proof of the main theorem, while separating the analysis according to the nature of the blow-up profile.
		
		More precisely, we distinguish three cases. The first concerns the non-singular $m$-peak solutions. The second and third cases concern singular bubbling at the origin, depending on whether the singular source  is quantized ($\alpha\in\mathbb{N}$) or non-quantized ($\alpha\notin\mathbb{N}$). The overall philosophy is to establish the symmetry of the solutions first, but each case still requires a separate treatment because the underlying blow-up mechanisms are different.
		For this reason, we divide the section into three subsections and discuss each case separately.
        
        \subsection{Local uniqueness of $m$-peak bubble solutions}
        \begin{proposition}\label{prop uniqueness-1}
        Let $m<\al+1$, and let $u_n^{(1)}$ and $u_n^{(2)}$ be two sequences of $m$-peak bubble solutions concentrating at the same blow-up points. Then there exists a large $n_0>1$ such that 
            \[u_n^{(1)}\equiv u_n^{(2)} ,\quad\text{for}~n>n_0.\]
        \end{proposition}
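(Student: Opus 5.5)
The plan is the standard local uniqueness scheme for Liouville equations, in the spirit of Bartolucci--Jevnikar--Lee--Yang. We argue by contradiction. Suppose $u_n^{(1)}\not\equiv u_n^{(2)}$ along a subsequence, and set
\[
\xi_n=\frac{u_n^{(1)}-u_n^{(2)}}{\|u_n^{(1)}-u_n^{(2)}\|_{L^\infty(B)}} .
\]
Then $\xi_n$ solves
\[
\Delta\xi_n+\la_n|x|^{2\al}c_n(x)\xi_n=0 \ \text{in } B,\qquad \xi_n=0 \ \text{on } \pa B,
\]
where
\[
c_n=\int_0^1 e^{tu_n^{(1)}+(1-t)u_n^{(2)}}\rd t .
\]
From the sharp estimates for $m$-peak bubbles (Chen--Lin, and the refined expansions giving \eqref{energy-1}), the two solutions share the same profile near each $a_j$ up to $o(1)$ errors, so $c_n$ is asymptotically comparable to $e^{u_n^{(1)}}$. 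Near each blow-up point $a_j$, choose the local maximum $x_{n,j}$ and the scale $\ep_{n,j}$ by
\[
\la_n|x_{n,j}|^{2\al}e^{u_n^{(1)}(x_{n,j})}\ep_{n,j}^2=1 .
\]
The rescaled functions $\xi_n(x_{n,j}+\ep_{n,j}y)$ then converge in $C^2_{\loc}(\R^2)$ to a bounded solution of the linearized Liouville equation
\[
\Delta\xi+e^{U}\xi=0,\qquad U=\log\frac{8}{(1+|y|^2)^2} .
\]
By the classical nondegeneracy, the limit is
\[
\xi=b_{0,j}\frac{1-|y|^2}{1+|y|^2}+\sum_{i=1,2}b_{i,j}\frac{y_i}{1+|y|^2}.
\]
Away from $\{a_j\}$, $\xi_n$ converges to a function that is harmonic in $B\setminus\{a_1,\dots,a_m\}$ and vanishes on $\pa B$. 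It is given by Green representations, with coefficients controlled by the $b$'s.

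The first step is to show $b_{0,j}=0$ for all $j$. For this we use a Pohozaev-type identity for $\xi_n$ on small balls $B_\delta(x_{n,j})$, obtained by subtracting the identities for $u_n^{(1)}$ and $u_n^{(2)}$ and dividing by the norm. Together with the mass relation, this gives $\int_{B_\delta}\la_n|x|^{2\al}c_n\xi_n=o(1)$ at the appropriate order, which forces $b_{0,j}=0$. This is the standard argument, using that both solutions have the same $O(\la_n^2)$-accurate energy expansion.

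The second step is to use the translational Pohozaev identities, i.e. multiplication by $\pa_{x_i}$ on $B_\delta(x_{n,j})$. After subtracting and dividing, we get a linear system
\[
D^2\Phi_m(a_1,\dots,a_m)\,\mathbf b=0,\qquad \mathbf b=(b_{1,1},b_{2,1},\dots,b_{1,m},b_{2,m}),
\]
up to a positive weight per peak. The key input is then that the Hessian of $\Phi_m$ at the regular polygon of radius $r_{\al,m}$ given by Theorem \ref{thm Phim_class} has kernel spanned only by the rotation vector $(ia_1,\dots,ia_m)$. This is the circulant-matrix computation deferred to the appendix: $\Phi_m$ is invariant under the cyclic group, so $D^2\Phi_m$ block-diagonalizes under the discrete Fourier transform into $m$ two-by-two blocks, each of which can be checked to be invertible except for the zero-mode block, which carries the rotation. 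I expect this Hessian analysis, together with matching the orders in the Pohozaev expansions so that the $O(\ep_n)$ terms produce exactly $D^2\Phi_m\mathbf b$, to be the main obstacle.

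To kill the rotational kernel, we fix the normalization in the statement. Both sequences concentrate at the \emph{same} points, and the rotation can be used so that the maximum points of the two solutions near $a_1$ lie on the same ray. This removes the rotation direction and forces $\mathbf b=0$. Once all $b$'s vanish, $\xi_n\to0$ locally uniformly near each peak and also in the outer region. Finally, a maximum-principle or Green-representation argument upgrades this to $\|\xi_n\|_{L^\infty(B)}\to0$, contradicting $\|\xi_n\|_{L^\infty(B)}=1$.
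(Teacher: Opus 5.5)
Your outline follows the same Bartolucci--Jevnikar--Lee--Yang scheme as the paper. The problem is the step specific to this setting, namely removing the rotational zero mode of $D^2\Phi_m(\boldsymbol q)$: you only assert it, and the normalization you propose does not deliver it by itself. Since $\nabla u^{(1)}_n(x^{(1)}_{n,1})=0$, one has $\nabla\xi_{n,1}(0)=-\ep_n\nabla u^{(2)}_n(x^{(1)}_{n,1})/\|u^{(1)}_n-u^{(2)}_n\|_\infty\to(b_{1,1},b_{1,2})$. Placing $x^{(1)}_{n,1}$ and $x^{(2)}_{n,1}$ on a common ray does not make the tangential derivative of $u^{(2)}_n$ vanish at $x^{(1)}_{n,1}\neq x^{(2)}_{n,1}$. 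To get $b_{1,2}=0$ this way you would also need two facts you do not address:
\begin{itemize}
\item $|x^{(1)}_{n,1}-x^{(2)}_{n,1}|=O(\ep_n\|u^{(1)}_n-u^{(2)}_n\|_\infty)$;
\item smallness of the rescaled mixed derivative of $u^{(2)}_n$ along the segment joining the two maxima.
\end{itemize}
The paper sidesteps this with a symmetry lemma proved first. After rotating so that $x_{n,1}$ lies on the positive $x_1$-axis, it compares $u_n$ with its reflection $u_n(x_1,-x_2)$. This reflection is again an $m$-peak solution with the \emph{same} local maximum $x_{n,1}$, so $\nabla\xi_{n,1}(0)=0$ exactly, and $\boldsymbol b=c\boldsymbol v$ forces $\boldsymbol b=0$. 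Only afterwards are two different solutions compared. Both are then symmetric about the $x_1$-axis, so $\pa_{x_2}(u^{(1)}_n-u^{(2)}_n)(x^{(1)}_{n,1})=0$ exactly, which gives $b_{1,2}=0$ and hence $c=0$.

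The second missing ingredient is the location estimate for the maxima. The estimates you call standard are Theorems 2A and 2B of Bartolucci--Jevnikar--Lee--Yang. They underlie both $b_{0,j}=0$ and the passage from the translational Pohozaev identities to $D^2\Phi_m(\boldsymbol q)\boldsymbol b=0$, and they rely on $|\boldsymbol x_n-\boldsymbol q|=O(\mu_ne^{-\mu_n})$. There this bound comes from $D\Phi_m(\boldsymbol x_n)=O(\mu_ne^{-\mu_n})$ by inverting the Hessian, which is impossible here. The paper recovers it by fixing the rotation so that $x_{n,1}$ lies on the ray through $q_1$. In the discrete-Fourier coordinates $\boldsymbol s_n=A(\boldsymbol x_n-\boldsymbol q)^T$, this determines the zero-mode component $s_{n,2}$ from the others. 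The remaining system is invertible because $\det M_p=-4p^2(m-p)^2/\rho\neq0$ for $1\le p\le m-1$, and the surviving entry of $M_0$ is $-2(\beta^2-m^2)/\rho\neq0$. So the appendix computation is used twice, not only to identify $\ker D^2\Phi_m(\boldsymbol q)$. This same estimate also gives $|x^{(1)}_{n,1}-x^{(2)}_{n,1}|=o(\ep_n)$, which any repair of your direct comparison would need.
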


        To prove the local uniqueness of $m$-peak bubble solutions, we first need to understand their properties.  
		Let $u_n(x)$ be a sequence of $m$-peak bubble solutions. By Theorem \ref{thm Phim_class}, after applying a rotation, we may assume that the blow-up points of $u_n$ are 
        \begin{equation}\label{critical point}
            q_j=r_{\alpha,m}e^{i(j-1)\frac{2\pi}{m}}, \quad\forall~j=1,\cdots,m.
        \end{equation}
        As we discussed in the introduction, it is important to study the Hessian of $\Phi_m$ at $\boldsymbol{q}=(q_1,\cdots, q_m)$, which is expected to be degenerate due to the rotational symmetry. We have the following result:
		\begin{lemma}\label{lem evalue H}
			Let $H=D^2\Phi_m(\boldsymbol{q})$. Then $H$ is degenerate, and it has a unique eigenvector (up to a scalar) 
			\begin{equation*}
			\boldsymbol{v}=	(0,1,-\sin\vartheta_2, \cos\vartheta_2, \cdots, -\sin\vartheta_{m}, \cos\vartheta_{m})
			\end{equation*}
			corresponding to the zero eigenvalue, where $\vartheta_j=\f{2\pi}{m}(j-1)$, $1\le j\le m$.
		\end{lemma}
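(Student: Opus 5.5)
Degeneracy is immediate from rotational invariance. Since $\Phi_m(e^{it}z_1,\dots,e^{it}z_m)=\Phi_m(z_1,\dots,z_m)$ for every $t$, differentiating the identity $\nabla\Phi_m(e^{it}\boldsymbol q)=0$ at $t=0$ gives $H\,\frac{\rd}{\rd t}(e^{it}\boldsymbol q)|_{t=0}=0$. In real coordinates this tangent vector is $(-r\sin\vartheta_j, r\cos\vartheta_j)_{j}$ with $r=r_{\alpha,m}$, i.e.\ a multiple of $\boldsymbol v$. The real content of the lemma is that $\ker H$ is one-dimensional.

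To prove this I would exploit the $\mathbb Z_m$ symmetry of $\boldsymbol q$. Write a perturbation as $z_j=q_j(1+\xi_j)$ with $\xi_j=\rho_j+i\sigma_j$, so that the radial and angular directions are $\rho_j$ and $\sigma_j$. The cyclic shift $j\mapsto j+1$ combined with rotation by $2\pi/m$ commutes with $H$. Therefore, in the coordinates $(\rho_j,\sigma_j)_{j=1}^m$, $H$ is a block-circulant matrix with $2\times2$ blocks $A_{j-k}$, where $A_{d}$ depends only on $d=j-k \bmod m$. The discrete Fourier transform $\xi_j=\hat\xi\, e^{2\pi i k j/m}$ for $k=0,\dots,m-1$ block-diagonalizes it into $2\times 2$ Hermitian matrices $\hat H(k)=\sum_d A_d e^{2\pi i kd/m}$. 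It then suffices to show that $\det\hat H(k)\neq0$ for $k\ne0$, and that $\hat H(0)$ has exactly one zero eigenvalue. For $k=0$, the angular direction is the rotation, and the radial mode corresponds to $\frac{\rd^2}{\rd r^2}$ of the one-variable function $\Phi_m(re^{i\vartheta_1},\dots,re^{i\vartheta_m})$.

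The entries of $A_d$ are computed from the second derivatives of the four terms in \eqref{Phi complex}. Using $z_j=re^{i\vartheta_j}$, the pair terms involve only $|1-r^2e^{i\vartheta_d}|$ and $|1-e^{i\vartheta_d}|$. The sums $\sum_{d=1}^{m-1}\frac{e^{2\pi ikd/m}}{|1-e^{i\vartheta_d}|^2}$ and their $r$-analogues $\sum_{d=0}^{m-1}\frac{e^{2\pi ikd/m}}{1-r^2e^{i\vartheta_d}}$ have closed forms. The first is quadratic in $k$, namely $\frac{1}{12}(m^2-1)-\frac{k(m-k)}{2}$ up to normalization. The second follows by expanding in a geometric series in $r^2$, which gives expressions like $\frac{m r^{2k}}{1-r^{2m}}$. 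With the relation $r^{2m}=\frac{\beta-m}{\beta+m}$ ($\beta=\alpha+1$) from \eqref{def r al m}, each $\hat H(k)$ becomes an explicit $2\times2$ matrix in $k$, $m$, $\beta$ and $r^{2k}$. I expect these circulant computations to be what the appendix carries out.

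For $k=0$, I would check directly that $\frac{\rd^2}{\rd r^2}\Phi_m(r\boldsymbol q/r_{\alpha,m})<0$ at $r=r_{\alpha,m}$. The radial profile is $2m\alpha\log r+2m\log(1-r^2)+2m\log(1-r^{2m})+\text{const}$, which is strictly concave in $\log r$, so this second derivative is nonzero. The main obstacle is showing $\det\hat H(k)\neq0$ for $1\le k\le m-1$. The determinant is a function of $k$ that is symmetric under $k\mapsto m-k$, and I would try to factor it, using $x=r^{2k}\in(r^{2m},1)$ as the variable, into a product of terms with a definite sign. If it does not factor cleanly, my fallback is a convexity-type argument in the spirit of the $E_0$ argument of Section \ref{proof1.2_class}: the angular block is positive definite on $k\neq0$ modes, and one would show that the radial-angular coupling cannot cancel it.
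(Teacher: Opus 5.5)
Your reduction is the paper's. The $\mathbb Z_m$ symmetry makes the Hessian block-circulant; the paper uses polar coordinates $(r_j,\theta_j)$, your $z_j=q_j(1+\xi_j)$ is equivalent. The discrete Fourier transform then splits it into $2\times2$ Hermitian blocks, and the rotation accounts for the zero in the $k=0$ block. But the proposal stops exactly where the content of the lemma lies. Degeneracy is immediate from rotation invariance. What must be proved is that $\hat H(k)$ is nonsingular for every $1\le k\le m-1$, and for this you offer only the hope that $\det\hat H(k)$ factors into terms of definite sign.

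The paper actually does this computation. It evaluates the Fourier sums $R_p,S_p,T_p$ of $\sin^{-2}(\vartheta_{l+1}/2)$, $D_l^{-1}$, $D_l^{-2}$ in closed form. This gives $\mu_p=-\frac{(X_p+\hat a_p)(Y_p+\hat b_p)}{2\rho}$, $\nu_p=\frac{(X_p-\hat a_p)(Y_p-\hat b_p)}{2}$ and $\gamma_p=\frac{\hat a_pY_p-\hat b_pX_p}{2\sqrt\rho}$, where $X_p=(\beta+m)\rho^{m-p}$, $Y_p=(\beta+m)\rho^{p}$, $\hat a_p=\beta-m+2p$, $\hat b_p=\beta+m-2p$ and $\rho=r_{\alpha,m}^2$. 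It then uses $X_pY_p=(\beta+m)^2\rho^m=\beta^2-m^2$, which is where the specific radius enters, to get $\det M_p=-\frac{(X_pY_p-\hat a_p\hat b_p)^2}{4\rho}=-\frac{4p^2(m-p)^2}{\rho}\neq0$. Without this step, or an equivalent one, you have not shown that the kernel is one-dimensional.

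Your fallback does not close the gap as stated. Positivity of the angular entry $\nu_k$ for $k\neq0$ does not by itself exclude $\mu_k\nu_k=\gamma_k^2$. The missing ingredient is that the radial entry is negative, $\mu_k<0$ for all $k$. Once both signs are known, $\det\hat H(k)=\mu_k\nu_k-\gamma_k^2<0$ for $k\neq0$ regardless of the coupling. Equivalently, let $A$ be the negative definite radial block, $C$ the positive semidefinite angular block whose kernel is the constants, and $B$ the coupling. Then $Ax+By=0=B^{T}x+Cy$ forces $x^{T}Ax=y^{T}Cy$, hence $x=0$ and $y$ is constant.

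Both signs are true. They can be read off the factored formulas above: all factors of $\mu_p$ are positive, and $X_p<\hat a_p$, $Y_p<\hat b_p$ for $0<p<m$ by strict convexity of $p\mapsto\rho^{p}$. They can also be proved directly. The angular sign follows by an $E_0$-type argument, since $\partial_\phi^2\bigl(\log|1-\rho e^{i\phi}|-\log|1-e^{i\phi}|\bigr)>0$ on $(0,2\pi)$. The radial sign follows by expanding $\log|1-z_j\bar z_k|$ with $\log|1-\rho e^{i\phi}|=-\sum_{n\ge1}\rho^n\cos(n\phi)/n$ and using that the $q_j$ form a regular polygon. Your proposal establishes neither sign.

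Separately, your $k=0$ radial profile is wrong. Since $\prod_{l=1}^{m-1}|1-r^2e^{2\pi il/m}|=(1-r^{2m})/(1-r^2)$, and $-2\sum_{j\neq k}\log|z_j-z_k|$ contributes $-2m(m-1)\log r$ plus a constant, the correct profile is $2m(\beta-m)\log r+2m\log(1-r^{2m})+\mathrm{const}$. Its critical point is $r_{\alpha,m}$. Yours double-counts $\log(1-r^2)$; for $m=1$, for example, its critical point is at $r^2=\alpha/(\alpha+4)$. Your strict-concavity-in-$\log r$ conclusion survives the correction and gives $\mu_0=-2(\beta^2-m^2)/\rho$, matching the paper's $M_0$.
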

		The proof of Lemma \ref{lem evalue H} is a straightforward but lengthy computation, and we leave it to the appendix.

        \begin{lemma}\label{lemma symmetry-1}
        Let $u_n(x)$ be a sequence of $m$-peak bubble solutions. Then there exists a large $n_0>1$ such that for $n>n_0$, $u_n$ is symmetric with respect to the lines joining the origin $O$ and each local maximum point of $u_n$.
        \end{lemma}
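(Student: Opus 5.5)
After a rotation I will place one blow-up point at $q_1=r_{\alpha,m}$ on the positive $x_1$-axis. The plan is to show that $u_n$ is even in $x_2$ by comparing it with its reflection, and then to get the other symmetry lines from the first one. Write $x^*=(x_1,-x_2)$ and set $w_n(x)=u_n(x)-u_n(x^*)$. Since the equation \eqref{equ-0} and the disk are invariant under reflection, $u_n(x^*)$ is again an $m$-peak solution. Its blow-up points are the reflected polygon. Because the polygon $\{q_j\}$ is symmetric about the real axis, both solutions concentrate at the same points. The function $w_n$ is odd in $x_2$, vanishes on $\pa B$, and satisfies
\[
\Delta w_n+\la_n|x|^{2\al}c_n(x)w_n=0,\qquad c_n=\int_0^1 e^{tu_n(x)+(1-t)u_n(x^*)}\rd t .
\]
Arguing by contradiction, I assume $w_n\not\equiv0$ along a subsequence and normalize $\xi_n=w_n/\|w_n\|_{L^\infty(B)}$. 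The standard local-uniqueness blow-up analysis then applies, in the style of the arguments that Proposition \ref{prop uniqueness-1} will rely on.

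First I use the refined estimates for simple blow-up (\cite{Chen-Lin02,Chen-Lin10,Li-Li-Wei-25}, together with simplicity from \cite{Teresa-Wei-Zhang25}). Near each $q_j$, after rescaling by $\ep_{n}=e^{-u_n(x_{n,j})/2}$, the two solutions share the same bubble profile up to $O(\ep_n^2|\log\ep_n|)$ errors. Hence the rescaled $\xi_n$ converges locally to a bounded kernel element of the linearized Liouville operator:
\[
\xi_n(x_{n,j}+\ep_nz)\to b_{j,0}\frac{1-c|z|^2}{1+c|z|^2}+\sum_{i=1}^2 b_{j,i}\frac{z_i}{1+c|z|^2}.
\]
Away from the blow-up points, $\xi_n\to \sum_j b_{j,0}\cdot(\text{const})\,G(\cdot,q_j)$ plus derivative terms. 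The following steps are routine and I only indicate them. A Pohozaev-type identity on small balls around $q_j$, applied to the difference, shows that $b_{j,0}=0$. Next, the first-order Pohozaev identities yield the linear system $H\boldsymbol b=0$, where $H=D^2\Phi_m(\boldsymbol q)$ and $\boldsymbol b=(b_{1,1},b_{1,2},\dots,b_{m,1},b_{m,2})$. By Lemma \ref{lem evalue H}, $\boldsymbol b$ must then be a multiple of the rotation vector $\boldsymbol v$.

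This is the step I expect to be the main obstacle, and symmetry resolves it. The vector $\boldsymbol b$ inherits the oddness of $\xi_n$ under $x_2\mapsto -x_2$. The rotation field $\boldsymbol v$ is the tangential field $(-\sin\vartheta_j,\cos\vartheta_j)$. Under reflection this field is even: it maps to its own negative composed with the reflection, which works out to an even configuration. I have to verify this sign bookkeeping carefully. In particular, at $q_1$ the odd function $\xi_n$ forces $b_{1,1}=0$ but allows $b_{1,2}\ne0$, which exactly matches $\boldsymbol v$. So parity alone may not kill $\boldsymbol v$.

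I would therefore first try the following. Reflection conjugates the rotation generator $x_1\pa_2-x_2\pa_1$ to its negative. Since $\xi_n$ is odd, one checks that $\xi_n$ and the function $x_1\pa_2u_n-x_2\pa_1u_n$ (which is itself odd when $u_n$ is reflection-adapted) can coincide. So instead of relying on parity, I will use the freedom to choose the rotation. I pick the rotation so that a local maximum point $x_{n,1}$ of $u_n$ lies exactly on the positive real axis, rather than only having $q_1$ there. Then $\nabla u_n(x_{n,1})=0$ and $\nabla u_n(x_{n,1}^*)=0$ at the same point, which gives $\nabla w_n(x_{n,1})=0$. After rescaling, this forces $b_{1,2}=0$. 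Since $\boldsymbol b=c\boldsymbol v$ and $v_{1,2}=1$, we get $c=0$, so $\boldsymbol b=0$.

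With all $b_{j,i}=0$, $\xi_n\to0$ in $C^1_{\loc}$ near each peak and uniformly away from the peaks. The maximum principle on the remaining annular regions then contradicts $\|\xi_n\|_\infty=1$. Hence $u_n$ is symmetric about the line through $O$ and $x_{n,1}$. The local maxima near the other $q_j$ are handled by the same argument after rotating by $\vartheta_j$, so $u_n$ is symmetric about every line joining $O$ to a local maximum point.
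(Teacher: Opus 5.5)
\textbf{Gap: the location estimate for the peaks.} Your overall route is the paper's. You reflect across the axis through a local maximum, run the Bartolucci--Jevnikar--Lee--Yang blow-up analysis on the normalized difference, and get $b_{j,0}=0$ and $H\boldsymbol b=0$, hence $\boldsymbol b=c\boldsymbol v$ by Lemma~\ref{lem evalue H}. You then kill $c$ via $\nabla\xi_n(x_{n,1})=0$ once $x_{n,1}$ lies on the axis. Your parity discussion also correctly concludes that oddness alone cannot exclude $\boldsymbol v$.

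The gap is in a step you call routine: that near every $q_j$ the solutions $u_n$ and $u_n(x^*)$ ``share the same bubble profile up to $O(\ep_n^2|\log\ep_n|)$ errors''. The refined estimates you cite describe each solution around its own maximum point $x_{n,j}$ with its own height $\mu_{n,j}$. Comparing two solutions requires these parameters to agree to order $o(\ep_n)$, in fact at the rates of Theorems 2A--2B of \cite{Bartolucci-Jevnikar-Lee-Yang19-2}. Their Lemmas 3.2--3.3 (convergence of $\xi_{n,j}$ to a kernel element, $b_{j,0}=0$, $H\boldsymbol b=0$) rest on those rates.

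Near $q_j$, $j\ge2$, the two peaks are $x_{n,j}$ and $x^*_{n,m+2-j}$, since $q_j^*=q_{m+2-j}$. Their distance is of order $|\boldsymbol x_n-\boldsymbol q|$. In \cite{Bartolucci-Jevnikar-Lee-Yang19-2} this is controlled by inverting $D^2\Phi_m(\boldsymbol q)$ in $D\Phi_m(\boldsymbol x_n)=O(\mu_ne^{-\mu_n})$, i.e.\ by non-degeneracy, which fails here. Without further argument, the component of $\boldsymbol x_n-\boldsymbol q$ along $\boldsymbol v$ (a common angular drift of the peaks) is not controlled by the Taylor expansion and could be much larger than $\ep_n$. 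In that case the rescaled potential is not that of a single bubble, the limit of $\xi_{n,j}$ need not lie in the kernel, and the Pohozaev error terms are uncontrolled.

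\textbf{The missing step.} Use your normalization $x_{n,1}\in\{x_2=0,\ x_1>0\}$ already at this stage, to obtain a location estimate despite the degeneracy. Write $\boldsymbol x_n-\boldsymbol q=t_n\boldsymbol v+\boldsymbol w_n$ with $\boldsymbol w_n\perp\boldsymbol v$.
\begin{itemize}
\item Since $\ker H=\mathrm{span}\{\boldsymbol v\}$, $H$ is invertible on $\boldsymbol v^\perp$. The Taylor expansion then gives $|\boldsymbol w_n|\le C(t_n^2+|\boldsymbol w_n|^2+\mu_ne^{-\mu_n})$.
\item The second coordinate of $x_{n,1}-q_1$ vanishes, while the corresponding entry of $\boldsymbol v$ equals $1$. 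Hence $t_n$ equals minus the corresponding entry of $\boldsymbol w_n$, so $|t_n|\le|\boldsymbol w_n|$.
\item Since $t_n,\boldsymbol w_n\to0$, these yield $|\boldsymbol x_n-\boldsymbol q|=O(\mu_ne^{-\mu_n})=o(\ep_n)$.
\end{itemize}
This is precisely the extra step in the paper, carried out there through the block diagonalization of $H$ from the Appendix. After it, Theorems 2A--2B and Lemmas 3.2--3.3 of \cite{Bartolucci-Jevnikar-Lee-Yang19-2} apply, and the rest of your argument goes through as written.
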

        \begin{proof}
		Let $x_{n,j}$ be all local maximum points of $u_n$, i.e.,
            \[u_n(x_{n,j})=\max_{B_r(q_j)} u_n,\quad\forall~j=1,\cdots,m,\] 
        where $r>0$ is small. Let
            \[\mu_{n,j}=u_n(x_{n,j}),\quad \mu_n=\max_{1\le j\le m} \mu_{n,j}.\] 
        Write $\boldsymbol{x}_n=(x_{n,1},\cdots,x_{n,m})$. Then it is known that
		\begin{equation}\label{DPhi small}
			D \Phi_m(\boldsymbol{x}_n)=O(\mu_{n}e^{-\mu_{n}}).  
		\end{equation}
		By Taylor expansion and thet fact that $\boldsymbol{q}$ is critical point of $\Phi_m$, we have
        \begin{equation}\label{DPhi small-1}
            D^2\Phi_m(\boldsymbol{q})\cdot(\boldsymbol{x}_n-\boldsymbol{q})^T+O(|\boldsymbol{x}_n-\boldsymbol{q}|^2)=O(\mu_{n}e^{-\mu_{n}}).
        \end{equation}
        In general, since $H=D^2\Phi_m(\boldsymbol{q})$ is degenerate, we cannot deduce the estimate of $|\boldsymbol{x}_n-\boldsymbol{q}|$ directly.
	    To deal with this degeneracy, by taking a suitable rotation, we assume that $x_{n,1}$ lies on the positive $x_1$-axis, and hence in polar coordinates
		\begin{equation}\label{x-q perp v}
			x_{n,1}-q_1=(r_{n,1},0),\quad x_{n,j}-q_j=(r_{n,j},\theta_{n,j}),~\forall~j=2,\cdots,m.
		\end{equation}
		We claim that \eqref{DPhi small} implies $|\boldsymbol{x}_n-\boldsymbol{q}|=O(\mu_{n}e^{-\mu_{n}})$.
        By the discussion about the Hessian matrix in the Appendix, we get that
            \[D^2\Phi_m(\boldsymbol{q})=A^\star\cdot \text{Diag}(M_0,M_1,\cdots,M_{m-1})\cdot A,\]
        where $M_p$ are $2\times2$ complex matrices given by \eqref{Mp}, $A^\star$ is the conjugate transpose matrix of $A$, and $A$ is an invertible $2m\times2m$ complex matrix given by
            \[A=\f{1}{m}(F_m^\star\otimes E_2)=\f1m\left(\begin{matrix}
                A_{11} & A_{12} \\
                A_{21} & A_{22}
            \end{matrix}\right)\]
        with 
            \[A_{11}=\left(\begin{matrix}
                1 & 0\\
                0 & 1
            \end{matrix}\right), \quad A_{12}=A_{21}^T=\left(\begin{matrix}
                1 & 0 & \cdots & 1 & 0 \\
                0 & 1 & \cdots & 0 & 1
            \end{matrix}\right)_{2\times 2(m-1)},\]
        and $A_{22}$ is a $2(m-1)\times2(m-1)$ complex matrix. Moreover, we have
            \[A^{-1}=mA^\star=\left(\begin{matrix}
                A_{11} & A_{12} \\
                A_{21} & A_{22}^\star
            \end{matrix}\right).\]
        Let 
            \[\boldsymbol{s}_n=(s_{n,1},s_{n,2},\cdots,s_{n,2m})^T=A\cdot(\boldsymbol{x}_n-\boldsymbol{q})^T,\]
        and
            \[\boldsymbol{\ti s}_n=(s_{n,1},s_{n,3},\cdots,s_{n,2m})^T.\]
        Then by \eqref{x-q perp v}, we have
            \[0=(A^{-1}\cdot\boldsymbol{s}_n)_2=s_{n,2}+\sum_{j=2}^ms_{n,2j},\]
        so that
            \[|\boldsymbol{\ti s}_n|\simeq|\boldsymbol{s}_n|\simeq|\boldsymbol{x}_n-\boldsymbol{q}|.\]
        Thus by \eqref{DPhi small-1} and \eqref{M0}, we get that
             \[\text{Diag}\bigg(-\frac{2(\beta^2-m^2)}{\rho},0,M_1,\cdots,M_{m-1}\bigg)\cdot \boldsymbol{\ti s}_n=O(|\boldsymbol{\ti s}_n|^2)+O(\mu_{n}e^{-\mu_{n}}).\]
        which together with  \eqref{det Mp} gives $|\boldsymbol{\ti s}_n|=O(\mu_{n}e^{-\mu_{n}})$.
        Thus we have proved $|\boldsymbol{x}_n-\boldsymbol{q}|=O(\mu_{n}e^{-\mu_{n}})$.
        
		Now we study the symmetric property of $u_n$. 
        Since $x_{n,1}$ lies on the positive $x_1$-axis by assumption, we are going to show that $u_n$ is symmetric with respect to $x_1$-axis. 
        Let $u_{n,1}$ be the reflection of $u_n$  with respect to $x_1$-axis i.e., $u_{n,1}(x_1,x_2)=u_n(x_1,-x_2)$. 
        Then $u_{n,1}$ is also a $m$-peak bubble solution concentrating at $q_1,\cdots,q_m$. 
        To prove $u_n\equiv u_{n,1}$ for large $n$, we basically follow the strategy in \cite{Bartolucci-Jevnikar-Lee-Yang19-2}, where, under a non-degeneracy assumption, local uniqueness of multi-peak bubble solutions has been studied. 
        Here, although we have no non-degenerate condition, we also get the estimates of $|\boldsymbol{x}_n-\boldsymbol{q}|$, so that Theorem 2A, 2B in \cite{Bartolucci-Jevnikar-Lee-Yang19-2} still hold, and then we can follow step by step their approach. 
        For simplicity, we omit most details and only give some necessary computations without non-degenerate condition.
        Suppose by contradiction that $u_n\not\equiv u_{n,1}$, and let
		\begin{equation*}
			\xi_n(x)=\frac{u_n(x)-u_{n,1}(x)}{\|u_n-u_{n,1}\|_{\infty}}.
		\end{equation*}
		For each $j=1,\cdots,m$, we define the scaling functions $\xi_{n,j}(z):=\xi_n(e^{-\frac{\mu_{n,j}-\log 8}{2}}z+x_{n,j})$, then by Lemma 3.2 in \cite{Bartolucci-Jevnikar-Lee-Yang19-2}, there exist $b_{j,0}, b_{j,1}$ and $b_{j,2}$ such that
		\begin{equation*}
			\xi_{n,j}(z)\to \sum_{l=0}^2b_{j,l}\psi_l, \text{ in } C^2_{\loc}(\R^2),
		\end{equation*}
		where
		\begin{equation*}
			\psi_0(z)=\frac{1-|z|^2}{1+|z|^2},\quad \psi_1(z)=\frac{z_1}{1+|z|^2},\quad
			\psi_2(z)=\frac{z_2}{1+|z|^2}.
		\end{equation*}
		By Lemma 3.3 in \cite{Bartolucci-Jevnikar-Lee-Yang19-2}, we have 
            \[b_{j,0}=0,~\forall~j, \quad\text{and}\quad  D^2\Phi_m(\boldsymbol{q})\cdot\boldsymbol{b}=\boldsymbol{0},\]
        where $\boldsymbol{b}=(b_{1,1},b_{1,2},\cdots, b_{m,1},b_{m,2})$. 
        By Lemma \ref{lem evalue H}, $\boldsymbol{b}=c\boldsymbol{v}$ for some number $c$. On the other hand, since $x_{n,1}$ is a local maximum for both $u_n(x)$ and $u_{n,1}(x)$, we have $\nabla\xi_{n,1}(0)=0$, which implies $b_{1,1}=b_{1,2}=0$. Therefore $\boldsymbol{b}=0$, and the same argument in \cite[Page 457]{Bartolucci-Jevnikar-Lee-Yang19-2} shows that $\xi_n\equiv 0$, which is a contradiction to $\|\xi_n\|_\infty=1$.
        Thus we have finished the proof that $u_n\equiv u_{n,1}$ for large $n$.
		\end{proof}

        \begin{proof}[Proof of Proposition \ref{prop uniqueness-1}]
		Applying Lemma \ref{lemma symmetry-1}, we find that $x_{n,j}$ must be the vertices of a regular $m$-polygon. Moreover, the only configuration of $\boldsymbol{x}_n$ such that \eqref{x-q perp v} holds is the case that $O, q_j, x_{n,j}$ are on the same line for all $j$. Now let $u_n^{(1)}$ and $u_n^{(2)}$ be two $m$-peak solutions such that their local maximum points satisfy this configuration. By symmetry,
		\begin{equation*}
			\partial_{x_2}(u_n^{(1)}-u_n^{(2)})\Big|_{x=x_{n,1}^{(1)}}=0.
		\end{equation*}
		Then the same argument as in the proof of Lemma \ref{lemma symmetry-1} shows that $u_n^{(1)}\equiv u_n^{(2)}$ for large $n$, and the local uniqueness is proved.
        \end{proof}

        \vskip0.1in
        \subsection{Local uniqueness of non-quantized singular bubble solution}
        \begin{proposition}\label{prop uniqueness-2}
            Let $\al\not\in\N$, and let $u_n$ be a sequence of non-quantized singular bubble solutions of \eqref{equ-2}. Then there exists a large $n_0>1$ such that 
              \begin{equation}\label{un sol non-quantized}
    u_n(x)=\log\f{\Lambda_{n,2}}{\sbr{1+\f{\Lambda_{n,2}}{8(1+\al)^2}|x|^{2(1+\al)}}^2}-\log\la_n,\quad\text{for}~n>n_0,        
        \end{equation}
            where $\Lambda_{n,2}$ is the larger solution of
                \[\f{\Lambda}{\sbr{1+\f{\Lambda}{8(1+\al)^2}}^2}=\la_n.\]
        \end{proposition}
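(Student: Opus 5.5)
The plan is to show that $u_n$ must be radial for $n$ large. Once that is known, the conclusion is an ODE fact. A radial solution of \eqref{equ-0} solves the Liouville ODE $-(ru')'=\la r^{1+2\al}e^{u}$. Writing $s=r^{1+\al}$ converts it to the regular radial Liouville equation, so every radial solution has the form $\log\frac{\Lambda}{(1+\frac{\Lambda}{8(1+\al)^2}|x|^{2(1+\al)})^2}-\log\la$. The boundary condition $u=0$ on $\pa B$ forces $\Lambda/(1+\Lambda/(8(1+\al)^2))^2=\la_n$, which leaves only $\Lambda_{n,1}$ and $\Lambda_{n,2}$. The $\Lambda_{n,1}$ branch has mass tending to $0$, so a singular bubble must be the $\Lambda_{n,2}$ branch, which is \eqref{un sol non-quantized}. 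The whole task is therefore radial symmetry.

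To get radial symmetry, I would use the rotation/linearization argument familiar from local uniqueness proofs. Fix a rotation angle $\theta$ and let $u_n^\theta(x)=u_n(e^{i\theta}x)$. This is again a singular bubble solution of \eqref{equ-2}. Suppose $u_n\not\equiv u_n^\theta$ along a subsequence, and set $\xi_n=(u_n-u_n^\theta)/\|u_n-u_n^\theta\|_\infty$. Then $\xi_n$ solves $\Delta\xi_n+\la_n|x|^{2\al}c_n\xi_n=0$ with $\xi_n=0$ on $\pa B$, where $c_n=\int_0^1e^{tu_n+(1-t)u_n^\theta}\rd t$. It is simpler to work with the infinitesimal generator $\phi_n=x_1\pa_{x_2}u_n-x_2\pa_{x_1}u_n$ directly: it solves the linearized equation \eqref{eq phi} and vanishes on $\pa B$. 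Radiality is equivalent to $\phi_n\equiv0$, so I argue by contradiction assuming $\phi_n\not\equiv0$ and normalizing $\|\phi_n\|_\infty=1$. Rescale at the singular bubble with $\ep_n^{2(1+\al)}=e^{-\mu_n}$, where $\mu_n=\max u_n$ (attained near $0$ by the non-quantized simple blow-up theory). The rescaled $\tilde\phi_n(y)=\phi_n(\ep_ny)$ then converges in $C^2_{\loc}(\R^2)$ to a bounded solution of
\[\Delta\phi+|y|^{2\al}e^{U_\al}\phi=0,\qquad U_\al(y)=\log\frac{8(1+\al)^2}{(1+|y|^{2(1+\al)})^2}.\]
For $\al\notin\N$, the known classification of bounded kernel elements of this operator gives only the radial element $\frac{1-|y|^{2(1+\al)}}{1+|y|^{2(1+\al)}}$; the translation modes disappear because the angular modes $\cos(k\theta)$ need $k=1+\al$, which is not an integer. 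The limit $\phi$ is also a generator of rotations, so every angular Fourier mode of index $0$ is absent. The radial component is killed as well, so $\phi\equiv0$ on compact sets.

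Next I must rule out the loss of $\|\phi_n\|_\infty=1$ in the intermediate and outer regions. Away from the origin, $\la_n|x|^{2\al}e^{u_n}\to0$ uniformly with the refined bound $O(\la_n)$ coming from \eqref{energy-1}. A Green representation $\phi_n(x)=\int_BG(x,y)\la_n|y|^{2\al}e^{u_n}\phi_n\,\rd y$ then shows that $\phi_n\to0$ uniformly outside small balls $B_{R\ep_n}(0)$. For the neck region, I would decompose $\phi_n$ into angular modes $e^{ik\theta}$ with $k\ne0$. For each mode the potential contributes mass decaying like $|y|^{-2(1+\al)-2}$ after rescaling, so the ODE solution of mode $k$ behaves like $r^{\pm|k|}$. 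Boundedness together with the vanishing at the inner scale forces smallness. I expect this uniform neck estimate, which amounts to showing that the maximum of $|\phi_n|$ cannot sit at an intermediate scale, to be the main obstacle. I would handle it by a maximum-principle comparison for each Fourier mode, or equivalently by using Pohozaev-type identities for $\phi_n$ tested against $x\cdot\nabla u_n$ on $\pa B_r$. Either route gives $\|\phi_n\|_\infty=o(1)$, a contradiction.

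Hence $\phi_n\equiv0$ for $n$ large, $u_n$ is radial, and the ODE classification above yields \eqref{un sol non-quantized}.
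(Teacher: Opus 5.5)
Your reduction to radial symmetry, the ODE identification of the $\Lambda_{n,2}$ branch, and the inner and outer blow-up analysis of $\phi_n=\pa_\theta u_n$ are sound. Using $\int_0^{2\pi}\phi_n(re^{i\theta})\,\rd\theta=\int_0^{2\pi}\pa_\theta u_n\,\rd\theta=0$ to kill the radial kernel element is a nice shortcut; the paper instead first proves reflection symmetry (Lemma \ref{lemma symmetry-2}) so that $\pa_\theta u_n$ is odd.

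However, there is a genuine gap at the step you yourself call the main obstacle. You must exclude that the maximum point $x_n$ of $|\phi_n|$ lies in the neck $\varepsilon_n\ll|x_n|\ll1$, and this is not proved. The sketch also does not work as written. The angular modes $\phi_{n,k}$ do not satisfy decoupled ODEs with solutions $r^{\pm|k|}$, because the potential $\la_n|x|^{2\al}e^{u_n}$ is not radial; that is precisely what is being proved, so every mode is forced by all the others. Even granting mode-wise bounds, you would still have to sum them over $k$ to control $\|\phi_n\|_\infty$. The Pohozaev alternative is not explained and gives no evident pointwise control in the neck. Two smaller points: away from the origin you only get $\phi_n\to0$ on compact subsets of $\ov B\setminus\{0\}$, not outside $B_{R\varepsilon_n}(0)$. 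And the $O(\la_n)$ bound there comes from \eqref{out un-1}, not from \eqref{energy-1}.

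The missing idea is the Green-representation comparison used in the paper. Put $y_n=x_n/\varepsilon_n$, so that $|y_n|\to\infty$, $\varepsilon_n|y_n|\to0$ and $|\tilde\phi_n(y_n)|=\|\phi_n\|_\infty=1$. Write $\tilde\phi_n(y)=\int_{B_{1/\varepsilon_n}(0)}G_n(y,x)|x|^{2\al}\tilde c_n(x)\tilde\phi_n(x)\,\rd x$, with $G_n$ as in the proof of Lemma \ref{lemma symmetry-2} and $\tilde c_n(x)=\varepsilon_n^{2(1+\al)}\la_n e^{u_n(\varepsilon_n x)}\le C(1+|x|^{4(1+\al)})^{-1}$.

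The regular part of $G_n$ is $o(1)$ at $y_n$ and at every rotated point $e^{i\theta}y_n$. The splitting in \eqref{3-20-3}--\eqref{3-20-5} gives $\int\bigl|\log\frac{|y_n-x|}{|e^{i\theta}y_n-x|}\bigr|\frac{|x|^{2\al}}{1+|x|^{4(1+\al)}}\,\rd x=o(1)$, uniformly in $\theta$. Hence $\tilde\phi_n(e^{i\theta}y_n)=\tilde\phi_n(y_n)+o(1)$ uniformly in $\theta$. Averaging over $\theta$ and using your zero-mean identity gives $0=\tilde\phi_n(y_n)+o(1)=\pm1+o(1)$, a contradiction.

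The paper runs the same argument with the single pair $y_n,\bar y_n$ and oddness under reflection. With the rotational average, your route closes and even dispenses with Lemma \ref{lemma symmetry-2}.
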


        In \cite{Bartolucci-Jevnikar-Lee-Yang20}, under the assumption that the singular point $0$ is a non-degenerate critical point of some Kirchhoff-Routh type function, the local uniqueness was established. However, it is easy to check that for the unit disk, this non-degeneracy no longer holds, and hence we do not adopt the traditional approach in \cite{Bartolucci-Jevnikar-Lee-Yang20}. 
        %In \cite{Li-Li-Wei-25}, the authors have gave some refined estimates for singular bubbling solution, and we believe that these refined estimates can be used to remove the assumption in \cite{Bartolucci-Jevnikar-Lee-Yang20}, but here we give a simpler proof in the unit disk.
        Thanks to the symmetry of the disk, we prove the axial symmetry property of non-quantized singular bubble solutions, and then study the radial symmetry to get local uniqueness.
        
        We first list some properties. The following results are standard and can be found, for example, in \cite[Section 2]{Bartolucci-Jevnikar-Lee-Yang20}.
        Let $u_n$ be a sequence of non-quantized singular bubble solutions of \eqref{equ-2}. 
        Denote
            \[\rho_n=\la_n\int_{B}|x|^{2\al}e^{u_n}\rd x.\]
        Then $\rho_n\to8\pi(1+\al)$ and $u_n$ satisfies the following mean-field equation
            \[\Delta u_n+\rho_n\f{|x|^{2\al}e^{u_n}}{\int_{B}|x|^{2\al}e^{u_n}\rd x}=0.\]
        By (2.5)-(2.13) of \cite{Bartolucci-Jevnikar-Lee-Yang20}, we immediately get 
        \begin{equation}\label{in un-1}
            u_n(x)+\log\la_n=\log\f{e^{\mu_n}}{\sbr{1+\f{e^{\mu_n}}{8(1+\al)^2}|x|^{2(1+\al)}}^2}+o(1),\quad\forall~x\in B_{r_0}(0),
        \end{equation}
        where $r_0>0$ is a small fixed constant, and 
            \[\mu_n=\max_B u_n+\log\la_n\to+\iy.\]
        Moreover, we have
            \[\mu_n+\log\la_n=O(1),\]
        and
        \begin{equation}\label{out un-1}
            u_n(x)=-4(1+\al)\log|x|+o(1),\quad\text{in}~C_\loc^1(\ov B\setminus \{0\}).
        \end{equation}

        \begin{lemma}\label{lemma symmetry-2}
        Let $u_n$ be a sequence of non-quantized singular bubble solutions of \eqref{equ-2}. Then there exists a large $n_0>1$ such that $u_n$ is symmetric with respect to $x_1$-axis for $n>n_0$. 
        \end{lemma}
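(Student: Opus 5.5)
We argue by contradiction, in the same spirit as the proof of Lemma \ref{lemma symmetry-1}. Let $u_{n,1}(x_1,x_2)=u_n(x_1,-x_2)$ be the reflection of $u_n$ across the $x_1$-axis. Since the disk and the weight $|x|^{2\al}$ are invariant under this reflection, $u_{n,1}$ is again a non-quantized singular bubble solution of \eqref{equ-2} with the same $\la_n$. Suppose $u_n\not\equiv u_{n,1}$ along a subsequence, and set
\[
\xi_n=\frac{u_n-u_{n,1}}{\|u_n-u_{n,1}\|_\infty}.
\]
Then $\xi_n$ is odd in $x_2$ and satisfies
\[
\Delta\xi_n+\la_n|x|^{2\al}c_n(x)\xi_n=0 \ \text{in } B,\qquad \xi_n=0 \ \text{on } \pa B,
\qquad c_n=\int_0^1 e^{tu_n+(1-t)u_{n,1}}\,\rd t .
\]
By \eqref{in un-1}, both $u_n$ and $u_{n,1}$ equal the same radial profile up to $o(1)$ near $0$. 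So $c_n$ is comparable to $e^{u_n}$, and after rescaling it converges to the radial singular bubble.

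\textbf{Step 1: the blow-up limit.} Set $\ep_n=e^{-\mu_n/(2(1+\al))}$ and $\tilde\xi_n(z)=\xi_n(\ep_n z)$. We will show that $\tilde\xi_n\to\xi_0$ in $C^2_\loc(\R^2)$, where $\xi_0$ is a bounded solution of
\[
\Delta\xi_0+\frac{|z|^{2\al}e^{0}}{\sbr{1+\frac{|z|^{2(1+\al)}}{8(1+\al)^2}}^2}\,\xi_0=0
\]
(up to normalization). For $\al\notin\N$, the bounded kernel of this linearized singular Liouville operator is spanned only by the radial function
\[
\frac{8(1+\al)^2-|z|^{2(1+\al)}}{8(1+\al)^2+|z|^{2(1+\al)}}.
\]
The angular modes $e^{ik\theta}$, $k\ge1$, would require $k=1+\al$, which is impossible since $\al\notin\N$; this is the point where non-quantization enters. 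Since $\tilde\xi_n$ is odd in $z_2$, $\xi_0$ must be odd as well, while every kernel element is radial. Hence $\xi_0\equiv0$.

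\textbf{Step 2: control away from the origin.} Using \eqref{out un-1}, we have $\la_n|x|^{2\al}c_n\to0$ uniformly on compact subsets of $\ov B\setminus\{0\}$, so $\xi_n\to\xi_\infty$ there with $\Delta\xi_\infty=0$ in $B\setminus\{0\}$ and $\xi_\infty=0$ on $\pa B$. To rule out a point singularity of $\xi_\infty$ at $0$, we use the Green representation
\[
\xi_n(x)=\int_B G(x,y)\,\la_n|y|^{2\al}c_n\xi_n\,\rd y .
\]
Expanding $G(x,\cdot)$ at $0$, the zeroth-order term vanishes because $\xi_n$ is odd. The first-moment terms are controlled by the decay $|\xi_n|\le C$ together with $\la_n|y|^{2\al}c_n\sim \ep_n^{-2}\big(1+|y/\ep_n|\big)^{-4-2\al}|y/\ep_n|^{2\al}$. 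This yields $\xi_\infty\equiv0$.

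\textbf{Step 3: closing the contradiction.} We need $\|\xi_n\|_\infty\to0$, contradicting $\|\xi_n\|_\infty=1$. The hardest regime is the intermediate annulus $R\ep_n\le|x|\le r_0$. There we will apply a maximum-principle or Green-function argument: on this annulus the potential $\la_n|x|^{2\al}c_n$ is bounded by $C\ep_n^{2+2\al}|x|^{-4-2\al}$, which is integrable with small mass outside $B_{R\ep_n}$. Combining this with $o(1)$ boundary values on both ends of the annulus (from Steps 1 and 2), we expect the representation formula to give $\sup|\xi_n|=o(1)$. We anticipate that the main obstacle is this uniform estimate, in particular quantifying the smallness of $u_n-u_{n,1}$ in the transition region. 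Once it is in place, the contradiction shows $u_n\equiv u_{n,1}$ for large $n$.
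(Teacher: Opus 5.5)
Your Steps 1 and 2 are correct. Together they amount to the paper's claim \eqref{3-20-1}: the normalized difference vanishes at the bubble scale, and it vanishes on compact subsets of $\ov B\setminus\{0\}$. At the bubble scale, non-quantization leaves only the radial kernel element, which oddness kills. Away from the origin the Green expansion is unnecessary, since $|\xi_\infty|\le1$ already makes the singularity at $0$ removable.

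\textbf{The gap is Step 3.} This is the heart of the lemma, and you only announce it ("we expect"). The justification you give is also insufficient. Small $L^1$ mass of the potential on the neck $A_n=\{R\ep_n\le|x|\le r_0\}$ does not by itself give an $L^\infty$ bound: the modulus of $A_n$ tends to infinity, and its Dirichlet Green function $G_{A_n}$ can be of order $\log\frac1{\ep_n}$ in the middle of the neck. What is needed is the weighted estimate
\[
\sup_{x\in A_n}\int_{A_n}G_{A_n}(x,y)\,\la_n|y|^{2\al}c_n(y)\,\rd y\le CR^{-2-2\al}.
\]
It follows by three steps:
\begin{itemize}
\item dominate $G_{A_n}$ by the Green function of $\R^2\setminus B_{R\ep_n}$, which is at most $\frac1{2\pi}\log\frac{2|x||y|}{R\ep_n|x-y|}$;
\item integrate against the bound $C\ep_n^{2+2\al}|y|^{-4-2\al}$ coming from \eqref{in un-1}, after rescaling by $\ep_n$;
\item split the integral into the regions $|y|<|x|/2$, $|y|>2|x|$ and $|y|\sim|x|$.
\end{itemize}
Then write $\xi_n=h_n+v_n$ on $A_n$, with $h_n$ harmonic carrying the $o(1)$ boundary data from Steps 1--2. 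The maximum principle for $h_n$ and the estimate above for $v_n$ give $\sup_{A_n}|\xi_n|\le o(1)+CR^{-2-2\al}$. Hence $1=\|\xi_n\|_\infty\le o(1)+CR^{-2-2\al}$, a contradiction for $R$ large. With this supplied, your route is complete and uses oddness only once.

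The paper closes the argument differently and avoids any uniform neck estimate. Once \eqref{3-20-1} places the maximum point $x_n$ of $|w_n|$ in the neck, it evaluates the Green representation of $\ti w_n$ at $y_n=x_n/\ep_n$ and at the mirror point $\bar y_n$. Since $|y_n|\to\infty$ while the rescaled potential is bounded by $C|x|^{2\al}(1+|x|^{4(1+\al)})^{-1}$, the integral of $\bigl|\log\frac{|y_n-x|}{|\bar y_n-x|}\bigr|$ against it is $o(1)$. So $\ti w_n(\bar y_n)=\ti w_n(y_n)+o(N_n)$, which contradicts $\ti w_n(\bar y_n)=-\ti w_n(y_n)=\mp N_n$. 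This uses oddness a second time, but needs only a pointwise Green-difference estimate at one far-away point rather than a bound over the whole neck.
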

        \begin{proof}
        Let $u_{n,1}$ be the reflection of $u_n$ with respect to $x_1$-axis i.e. $u_{n,1}(x_1,x_2)=u_n(x_1,-x_2)$.
        Suppose by contradiction that $u_n\not\equiv u_{n,1}$ for each $n$. Let
            \[w_n=u_n-u_{n,1}\not\equiv0, \quad\text{in}~B.\]
        It is easy to check that
            \[\Delta w_n+|x|^{2\al}c_nw_n=0~~\text{in}~B,\quad w_n=0~~\text{on}~\pa B,\]
        where  
            \[c_n=\la_n\f{e^{u_n}-e^{u_{n,1}}}{u_n-u_{n,1}}.\]
        Set 
            \[N_n=|w_n(x_n)|=\max_B |w_n|>0.\]
        We claim that
        \begin{equation}\label{3-20-1}
            |x_n|\to0\quad\text{and} \quad |x_n|e^{\f{\mu_n}{2(1+\al)}}\to+\iy.
        \end{equation}
        By \eqref{out un-1}, we have $c_n=O(\la_n)$ uniformly in $C_\loc^1(\ov B\setminus \{0\})$, and hence we get that $\f{w_n}{N_n}\to w$ in $C_\loc(\ov B\setminus \{0\})$ with
            \[\Delta w=0~~\text{in}~B,\quad |w|\le 1~~\text{in}~B, \quad w=0~~\text{on}~\pa B.\]
        Thus $w=0$ and $x_n\to0$. Let $\ep_n=e^{-\f{\mu_n}{2(1+\al)}}\to0$ and $\ti w_n(x)=w_n(\ep_nx)$ for $x\in B_{\f{1}{\ep_n}}(0)$.
        Then we have
            \[\Delta\ti w_n+|x|^{2\al}\ti c_n\ti w_n=0,\quad\text{in}~B_{\f{1}{\ep_n}}(0),\]
        where by \eqref{in un-1} we have
            \[\ti c_n(x)=\ep_n^{2(1+\al)}c_n(\ep_nx)=\f{1+o(1)}{\sbr{1+\f{1}{8(1+\al)^2}|x|^{2(1+\al)}}^2},\quad\text{in}~B_{\f{r_0}{\ep_n}}(0).\]
        Thus we get $\f{\ti w_n}{N_n}\to\ti w=a\f{8(1+\al)^2-|x|^{2(1+\al)}}{8(1+\al)^2+|x|^{2(1+\al)}}$. 
        Since $\ti w_n(0)=0$, we get $\ti w=0$ and hence prove \eqref{3-20-1}.

        Now we use \eqref{3-20-1} to get a contradiction. 
        By Green's representation formula, we have that for any $y\in B_{\f{1}{\ep_n}}(0)$,
            \[\ti w_n(y)=\int_{B_{\f{1}{\ep_n}}(0)}G_n(y,x)|x|^{2\al}\ti b_n(x)\ti w_n(x)\rd x,\]
        where $G_n(y,x)$ is the Green's function 
            \[G_n(y,x)=-\f{1}{2\pi}\log|\ep_n(y-x)|+\f{1}{2\pi}\log\abs{\ep_n^2|y|x-\f{y}{|y|}}.\]
        Set $y_n=\f{x_n}{\ep_n}$ and $\bar y_n=(y_{n,1},-y_{n,2})$, then \eqref{3-20-1} implies $|y_n|\to+\iy$ and $\ep_n|y_n|=o(1)$.
        Since 
            \[G_n(\bar y_n,x)-G_n(y_n,x)=\f{1}{2\pi}\log\f{|y_n-x|}{|\bar y_n-x|}+o(1),\quad\text{uniformly for}~x\in B_{\f{1}{\ep_n}}(0),\]
        and by \eqref{in un-1}-\eqref{out un-1}
            \[\abs{\ti c(x)}\le\f{C}{1+|x|^{4(1+\al)}}\quad\text{uniformly for}~x\in B_{\f{1}{\ep_n}}(0),\]
        we get that
        \begin{equation}\label{3-20-6}
            \abs{\ti w_n(\bar y_n)-\ti w_n(y_n)}\le CN_n\int_{B_{\f{1}{\ep_n}}(0)}\abs{\log\f{|y_n-x|}{|\bar y_n-x|}}\f{|x|^{2\al}}{1+|x|^{4(1+\al)}}\rd x+o(N_n).
        \end{equation}
        We show that
        \begin{equation}\label{3-20-2}
            \int_{B_{\f{1}{\ep_n}}(0)}\abs{\log\f{|y_n-x|}{|\bar y_n-x|}}\f{|x|^{2\al}}{1+|x|^{4(1+\al)}}\rd x=o(1).
        \end{equation}
        Let 
            \[\Omega_n=B_{\f{1}{\ep_n}}(0)\setminus\sbr{B_{\f12|y_n|}(y_n)\cup B_{\f12|y_n|}(\bar y_n)}.\]
        For some fixed large $R>1$, since for any $x\in\Omega_n\cap B_{R|y_n|}(0)$,
            \[\f12\le\abs{\f{y_n}{|y_n|}-\f{z_n}{|y_n|}}\le R+1, \]
        we get 
            \[\abs{\log\f{|y_n-x|}{|y_n|}}=O(\f{|x|}{|y_n|}),\quad\text{for}~x\in\Omega_n\cap B_{R|y_n|}(0),\]
        which gives 
        \begin{equation}\label{3-20-3}
             \abs{\log\f{|y_n-x|}{|\bar y_n-x|}}=O\sbr{\f{|x|}{|y_n|}}\quad\text{for}~x\in\Omega_n\cap B_{R|y_n|}(0).
        \end{equation}
        On the other hand, for any $x\in\Omega_n\setminus B_{R|y_n|}(0)$,
            \[\abs{\f{|y_n-x|}{|\bar y_n-x|}-1}=\abs{\f{4x_2y_{n,2}}{|x-\bar y_n|(|x-y_n|+|x-\bar y_n|)}}\le\f{2|y_n|}{(1-\f1R)^2|x|}\le\f12,\]
        we get 
        \begin{equation}\label{3-20-4}
            \abs{\log\f{|y_n-x|}{|\bar y_n-x|}}=O\sbr{\f{|y_n|}{|x|}}\quad\text{for}~x\in\Omega_n\setminus B_{R|y_n|}(0).
        \end{equation}
        By \eqref{3-20-3} and \eqref{3-20-4}, we immediately get that
        \begin{equation}\label{3-20-5}
            \int_{\Omega_n}\abs{\log\f{|y_n-x|}{|\bar y_n-x|}}\f{|x|^{2\al}}{1+|x|^{4(1+\al)}}\rd x=O\sbr{\f{1}{|y_n|}}=o(1).
        \end{equation}
        Finally, we have
            \[\begin{aligned}   
            \int_{B_{\f12|y_n|}(y_n)}\abs{\log\f{|y_n-x|}{|\bar y_n-x|}}\f{|x|^{2\al}}{1+|x|^{4(1+\al)}}\rd x &\le\f{C}{|y_n|^{4+2\al}}\int_{B_{3|y_n|}(0)}|\log|x||\rd x\\
            &=O\sbr{\f{\log|y_n|}{|y_n|^{2(1+\al)}}}=o(1),                 
            \end{aligned}\]
        and similarly the integral in $B_{\f12|y_n|}(\bar y_n)$ is also $o(1)$. 
        This together with \eqref{3-20-5} gives \eqref{3-20-2}.
        By \eqref{3-20-6} and \eqref{3-20-2} we get
            \[\ti w_n(\bar y_n)=\ti w_n(y_n)+o(N_n).\]
        However, by the definition of $w_n$ we have
            \[0=\ti w_n(y_n)+\ti w_n(\bar y_n)=\pm2N_n+o(N_n),\]
        which is in contradiction with $N_n>0$. Hence the lemma follows.
        \end{proof}
        
    \begin{proof}[Proof of Proposition \ref{prop uniqueness-2}]
    We prove that $u_n$ is radial symmetric for large $n$. 
    Let 
        \[w_n(x)=\pa_\theta u_n=-x_2\pa_{x_1}u_n+x_1\pa_{x_2}u_n,\quad\text{in}~B.\]
    Then since $u_n$ is smooth, we get that
        \[\Delta w_n+\lambda_n|x|^{2\al}e^{u_n}w_n=0~~\text{in}~B,\quad w_n=0~~\quad\text{on}~\pa B.\]
    Moreover, by Lemma \ref{lemma symmetry-2} $u_n(x_1,x_2)=u_n(x_1,-x_2)$ for large $n$, we get that
        \[w_n(x_1,-x_2)+w_n(x_1,x_2)=0.\]
    Therefore, by repeating the approach in the proof of Lemma \ref{lemma symmetry-2}, we get that $w_n\equiv0$ for large $n$, which means that $u_n$ is radial symmetric. By ODE methods, we get the desired uniqueness.
    \end{proof}

    \vskip0.1in
    \subsection{Local uniqueness of quantized singular bubble solutions}
    \begin{proposition}\label{prop uniqueness-3}
        Let $\al\in\N$, and let $u_n$ be a sequence of quantized singular bubble solutions of \eqref{equ-2}. Then there exists a large $n_0>1$ such that 
        \begin{equation}\label{un sol quantized}
    u_n(x)=\log\f{\Lambda_{n,2}}{\sbr{1+\f{\Lambda_{n,2}}{8(1+\al)^2}|x|^{2(1+\al)}}^2}-\log\la_n,\quad\text{for}~n>n_0,        
        \end{equation}
            where $\Lambda_{n,2}$ is the larger solution of
            \[\f{\Lambda}{\sbr{1+\f{\Lambda}{8(1+\al)^2}}^2}=\la_n.\]
    \end{proposition}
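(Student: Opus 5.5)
The goal is to show that $u_n$ is radially symmetric for large $n$; the ODE analysis then identifies $u_n$ with the larger radial solution $u_{\la_n,2}$, exactly as in the proof of Proposition \ref{prop uniqueness-2}. The difference from the non-quantized case is that for $\al\in\N$ the blow-up profile near $0$ need not be radial a priori. Non-simple blow-up is excluded by \cite{Teresa-Wei-Zhang25}, but the limiting bubble may be a shifted one,
\[
U_{\xi}(y)=\log\frac{8(1+\al)^2}{\sbr{1+|y^{1+\al}-\xi|^2}^2},
\]
with $\xi\in\C$. Correspondingly, the kernel of the linearized operator $\Delta+|y|^{2\al}e^{U}$ at the radial bubble has dimension three:
\[
\frac{1-|y|^{2(1+\al)}}{1+|y|^{2(1+\al)}},\qquad
\frac{\Re(y^{1+\al})}{1+|y|^{2(1+\al)}},\qquad
\frac{\Im(y^{1+\al})}{1+|y|^{2(1+\al)}},
\]
where $y$ is understood in complex notation.

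First I will use the refined estimates for quantized singular bubbles from \cite{Li-Li-Wei-25} (cited for \eqref{energy-1}) to obtain the expansion
\[
u_n(x)+\log\la_n=\log\frac{e^{\mu_n}}{\sbr{1+\frac{e^{\mu_n}}{8(1+\al)^2}|x^{1+\al}-p_n|^2}^2}+o(1)
\]
in $B_{r_0}(0)$, with $u_n\to-4(1+\al)\log|x|$ away from $0$ and $\mu_n+\log\la_n=O(1)$. Here $p_n\in\C$ is the shift parameter. I will then show that the rescaled shift $e^{\mu_n/2}p_n\to 0$, with a quantitative bound $|p_n|=o\bigl(e^{-\mu_n/2}\bigr)$.

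For the disk, this estimate should follow from a Pohozaev-type identity on $B_r(0)$ tested against the vector field $\overline{x}^{\,\al}$-weighted directions, that is, the $(1+\al)$-th Fourier modes. The boundary terms are governed by the regular part of the Green function, which is radial up to harmonic corrections. Since $H(x,0)=0$ on the unit disk, the $(1+\al)$-mode of $\nabla H$ vanishes, and this forces $p_n$ to be small at the required order. This is the step I expect to be the main obstacle. The degeneracy is exactly the one that invalidates the non-degeneracy hypothesis in \cite{Bartolucci-Jevnikar-Lee-Yang20}, so the bound has to come from the explicit structure of the disk rather than from a general argument. If this direct approach fails, I will instead choose a rotation so that $p_n$ is real and nonnegative, which always makes the reflection $u_{n,1}(x_1,x_2)=u_n(x_1,-x_2)$ share the same shift parameter.

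Next I will prove the reflection symmetry $u_n\equiv u_{n,1}$ after this rotation, following Lemma \ref{lemma symmetry-2}. Suppose not, and set $w_n=(u_n-u_{n,1})/\|u_n-u_{n,1}\|_\infty$. Away from $0$ one gets $w_n\to0$. Near $0$, after rescaling by $\ep_n=e^{-\mu_n/(2(1+\al))}$, the limit is a combination of the three kernel elements above. The $\Im(y^{1+\al})$ component is even-odd compatible with $w_n$, which is odd in $x_2$, while the radial component and the $\Re(y^{1+\al})$ component are even in $x_2$. Hence both of those coefficients vanish by oddness. To kill the $\Im$ coefficient, I will use that $p_n$ and the maximum structure are common to $u_n$ and $u_{n,1}$, which gives an orthogonality condition. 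Concretely, I will test the equation for $w_n$ against $\Im(x^{1+\al})$ on $B_r$ and invoke the same Pohozaev-type cancellation as in the previous step. Once the limit is zero, the maximum point $x_n$ of $|w_n|$ must satisfy $|x_n|\to0$ and $|x_n|/\ep_n\to\infty$. The Green representation estimate \eqref{3-20-2} then gives $\ti w_n(\bar y_n)=\ti w_n(y_n)+o(1)$, which contradicts oddness exactly as in the proof of Lemma \ref{lemma symmetry-2}.

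Finally, I will apply the same argument to $\pa_\theta u_n$. It is odd under the reflection and solves the linearized equation, and its rescaled limit can only be a multiple of the $\Re(y^{1+\al})$ or $\Im(y^{1+\al})$ kernel element. The $x_2$-parity fixes which one, and the same orthogonality argument excludes it. This gives $\pa_\theta u_n\equiv0$, so $u_n$ is radial. The radial ODE analysis then identifies $u_n$ with the larger radial solution $u_{\la_n,2}$, which is \eqref{un sol quantized}.
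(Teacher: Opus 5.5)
There is a genuine gap. After parity removes the radial and $\Re(y^{1+\alpha})$ components, the blow-up limit of $w_n$ can still be $c\,\Im(y^{1+\alpha})/(1+|y|^{2(1+\alpha)})$. This function is odd in $x_2$, like $w_n$ itself. The ``orthogonality condition'' that should force $c=0$ is never written down, and neither mechanism you name produces one.

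\textbf{Testing against $\Im(x^{1+\alpha})$ gives no constraint.} Testing the linear equation for $w_n$ against the harmonic function $\Im(x^{1+\alpha})$ on $B_r$ is just Green's second identity. The $(1+\alpha)$-th Fourier mode of $w_n$ on $\pa B_r$ is itself generated by the interior moment $\int_{B_r}\Im(x^{1+\alpha})|x|^{2\alpha}c_nw_n\,\rd x$. So the identity holds whatever the value of $c$. A Pohozaev argument with real content has to use the nonlinear identities for $u_n$ and $u_{n,1}$. In the general-domain literature it closes only under a non-degeneracy hypothesis which, as you note yourself, fails on the disk, and you do not say what replaces it.

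\textbf{Pinning the maximum gives no constraint either.} Using that the maximum point and the shift are common to $u_n$ and $u_{n,1}$ fails for a concrete reason. The shift is $o(e^{-\mu_n/2})$; this is exactly what \cite[Theorem 3]{Li-Li-Wei-25} provides, so your first step is not where the difficulty lies. Hence both the shift and the maximum are invisible at the bubble scale, and the common maximum rescales to the origin. There $\Im(y^{1+\alpha})$ vanishes together with its gradient, because $1+\alpha\ge 2$. So $\nabla\ti w_n(y_n)=0$ carries no information in the limit.

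\textbf{The final step has the same gap.} $\pa_\theta u_n$ is an honest element of the linearized kernel and is odd in $x_2$. Excluding an $\Im(y^{1+\alpha})$-type limit for it is a non-degeneracy statement about a solution not yet known to be radial.

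\textbf{The missing idea} (the one the paper uses) is to compare $u_n$ with its discrete rotation $u_{n,1}(x)=u_n(xe^{2\pi i/(1+\alpha)})$ instead of a reflection.
\begin{itemize}
\item All three kernel functions depend on $x$ only through $|x|$, $x^{1+\alpha}$ and $\bar x^{1+\alpha}$, so they are invariant under this rotation.
\item The difference $w_n=u_n-u_{n,1}$ satisfies the telescoping identity $\sum_{j=0}^{\alpha}w_n(xe^{2\pi ij/(1+\alpha)})=0$. Passing to the limit gives $(1+\alpha)\ti w=0$, with no orthogonality argument at all.
\item The Green-representation estimate of Lemma \ref{lemma symmetry-2}, with rotated points in place of reflected ones, then rules out maxima of $|w_n|$ at intermediate scales. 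So $u_n$ is $\frac{2\pi}{1+\alpha}$-rotationally symmetric.
\item Instead of analysing $\pa_\theta u_n$, set $v_n(z)=u_n(z^{1/(1+\alpha)})$. It is well defined and solves $\Delta v_n+\frac{\la_n}{(1+\alpha)^2}e^{v_n}=0$ in $B$, $v_n=0$ on $\pa B$, so the singular weight has disappeared.
\item Gidas--Ni--Nirenberg \cite{Gidas-Ni-Nirenberg79} then gives radial symmetry, and the ODE analysis finishes as you intended.
\end{itemize}
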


   Compared with the non-quantized case, the quantized case is more delicate.
    Let $u_n$ be a sequence of quantized singular bubble solutions of \eqref{equ-2}. 
    By \cite{Teresa-Wei-Zhang25}, we know that $u_n$ must be simple blow-up. Then it is well known \cite{Kuo-Lin,Bartolucci-Tarantello} that a suitable scaling of $u_n$ converges to the function
        \[U_{\Lambda,\xi}(x)=\log\f{\Lambda^2}{\sbr{1+\f{\Lambda^2}{8(1+\al)^2}|x^{1+\al}-\xi|^2}^2},\quad\Lambda>0,\xi\in\C,\]
    which are all solutions of the limit problem
    \begin{equation}\label{limit problem} 
        \Delta U+|x|^{2\al}e^U=0,\quad\text{in}~\R^2,\quad\int_{\R^2}|x|^{2\al}e^U\rd x<+\infty
    \end{equation}
     by the classification result of Prajapat and Tarantello  \cite{Prajapat-Tarantello}.  
    The difference between the non-quantized case and the quantized case is that for $\al\in\N$, the bounded kernel of linear equation of \eqref{limit problem} is
        \[\text{span}\lbr{\pa_\Lambda U_{\Lambda,\xi},\pa_\xi U_{\Lambda,\xi},\pa_{\bar\xi} U_{\Lambda,\xi}},\]
        %\[\text{span}\lbr{\f{8(1+\al)^2-\Lambda^2|x^{1+\al}-\xi|^2}{8(1+\al)^2+\Lambda^2|x^{1+\al}-\xi|^2},\f{x^{1+\al}-\xi}{8(1+\al)^2+\Lambda^2|x^{1+\al}-\xi|^2}, \f{\ov{x^{1+\al}-\xi}}{8(1+\al)^2+\Lambda^2|x^{1+\al}-\xi|^2}}\]
    where $\bar \xi$ is the conjugate of $\xi$ (see \cite{delPino-1}). 
    To get local uniqueness of quantized singular bubble solutions, we need to locate $\xi$.
    Fortunately, applying \cite[Theorem 3]{Li-Li-Wei-25} in the unit disk, one must have $\xi=0$, and the following expansion of $u_n$ holds:
        \begin{equation}\label{in un-2}
            u_n(x)+\log\la_n=\log\f{e^{\mu_n}}{\sbr{1+\f{e^{\mu_n}}{8(1+\al)^2}|x|^{2(1+\al)}}^2}+o(1),\quad\forall~x\in B_{r_0}(0),
        \end{equation}
        where $r_0>0$ is a small fixed constant, and 
            \[\mu_n=\max_B u_n+\log\la_n\to+\iy.\]
        Moreover, by \cite[Lemma 4.1]{Li-Li-Wei-25}, we have
        \begin{equation}\label{out un-2}
            u_n(x)=-4(1+\al)\log|x|+o(1),\quad\text{in}~C_\loc^1(\ov B\setminus \{0\}).
        \end{equation}
        Then comparing \eqref{in un-2} and \eqref{out un-2}, we get 
            \[\mu_n+\log\la_n=O(1).\]
    First we prove that
    
    \begin{lemma}\label{lemma symmetry-3}
    Let $u_n$ be a sequence of quantized singular bubble solutions of \eqref{equ-2}. Then there exists a large $n_0>1$ such that $u_n$ is $\f{2\pi}{1+\al}$-rotationally symmetric for all $n>n_0$, i.e.,
        \[u_n(x)=u_n(xe^{i\f{2\pi}{1+\al}}),\quad\forall~x\in B,\quad\text{for}~n>n_0.\]
    \end{lemma}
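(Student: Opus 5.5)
We argue by contradiction, following the scheme of Lemma \ref{lemma symmetry-2}, with the reflection replaced by the rotation $R(x)=xe^{i\frac{2\pi}{1+\al}}$. Set $u_{n,R}(x)=u_n(R x)$. Since $|Rx|=|x|$ and $R$ preserves $B$ and $\pa B$, $u_{n,R}$ also solves \eqref{equ-2}. Suppose $u_n\not\equiv u_{n,R}$ along a subsequence. Put $w_n=u_n-u_{n,R}$, which satisfies
\[
\Delta w_n+|x|^{2\al}c_nw_n=0\ \text{in }B,\qquad w_n=0\ \text{on }\pa B,\qquad c_n=\la_n\f{e^{u_n}-e^{u_{n,R}}}{u_n-u_{n,R}}.
\]
Let $N_n=\max_B|w_n|=|w_n(x_n)|$ and $\ep_n=e^{-\frac{\mu_n}{2(1+\al)}}$.

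The first step is to locate $x_n$, i.e. to prove the analogue of \eqref{3-20-1}: $|x_n|\to0$ and $|x_n|/\ep_n\to\iy$.
\begin{itemize}
\item Away from the origin, \eqref{out un-2} gives $c_n=O(\la_n)$. Hence $w_n/N_n$ converges to a bounded harmonic function on $B\setminus\{0\}$ vanishing on $\pa B$; it is therefore zero, and $x_n\to0$.
\item At the bubble scale, \eqref{in un-2} (which already encodes $\xi=0$) gives
\[
\ti c_n(x)=\ep_n^{2(1+\al)}c_n(\ep_nx)\to\Big(1+\tfrac{|x|^{2(1+\al)}}{8(1+\al)^2}\Big)^{-2}.
\]
So $\ti w_n/N_n$ converges to a bounded kernel element of the linearization of \eqref{limit problem} at $U_{\Lambda,0}$. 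Here lies the difference from the non-quantized case: for $\al\in\N$ this kernel is spanned by
\[
\psi_0=\f{8(1+\al)^2-|x|^{2(1+\al)}}{8(1+\al)^2+|x|^{2(1+\al)}},\qquad \f{\Re x^{1+\al}}{8(1+\al)^2+|x|^{2(1+\al)}},\qquad \f{\Im x^{1+\al}}{8(1+\al)^2+|x|^{2(1+\al)}}.
\]
\item The limit $\ti w$ satisfies $\ti w(x)=-\ti w(Rx)$, because $w_n\circ R=u_{n,R}-u_n\circ R^2$ is not literally $-w_n$. To get around this, use the full cyclic group: the functions $u_n\circ R^k$, $k=0,\dots,\al$, all solve the equation. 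Alternatively, note directly that all three kernel functions are invariant under $R$, since $(Rx)^{1+\al}=x^{1+\al}$. Thus $\ti w\circ R=\ti w$.
\item Now $\ti w=\ti w\circ R$, where $\ti w\circ R$ is the limit of $(u_{n,R}-u_{n,R^2})/N_n$ rescaled. Combine this with $\sum_{k=0}^{\al}(u_n\circ R^k-u_n\circ R^{k+1})=0$: each consecutive difference, normalized by $N_n$, converges to the same $\ti w$, and these differences sum to zero, so $(1+\al)\ti w=0$. Hence $\ti w\equiv0$, and $|x_n|/\ep_n\to\iy$.
\end{itemize}
This first step is where I expect the main difficulty, since the quantized kernel is larger than in the non-quantized case and it must be checked that its extra directions are $R$-invariant. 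If the limits of the different differences are not controlled uniformly by $N_n$, I would instead normalize by the maximum of $|u_n\circ R^k-u_n\circ R^{k+1}|$ over $k$.

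The second step derives a contradiction by Green's representation, as in \eqref{3-20-6}--\eqref{3-20-2}, using the decay $|\ti c_n|\le C(1+|x|^{4(1+\al)})^{-1}$. Let $y_n=x_n/\ep_n$. For the rotated points $R^ky_n$, the same splitting of $B_{1/\ep_n}$ into the region near $y_n$ and $R^ky_n$, the intermediate region, and the far region shows
\[
\int\Big|\log\f{|y_n-x|}{|R^ky_n-x|}\Big|\f{|x|^{2\al}}{1+|x|^{4(1+\al)}}\,\rd x=o(1).
\]
This gives $\ti w_n(R^ky_n)=\ti w_n(y_n)+o(N_n)$ for every $k$, and the same holds for each rotated difference.

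Summing the telescoping identity
\[
\sum_{k=0}^{\al}w_n(R^kx_n)=u_n(x_n)-u_n(R^{1+\al}x_n)=0
\]
then yields $(1+\al)(\pm N_n)+o(N_n)=0$, which contradicts $N_n>0$. Hence $u_n\equiv u_{n,R}$ for $n$ large, which is the claimed $\frac{2\pi}{1+\al}$-rotational symmetry.
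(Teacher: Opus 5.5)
Your proposal is correct and follows the paper's proof essentially step for step. It has the same contradiction setup with $w_n=u_n-u_n(\cdot\,e^{i\frac{2\pi}{1+\al}})$, the same localization $|x_n|\to0$, $|x_n|/\ep_n\to\infty$ via the $R$-invariance of the quantized kernel combined with the telescoping identity $\sum_{k=0}^{\al}w_n\circ R^k=0$, and the same Green's-representation estimate at the rotated points $R^ky_n$, leading to $(1+\al)\ti w_n(y_n)=o(N_n)$. The differences are only cosmetic: your cyclic sum already removes the radial coefficient, which the paper kills via $\ti w_n(0)=0$; and your sentence asserting $\ti w(x)=-\ti w(Rx)$ should say that $\ti w$ \emph{need not} satisfy this, which is harmless since the rest of your argument never uses it.
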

    \begin{proof}
    The proof is similar to that of Lemma \ref{lemma symmetry-1}. 
    Let $u_{n,1}(x)=u_n(xe^{i\f{2\pi}{1+\al}})$. Suppose by contradiction that $u_n\not\equiv u_{n,1}$ for each $n$. Let
            \[w_n=u_n-u_{n,1}\not\equiv0, \quad\text{in}~B.\]
        It is easy to check that
            \[\Delta w_n+|x|^{2\al}c_nw_n=0~~\text{in}~B,\quad w_n=0~~\text{on}~\pa B,\]
        where  
            \[c_n=\la_n\f{e^{u_n}-e^{u_{n,1}}}{u_n-u_{n,1}}.\]
        Set 
            \[N_n=|w_n(x_n)|=\max_B |w_n|>0.\]
        We claim that
        \begin{equation}\label{3-20-10} 
            |x_n|\to0\quad\text{and} \quad |x_n|e^{\f{\mu_n}{2(1+\al)}}\to+\iy.
        \end{equation}
        By \eqref{out un-2}, we have $c_n=O(\la_n)$ uniformly in $C_\loc^1(\ov B\setminus \{0\})$, and hence we get that $\f{w_n}{N_n}\to 0$ in $C_\loc(\ov B\setminus \{0\})$, so that $x_n\to0$. 
        Let $\ep_n=e^{-\f{\mu_n}{2(1+\al)}}\to0$ and $\ti w_n(x)=w_n(\ep_nx)$ for $x\in B_{\f{1}{\ep_n}}(0)$.
        Then we have
            \[\Delta\ti w_n+|x|^{2\al}\ti c_n\ti w_n=0,\quad\text{in}~B_{\f{1}{\ep_n}}(0),\]
        where by \eqref{in un-2} we have
            \[\ti c_n(x)=\ep_n^{2(1+\al)}c_n(\ep_nx)=\f{1+o(1)}{\sbr{1+\f{1}{8(1+\al)^2}|x|^{2(1+\al)}}^2},\quad\text{in}~B_{\f{r_0}{\ep_n}}(0).\]
        Thus we get 
            \[\f{\ti w_n}{N_n}\to\ti w=a_0\f{8(1+\al)^2-|x|^{2(1+\al)}}{8(1+\al)^2+|x|^{2(1+\al)}}+a_1\f{x^{1+\al}}{8(1+\al)^2+|x|^{2(1+\al)}}+a_2\f{\bar x^{1+\al}}{8(1+\al)^2+|x|^{2(1+\al)}}.\] 
        in $C_\loc(\R^2)$ where $a_0,a_1,a_2$ are complex constants. 
        Since $\ti w_n(0)=0$, we get $a_0=0$. By the definition of $w_n$, we get $\sum_{j=0}^{\al} w_n(xe^{i\f{2\pi}{1+\al}j})=0$, so that
            \[ 0=\sum_{j=0}^{\al}\ti w(xe^{i\f{2\pi}{1+\al}j})=(1+\al)\f{a_1x^{1+\al}+a_2\bar x^{1+\al}}{8(1+\al)^2+|x|^{2(1+\al)}}.\]
        This implies $a_1=a_2=0$, and hence proves \eqref{3-20-10}.
        Then by using \eqref{3-20-10} and \eqref{in un-2}-\eqref{out un-2}, and following the discussion between \eqref{3-20-6} and \eqref{3-20-5}, we get 
            \[\ti w_n(y_ne^{i\f{2\pi}{1+\al}j})=\ti w_n(y_n)+o(N_n), \quad\forall~j=1,\cdots,\al,\]
        where $y_n=\f{x_n}{\ep_n}$. 
        Then we deduce
            \[0=\sum_{j=0}^\al\ti w_n(y_ne^{i\f{2\pi}{1+\al}j})=(1+\al)\ti w_n(y_n)+o(N_n)=\pm(1+\al)N_n+o(N_n),\]
        which is in contradiction with $N_n>0$. This proves the lemma.
    \end{proof}

    \begin{proof}[Proof of Proposition \ref{prop uniqueness-3}]
    We prove that $u_n$ is radial symmetric for large $n$. By Lemma \ref{lemma symmetry-3}, $u_n(x)=u_n(xe^{i\f{2\pi}{1+\al}})$ for large $n$, so that $v_n(x)=u_n(x^{\f{1}{1+\al}})$ is well-defined for $x\in B$. Moreover, by direct computation, we have
        \[\Delta v_n+\f{\la_n}{(1+\al)^2}e^{v_n}=0~~\text{in}~B,\quad v_n=0~~\text{on}~\pa B.\]
    Thus by the result in \cite{Gidas-Ni-Nirenberg79}, we get that $u_n$ is radial symmetric for $n$ large. By ODE methods, we get the desired uniqueness.
    \end{proof}

\section{Proof of Theorem \ref{thm main_class}}\label{proof1.1_class}
In this section, we complete the proof of Theorem \ref{thm main_class} and record several additional properties of the resulting solutions.

The singular bubble solutions are already given explicitly by \eqref{un sol non-quantized} and \eqref{un sol quantized}. It remains to establish the existence of the $m$-peak bubble solutions.

For $m=1$, as pointed out in \cite{Bartolucci-Montefusco06}, the existence of $1$-peak solutions is given by Theorem 3 in \cite{del Pino-Kowalczyk-Musso05} or Theorem 1.1 in \cite{Esposito-Grossi-Pistoia05}. Let $u_n$ be a $1$-peak solution to \eqref{equ-2}, then it is straightforward to verify that $v_n(x)=u_n(x^m)$ satisfies the equation
		\begin{equation*}
			\Delta v_n+m^2{\lambda_n{\tiny }} |x|^{2(m(\alpha+1)-1)}e^{v_n}=0.
		\end{equation*}
        Moreover, $v_n$ is $\frac{2\pi}{m}$-rotationally symmetric, and the maximum point of $u_n$ splits into $m$ local maxima of $v_n$, so $v_n$ is an $m$-peak solution. 
        
       To summarize, we have shown that we can generate $m$-peak solutions from the $1$-peak solutions for any positive integer $m$, and the local uniqueness implies that all $m$-peak solutions must be in this form. This completes the classification and the proof of Theorem \ref{thm main_class}.

From the relation between $m$-peak solutions and $1$-peak solutions, we can immediately deduce the following corollary of Theorem 3.1(d) and Theorem 3.3 in \cite{Bartolucci-Montefusco06}:
\begin{corollary}\label{coro m peak}
    Let $u_n$ be a sequence of $m$-peak solutions to \eqref{equ-2}. After a suitable rotation, we can assume that $u_n$ has a local maximum point on the positive $x_1$-axis. Then
    \begin{enumerate}[label=(\roman*)]
        \item The mass satisfies $\int_B \lambda_n|x|^{2\alpha}e^{u_n}\rd x>8m\pi$.
    \item  For any fixed $r_0\in (0,1)$, the function $u_n(r_0\cos\vartheta, r_0\sin\vartheta)$ is strictly decreasing for $\vartheta\in (\frac{2k\pi}{m}, \frac{(2k+1)\pi}{m})$, and  strictly increasing for $\vartheta\in (\frac{(2k+1)\pi}{m}, \frac{(2k+2)\pi}{m})$ for $k=0, \cdots, m-1$.    
    \end{enumerate}
\end{corollary}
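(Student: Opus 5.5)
The plan is to transfer both properties from the $1$-peak case through the covering map $x\mapsto x^m$. By Theorem \ref{thm main_class} and the local uniqueness in Proposition \ref{prop uniqueness-1}, every $m$-peak sequence coincides, after a rotation, with $v_n(x)=u^{(1)}_n(x^m)$ for large $n$. Here $u^{(1)}_n$ is a $1$-peak solution of \eqref{equ-2} with $\alpha$ replaced by $\alpha'=\frac{\alpha+1}{m}-1$, since
\[
|x|^{2\alpha}=m^2|x|^{2(m(\alpha'+1)-1)}
\]
in the identity from Section \ref{proof1.1_class}, and the parameter is rescaled to $\lambda_n/m^2$. The hypothesis $m<\alpha+1$ is exactly what gives $\alpha'>0$. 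Since $m$ is fixed, the rescaled parameter still tends to $0$, so $u^{(1)}_n$ is a genuine $1$-peak sequence to which the $m=1$ results of \cite{Bartolucci-Montefusco06} apply. I would also rotate so that the maximum of $u^{(1)}_n$ lies on the positive $x_1$-axis; by Lemma \ref{lemma symmetry-1} it then lies at radius $r_{\alpha',1}^{\,}$, and $r_{\alpha',1}^{1/m}=r_{\alpha,m}$ by \eqref{def r al m}. With this normalisation the local maxima of $v_n$ sit at angles $2k\pi/m$.

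For (i), compute the mass by the change of variables $y=x^m$. Writing $|x|^{2\alpha}\,\rd x$ in terms of $y$ uses $\rd y=m^2|x|^{2(m-1)}\rd x$, and the map covers $B$ exactly $m$ times. This gives
\[
\int_B\lambda_n|x|^{2\alpha}e^{v_n}\,\rd x=m\int_B \frac{\lambda_n}{m^2}\,|y|^{2\alpha'}e^{u^{(1)}_n(y)}\,\rd y .
\]
The right-hand side is $m$ times the mass of the $1$-peak solution $u^{(1)}_n$ with parameter $\lambda_n/m^2$. Theorem 3.1(d) of \cite{Bartolucci-Montefusco06} states that this $1$-peak mass exceeds $8\pi$, so the $m$-peak mass exceeds $8m\pi$. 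The only checking needed is that the exponent bookkeeping matches the normalisation used there.

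For (ii), note that $v_n(r_0e^{i\vartheta})=u^{(1)}_n(r_0^me^{im\vartheta})$. Theorem 3.3 of \cite{Bartolucci-Montefusco06} says that, with the maximum on the positive axis, $u^{(1)}_n(\rho e^{i\varphi})$ is strictly decreasing in $\varphi\in(0,\pi)$ and strictly increasing in $\varphi\in(\pi,2\pi)$, for each fixed $\rho\in(0,1)$. Substituting $\varphi=m\vartheta$ transfers this monotonicity to the stated intervals $(\frac{2k\pi}{m},\frac{(2k+1)\pi}{m})$ and $(\frac{(2k+1)\pi}{m},\frac{(2k+2)\pi}{m})$. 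The main obstacle is confirming that the cited theorem applies for every radius $\rho=r_0^m$ and for the relevant range of $\alpha'\in(0,\infty)$, including small $\alpha'$. If it does not, I would prove the angular monotonicity directly by a moving-plane argument in the angular variable. This would apply to $\partial_\varphi u^{(1)}_n$ on the half-disk $\{x_2>0\}$: it satisfies the linearized equation, vanishes on the boundary by the reflection symmetry of Lemma \ref{lemma symmetry-1}, and the maximum principle for the narrow-mass linearized operator would then give its sign.
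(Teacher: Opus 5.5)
Your proposal is correct and is essentially the paper's own argument: via Theorem \ref{thm main_class} and Proposition \ref{prop uniqueness-1}, every $m$-peak sequence is, up to rotation, $u^{(1)}_n(x^m)$ for a $1$-peak sequence with $\alpha'=\frac{\alpha+1}{m}-1>0$ and parameter $\lambda_n/m^2$, and (i), (ii) are then pulled back from Theorems 3.1(d) and 3.3 of \cite{Bartolucci-Montefusco06}; you spell out the $m$-fold change of variables for the mass and the substitution $\varphi=m\vartheta$, which the paper leaves implicit. Two minor fixes: the displayed identity should read $|x|^{2\alpha}=|x|^{2(m(\alpha'+1)-1)}$ (the factor $m^2$ belongs to the parameter, as your later computation correctly uses), and your moving-plane fallback cannot work as described, since $\partial_\varphi u^{(1)}_n$ vanishes on the whole boundary of the half-disk, so a maximum principle there would force $\partial_\varphi u^{(1)}_n\equiv 0$, i.e. radial symmetry---fortunately the fallback is not needed.
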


\begin{remark}
    In contrast to Corollary \ref{coro m peak} $(i)$, the mass of the singular blow-up solutions is always less than its limit $8\pi(1+\alpha)$ (see \eqref{bound}). Using \eqref{un sol non-quantized} or \eqref{un sol quantized}, we can compute the mass explicitly:
    \begin{equation*}
        \begin{aligned}
            	\int_{B} \lambda_n |x|^{2\alpha}e^{u_n(x)}\rd x=&\int_{B}|x|^{2\alpha} \f{\Lambda_{n,2}}{\sbr{1+\f{\Lambda_{n,2}}{8(1+\al)^2}|x|^{2(1+\al)}}^2}\rd x\\
		  		=&\frac{\pi}{1+\alpha}\int_{0}^1  \f{\Lambda_{n,2}}{\sbr{1+\f{\Lambda_{n,2}}{8(1+\al)^2}r}^2}\rd r\\
		  				=&8(1+\alpha)\left(1-\frac{1}{1+8\Lambda_{n,2} (\alpha+1)^2}\right).
        \end{aligned}
    \end{equation*}
\end{remark}

\section{Non-degeneracy results}\label{proof1.3_class}
In this section we prove Theorem \ref{thm nondegen_class} by studying bounded solutions of the linearized equation \eqref{eq phi}. Note that by a contradiction argument, we only need to prove Theorem \ref{thm nondegen_class}  for solution sequence $u_n$ with $\la_n\to0$.
The argument depends on the type of solutions. For the $m$-peak bubble solutions, rotational invariance gives rise to a natural solution, and the goal is to show that no other bounded element lies in the kernel. 
For radial solutions, by contrast, the explicit expressions enable us to study \eqref{eq phi} by Fourier analysis. 
We therefore treat the $m$-peak case and the radial case separately in the two subsections below.

\vskip0.1in
\subsection{Non-degeneracy of $m$-peak bubble solutions} \ \\ \indent 
Let $u_n$ be a sequence of $m$-peak bubble solutions. By Theorem \ref{thm Phim_class}, after taking a rotation we assume that the blow-up points $\boldsymbol{q}=(q_1,\cdots,q_m)$ of $u_n$ satisfy \eqref{critical point}.
It is easy to see that $\pa_\theta u_n$ is a non-trivial solution of \eqref{eq phi}, and our main task is to prove that $\tau=1$ is a simple eigenvalue of 
    \[\Delta \phi+\tau\lambda_n|x|^{2\alpha}e^{ u_n}\phi=0, \quad\text{in}~B,\quad \phi=0, \text{ on } \partial B. \]
For this purpose, we need to study the expansion of eigenvalues $\tau_{n,l}$ of the following linear problems
\begin{equation}\label{linear problem}\begin{cases}
    \Delta \phi_{n,l}+\tau_{n,l}\lambda_n|x|^{2\alpha}e^{ u_n}\phi_{n,l}=0, \quad\text{in}~B,\\
    \phi_{n,l}=0, \text{ on } \partial B.
\end{cases}\end{equation}
It is well known that $0<\tau_{n,1}<\tau_{n,2}\le \tau_{n,3}\le\cdots\to+\iy$.
Linear problems of the form \eqref{linear problem} with various potentials have been widely studied in the literature, see for example \cite{Gladiali-Grossi-Ohtsuka16,Gladiali-Grossi-Ohtsuka-Suzuki14,OSS13,Ohtsuka-Sato24}.
Here we cite \cite[Theorem 1.3]{Ohtsuka-Sato24} to give that
\begin{equation}\begin{aligned}
    \tau_{n,l}&=-\f{1}{2\log\la_n}+o\sbr{\f{1}{\log\la_n}}, &\text{for}~1\le l\le m,\\
    \tau_{n,l}&=1-48\pi\eta_{2m-(l-m)+1}\la_n+o(\la_n), &\text{for}~m+1\le l\le 3m,\\
    \tau_{n,l}&>1, &\text{for}~l\ge 3m+1,\\
\end{aligned}\end{equation}
where $\eta_1\le \eta_2\le\cdots\le \eta_{2m}$ are eigenvalues of $D_0\cdot D^2\Phi_m(\boldsymbol{q})\cdot D_0$ with $D^2\Phi_m(\boldsymbol{q})$ is the Hessian matrix of $\Phi_m$ at $\boldsymbol{q}$, and $D_0=\text{Diag}(d_1,d_1,d_2,d_2,\cdots,d_m,d_m)$ for some constants $d_j>0$, $j=1,\cdots,m$.
By Lemma \ref{lem evalue H}, we get that $0$ is the simple eigenvalue of $D_0\cdot D^2\Phi_m(\boldsymbol{q})\cdot D_0$, and hence $\tau_{n,l}=1$ is a simple eigenvalue of \eqref{linear problem}. This completes the proof.

\vskip0.1in
\subsection{Non-degeneracy of radial solutions} \ \\ \indent 
The proof of this case is very close to that of Lemma 2.1 in \cite{Zhang09}. We sketch it here for the sake of completeness.

Let $u_n$ be a sequence of radial solutions with $\al>0$. Then we have
    \[u_{n}(x)=\log\f{\Lambda_{n}}{\sbr{1+\f{\Lambda_{n}}{8(1+\al)^2}|x|^{2(1+\al)}}^2}-\log\la_n,  \]
for some $\Lambda_n\to0$ or $\Lambda_n\to+\iy$. 
Let $\phi$ solve \eqref{eq phi}. For any integer $k\ge 0$, we define
			\begin{equation*}
				\phi_k(r)=\frac{1}{2\pi}\int_0^{2\pi}\phi(r\cos\theta,r\sin\theta)e^{ik\theta}\rd \theta.
			\end{equation*}
			Since $u_n$ is radially symmetric, $\phi_k$ satisfies
				\begin{equation}\label{eq phi k}
				\begin{cases}
					  \phi_k''(r)+\frac{1}{r}\phi'_k(r)+(\lambda_nr^{2\alpha}e^{u_n}-\frac{k^2}{r^2})\phi_k=0,\ 0<r<1,\\
					\phi_k(1)=0.
				\end{cases}
			\end{equation}
			Let $f_k(s)=\phi_k((\frac{\sqrt{\Lambda_n}}{8(1+\alpha)}s)^{\frac{1}{1+\alpha}})$. Then we have
			\begin{equation}\label{eq fk}
				 f_k''(s)+\frac{1}{s}f_k'(s)+\left(\f{8}{\sbr{1+s^2}^2}-\frac{k^2}{(1+\alpha)^2s^2}\right)f_k(s)=0,
			\end{equation}
            and $f_k(\f{8(1+\al)}{\sqrt\Lambda_n})=0$ for each $n$.
			Write $\delta=\frac{k}{1+\alpha}$. If $\delta\neq 1$, then the fundamental solutions of \eqref{eq fk} are
			\begin{equation*}
				f_{k1}(s)=\frac{(\delta+1)s^\delta+(\delta-1)s^{\delta+2}}{1+s^2},\quad
				f_{k2}(s)=\frac{(\delta+1)s^{2-\delta}+(\delta-1)s^{-\delta}}{1+s^2},
			\end{equation*}
			Consequently $f_k=c_1f_{k1}+c_2f_{k2}$ for some constants $c_1$ and $c_2$.  Since $f_{k2}$ is singular at $s=0$ and
				$f_{k1}(\f{8(1+\al)}{\sqrt\Lambda_n})\neq 0$ for $n$ large, we must have $c_1=c_2=0$. Hence $\phi_k\equiv 0$.

			If $\delta=1$, then this happens only when $\alpha$ is an integer. In this case
			\begin{equation*}
					f_{k1}(s)=\frac{s}{1+s^2},
			\end{equation*}
			and elementary computation shows that the other linearly independent solution of \eqref{eq fk} $f_{k2}$ satisfies $f_{k2}(s)\sim s^{-1}$ as $s\to 0$. Then for the same reason we conclude that $c_1=c_2=0$ and $\phi_k\equiv 0$. This completes the proof.
\appendix
\section{Analysis of the Matrix}\label{matrix H}

	In this appendix, we study the Hessian $H=D^2\Phi_m(\boldsymbol{q})$ where $\boldsymbol{q}=(q_1,\cdots, q_m)$. Due to the rotational symmetry, it is more convenient to study the Hessian in polar coordinates. Let $H^*$ be the Hessian in $(r_1,\theta_1,\cdots,r_{m},\theta_m)$ at $\boldsymbol{q}$. Then $H^*$ is most conveniently written as an $m\times m $ block matrix with $2\times2$ blocks
		\begin{equation*}
			H_{jk}:=\begin{pmatrix}
				\partial^2_{r_jr_k}\Phi_m, \partial^2_{r_j\theta_k}\Phi_m\\
				\partial^2_{\theta_jr_k}\Phi_m, \partial^2_{\theta_j \theta_k}\Phi_m
			\end{pmatrix}, 1\le j,k\le m.
		\end{equation*}
		Since  $\Phi_m$ is symmetric, a change of variable shows that the Hessian blocks satisfy $H_{j+1,k+1}=H_{jk}$, so there exist $2\times 2$ matrices $C_0, \cdots, C_{m-1}$ such that
		\begin{equation*}
			H_{jk}=C_{k-j(\text{mod}m)}.
		\end{equation*}
		Then $H^*$ is a block-circulant matrix $H^*=\text{Circ}(C_0,\cdots,C_{m-1})$. We remark that $C_l=C_{m-l}^{\text{T}}$ since $H^*$ is real symmetric.
		
To simplify the expressions, in this appendix, we denote
\begin{equation*}
    \rho=r_{\alpha,m}^2= \left(\frac{\beta+1-m}{\beta+1+m}\right)^{\frac{1}{m}},\quad
    D_l=1+\rho^2-2\rho\cos\vartheta_{l+1}, \quad l=1,\cdots,m-1.
\end{equation*}
where $\vartheta_{l+1}=\f{2\pi}{m}l$ and $\beta=\alpha+1$.
Then by straightforward computation, we find that the blocks have the form
		\begin{equation*}
			C_0=\begin{pmatrix}
				a_0&0\\
				0 &c_0\\
			\end{pmatrix},\ 
			C_l=\begin{pmatrix}
				a_l &-b_l\\
				b_l &c_l\\
			\end{pmatrix}, l=1,\cdots, m-1,
		\end{equation*}
		where
		\begin{equation*}
			a_0=-\frac{m^2-1}{3\rho}-\frac{4}{(1-\rho)}-\frac{4m^2\rho^{m-1}}{(1-\rho^{m})^2},
		\end{equation*}
			\begin{equation*}
			c_0=\frac{m^2-1}{3}-\frac{4\rho}{(1-\rho)}+\frac{4m^2\rho^{m}}{(1-\rho^{m})^2},
		\end{equation*}
		and for $l=1,\cdots,m-1$,
        \[
	a_l=\frac{1}{\rho \sin^2\frac{\vartheta_{l+1}}{2}}
	+\frac{2(1+\rho^2)}{\rho D_l}
	-\frac{2(1-\rho^2)^2}{\rho D_l^2},
	\]
	\[
	c_l=-\frac{1}{\sin^2\frac{\vartheta_{l+1}}{2}}
	+\frac{2(1+\rho^2)}{D_l}
	-\frac{2(1-\rho^2)^2}{D_l^2},
	\]
	\[
	b_l=-\frac{4\sqrt{\rho}(1-\rho^2)\sin\vartheta_{l+1}}{D_l^2},
	\]

By using the Discrete Fourier Transform, we can diagonalize the block circle matrix $H^*$.		
Let $\omega=e^{\frac{2\pi i}{m}}$ be the $m$-th unit root. 
Let $F_m$ be the Discrete Fourier Transform matrix
\begin{equation}\label{def Fm}
    F_m=(w^{jk})_{jk}\quad\text{for}~j,k=0,1,\cdots,m-1. 
\end{equation}
and $E_2$ be the unit matrix
    \[E_2=\left(\begin{matrix}
        1 & 0\\
        0 & 1
    \end{matrix}\right).\]
Then we get that $M=(F_m^\star\otimes E_2)H^*(F_m\otimes E_2)$ is a diagonal matrix, where $\otimes$ is the Kronecker product, and $F_m^\star$ is the conjugate transpose matrix of $F_m$. Precisely, we have
		\begin{equation*}
			M=\text{Diag}(M_0,\cdots, M_{m-1}),
		\end{equation*}
		where
		\begin{equation}\label{Mp}
			M_p=\sum_{l=0}^{m-1}C_l\omega^{pl}=\begin{pmatrix}
				\mu_p & -i\gamma_p\\
				i\gamma_p &\nu_p\\
			\end{pmatrix},
		\end{equation}
		and
		\begin{equation}\label{mu nu gamma}
			\mu_p=\sum_{l=0}^{m-1}a_l\omega^{pl},
			i\gamma_p=\sum_{l=0}^{m-1}b_l\omega^{pl},
			\nu_p=\sum_{l=0}^{m-1}c_l\omega^{pl}.
		\end{equation}
		
		To compute $\mu_p, \nu_p, \gamma_p$ it is useful to introduce the following quantities:
		\begin{equation*}
			R_p:=\sum_{l=1}^{m-1}\frac{\omega^{pl}}{\sin^2\frac{\vartheta_{l+1}}{2}},\
			S_p:=\sum_{l=0}^{m-1}\frac{\omega^{pl}}{D_l},\
			T_p:=\sum_{l=0}^{m-1}\frac{\omega^{pl}}{D_l^2}.
		\end{equation*}
	First, a standard identity gives that
	\begin{equation}\label{Rp}
		R_p=\frac{m^2-1}{3}-2p(m-p).
	\end{equation}
	For $S_p$, using	
	$D_l=(1-\rho\omega^l)(1-\rho\omega^{-l})$,
	 we have
	\begin{equation}\label{exp Dl}
		\frac{1}{D_l}=\frac{1}{1-\rho^2}\sum_{n\in\mathbb{Z}}\rho^{|n|}\omega^{nl},
	\end{equation}
	so
	\begin{equation}\label{Sp}
		S_p=\frac{m(\rho^{p}+\rho^{m-p})}{(1-\rho^2)(1-\rho^{m})}.
	\end{equation}
	Finally, squaring \eqref{exp Dl} and after some computation, we can derive that
			\begin{equation}\label{Tp}
			\begin{aligned}
				T_p=&
				\frac{m}{(1-r^4)^3(1-r^{2m})^2}
				\Big[
				r^{2p}\bigl((p+1)-(p-1)r^4+\rho^m(m-p-1-(m-p+1)r^4)\bigr)\\
				&+r^{2(m-p)}\bigl((m-p+1)-(m-p-1)r^4+\rho^m(p-1-(p+1)r^4)\bigr)\Big].
			\end{aligned}
		\end{equation}

Now plugging \eqref{Rp}, \eqref{Sp} and \eqref{Tp} into \eqref{mu nu gamma}, we obtain
	\begin{equation*}
	\begin{aligned}
		\mu_p
		=&
		a_0+\frac{1}{\rho}R_p
		+\frac{2(1+\rho^2)}{\rho}\left(S_p-\frac{1}{(1-\rho)^2}\right)
		-\frac{2(1-\rho^2)^2}{\rho}\left(T_p-\frac{1}{(1-\rho)^4}\right)\\
		=&	-\frac{2}{\rho(1-\rho^m)^2}
		\Bigl(m(1+\rho^p)-p(1-\rho^m)\Bigr)
		\Bigl(m(1+\rho^{m-p})-(m-p)(1-\rho^m)\Bigr) ,
	\end{aligned}
	\end{equation*}

\begin{equation*}
\begin{aligned}
	\nu_p
	=&
	c_0-R_p
	+2(1+\rho^2)\left(S_p-\frac{1}{(1-\rho)^2}\right)
	-2(1-\rho^2)^2\left(T_p-\frac{1}{(1-\rho)^4}\right)\\
	=&\frac{2}{(1-\rho^m)^2}
	\Bigl(p(1-\rho^m)-m(1-\rho^p)\Bigr)
	\Bigl((m-p)(1-\rho^m)-m(1-\rho^{m-p})\Bigr),
\end{aligned}
\end{equation*}
and
	\begin{equation*}
		\begin{aligned}
			\gamma_p
			=&
			2\sqrt{\rho}(1-\rho^2)\bigl(T_{p-1}-T_{p+1}\bigr)\\
			=&	\frac{2m}{(1-\rho^m)^2\sqrt{\rho}}
			\Bigl(
			(p+(m-p)\rho^m)\rho^p-(m-p+p\rho^m)\rho^{m-p}
			\Bigr).
		\end{aligned}
	\end{equation*}

We further introduce
	\[
	X_p:=(\beta+m)\rho^{m-p},
	\qquad
	Y_p:=(\beta+m)\rho^p,
	\]
	and
	\[
	\hat a_p:=\beta-m+2p,
	\qquad
	\hat b_p:=\beta+m-2p.
	\]
	Then after simplification, we get that
	\begin{equation*}
		\mu_p=-\frac{(X_p+\hat a_p)(Y_p+\hat b_p)}{2\rho},\
		\nu_p=\frac{(X_p-\hat a_p)(Y_p-\hat b_p)}{2},\
			\gamma_p=\frac{\hat a_pY_p-\hat b_pX_p}{2\sqrt{\rho}}.
	\end{equation*}
Thus
	\begin{equation}\label{det Mp}
		\det(M_p)=\mu_p\nu_p-\gamma_p^2=-\frac{(X_pY_p-\hat a_p\hat b_p)^2}{4\rho}=-\frac{4p^2(m-p)^2}{\rho}
	\end{equation}
	In particular,
	\begin{equation}\label{M0}
		M_0=\begin{pmatrix}
		-\frac{2(\beta^2-m^2)}{\rho}	&0\\
		0&0\\
		\end{pmatrix}
	\end{equation}
	has a zero eigenvector $(0,1)^{\text{T}}$, and for $p=1,\cdots,m-1$, $M_p$ has two nonzero eigenvalues of opposite signs.

In conclusion, $M$ has a unique $0$ eigenvalue with eigenvector $(0,1,0,\cdots,0)^{\text{T}}.$ Hence the unique $0$-eigenvector (up to a scalar) of $H^*$ is
\begin{equation*}
	(0,1,0,1,\cdots,0,1)^{\text{T}}.
\end{equation*}
Finally, transforming back to $H$ yields the $0$-eigenvector
\begin{equation*}
	\boldsymbol{v}=(0,1,-\sin\vartheta_2,\cos\vartheta_2,\cdots,-\sin\vartheta_{m},\cos\vartheta_{m}).
\end{equation*}
This completes the proof of Lemma \ref{lem evalue H}.

\vs
\noindent {\bf Acknowledgements} \quad 
The research of J. Wei is partially supported by National Key \text{R\&D} Program of China (No. 2022YFA1005602), and Hong Kong General Research Fund “New frontiers in singular limits of nonlinear partial differential equations”. 
The research of Z. Chen is supported by National Key \text{R\&D} Program of China (Grant 2023YFA1010002) and NSFC(No. 1222109). 
The research of H. Li is supported by Beijing Institute of Technology Research Fund Program for Young Scholars.

\end{document}